    \newtheorem{Lem}{Lemma}[section]
    \newtheorem{Lem-Def}[Lem]{Lemma-Definition}
		\newtheorem{CorDef}[Lem]{Corollary-Definition}
    \newtheorem{Prop}[Lem]{Proposition}
    \newtheorem{Thm}[Lem]{Theorem}  
		\newtheorem*{Thm*}{Theorem}
    \newtheorem{Cor}[Lem]{Corollary}
\theoremstyle{definition}
\font\smallsc=cmcsc10
\font\smallsl=cmsl10
    \newtheorem{Def}[Lem]{Definition}
    \newtheorem{Exa}[Lem]{Example}
    \newtheorem{Rem}[Lem]{Remark}
\DeclareMathOperator{\Pic}{Pic}
\newcommand{\ora}[1]{\overrightarrow{#1}}
\newcommand{\rank}{\text{rank}}
\newcommand{\E}{\mathcal E}
\newcommand{\Spec}{\text{Spec}\,}
\newcommand{\F}{\mathcal F}
\newcommand{\I}{\mathcal I}
\newcommand{\M}{\mathcal M}
\renewcommand{\L}{\mathcal L}
\renewcommand{\O}{\mathcal O}
\newcommand{\C}{\mathcal C}
\newcommand{\X}{\mathcal X}
\newcommand{\Y}{\mathcal Y}
\newcommand{\D}{\mathcal D}
\newcommand{\R}{\mathbb R}
\renewcommand{\P}{\mathcal{P}}
\newcommand{\col}{\colon}
\newcommand{\ra}{\rightarrow}
\newcommand{\ol}{\overline}
\newcommand{\supp}{\text{supp}}
\newcommand{\wt}{\widetilde}
\newcommand{\wh}{\widehat}
\newcommand{\J}{\mathcal{J}}
\newcommand{\lra}{\longrightarrow}
\DeclareMathOperator{\Rel}{Rel}
\DeclareMathOperator{\rel}{Rel}
\DeclareMathOperator{\Hom}{Hom}
\renewcommand{\l}{\ell}
\newcommand{\Pos}{\mathcal{QD}_{v_0,\mu}}
\newcommand{\Posg}{\mathcal{QD}_{\mu, g}}
\newcommand{\mon}{\textnormal{Mon}}
\newcommand{\eff}{\textnormal{Eff}}
\newcommand{\Aut}{\textnormal{Aut}}
\newcommand{\cone}{\textnormal{cone}}
\newcommand{\val}{\text{val}}
\newcommand{\out}{\text{Out}}
\newcommand{\trop}{\text{trop}}
\renewcommand{\div}{\textnormal{div}}
\newcommand{\Div}{\text{Div}}
\newcommand{\Prin}{\text{Prin}}
\renewcommand{\Im}{\text{Im}}
\newcommand{\Graph}{\mathbf{Graph}}
\newcommand{\grap}{\mathcal{SG}}
\newcommand{\ord}{\textnormal{ord}}
\title[The universal tropical Jacobian]{The universal tropical Jacobian and the skeleton of the Esteves'  universal Jacobian}
\author{Alex Abreu and Marco Pacini}
\begin{document}

\begin{abstract}
For each universal genus-$g$ polarization $\mu$ of degree $d$, we construct a universal tropical Jacobian $J_{\mu,g}^{trop}$ as a generalized cone complex over the moduli space of stable pointed genus-$g$ tropical curves. We show several properties of the space $J_{\mu,g}^{trop}$. In particular, we prove that the natural compactification of $J_{\mu,g}^{trop}$ is the tropicalization of the Esteves' compactified universal Jacobian over the moduli space of stable pointed genus-$g$ curves.
\end{abstract}

\maketitle
\noindent MSC (2010): 14T05, 14H10, 14H40.\\
 Keywords: Tropical curve, algebraic curve, universal Jacobian, skeleton.

\tableofcontents

\section{Introduction}
   
\subsection{History and motivation}

A recent approach to study moduli spaces in algebraic geometry is the construction of a  suitable tropical analogue from which one can extract useful combinatorial and topological properties. The construction of the moduli space of stable tropical curves by Mikhalkin \cite{M}, and Brannetti, Melo and Viviani \cite{BMV}, and of its compactification by Caporaso \cite{Caporaso}, revealed strong similarities with the moduli space of Deligne-Mumford stable curves: these spaces share the same dimension and have dual stratifications. These analogies were made rigorous by Abramovich, Caporaso and Payne \cite{ACP}. They proved that the skeleton of the Berkovich analytification of the moduli stack of stable curves is isomorphic to the compactified moduli space of stable tropical curves. In the last few years, these type of constructions have been carried out also for other important moduli spaces: Cavalieri, Hampe, Markwig and Ranganathan \cite{CHMR}, and Ulirsch \cite{U} for weighted stable curves; Ranganathan \cite{R1} and \cite{R2} for rational stable maps;  Cavalieri, Markwig and Ranganathan \cite{CMR} for admissible covers; M\"oeller, Ulirsch and Werner \cite{MUW} for effective divisors; Caporaso, Melo and Pacini \cite{CMP} for spin curves. \par

 Another central moduli space in algebraic geometry is the Jacobian of an algebraic curve, and its compactifications. The construction of a compactified Jacobian for a curve dates back to Igusa \cite{I56}. Since then, several compactified Jacobians have been constructed. Mayer and Mumford \cite{MM} introduced for the first time the use of torsion-free rank-$1$ sheaves to describe the boundary points, and the technique of Geometric Invariant Theory (GIT). This technique was later employed by D'Souza \cite{Ds} to compactify Jacobians of integral curves, and by Oda and Seshadri \cite{OS}, who first considered the case of reducible curves. Caporaso \cite{C} and, soon after, Pandharipande \cite{Pand}, used GIT to construct a universal compactified Jacobian over the moduli space of stable curves.
Finally, Altman and Kleiman \cite{AK}, and Esteves \cite{Es01}, constructed a compactified relative Jacobian for families of reduced curves, without using GIT. This compactification employs the notion of quasistability for torsion-free rank-$1$ sheaves  with respect to a polarization, and depends on the choice of a section of the family of curves. 

Recently, Caporaso \cite{C08}, Melo \cite{M11}, \cite{M15}, and Kass and Pagani \cite{KP}, carried out a stacky version of theses Jacobians. In particular Melo \cite{M15} constructed a Deligne-Mumford stack $\ol{\J}_{\mu,g}$ over the moduli space $\ol{\M}_{g,1}$ of stable pointed genus-$g$ curves, following the work of Esteves \cite{Es01}. The Jacobian $\ol{\J}_{\mu,g}$, which we call the Esteves' universal Jacobian, compactifies the relative degree-$d$ Jacobian $\J_{d,g,1}$ over $\M_{g,1}$ and parametrizes torsion-free rank-$1$ sheaves which are quasistable with respect to a polarization $\mu$.\par
 
The goal of this paper is the construction of a universal tropical Jacobian and the study of its interplay with algebraic geometry. This is the first of a series of papers dedicated to the subject. In this paper we construct the universal tropical Jacobian $J_{\mu,g}^{trop}$ as a generalized cone complex and prove that there is an isomorphism between its natural compactification and the skeleton of the Esteves' universal compactified Jacobian $\ol{\J}_{\mu,g}$. This isomorphism is compatible with the maps of tropicalization and retraction to the skeleton. In the forthcoming paper \cite{APfuture}, we analyze the universal tropical Abel map from $M_{g,n}^{\trop}$ to $J_{\mu,g}^\trop$ and use it to resolve the universal Abel map for nodal curves having $\ol{\J}_{\mu,g}$ as a target. \par

It is worth noting that Baker and Rabinoff \cite{BR} already studied the problem of the tropicalization of the Jacobian of a curve. They prove that the skeleton of the analytification of the Jacobian of a curve over an algebraically closed, complete, non Archimedean valuation field is isomorphic to the usual tropical Jacobian defined in \cite{MZ} and \cite{BF}. Our analysis can be seen as an extension of  this result to the universal setting.\par

A comment about the difference between the pointed and non-pointed case is due. 
 The Jacobian $\ol{\J}_{\mu,g}$ is a toroidal Deligne-Mumford stack proper over $\ol{\M}_{g,1}$. This allows us to apply the setting and results of \cite[Section 6]{ACP}. The known degree-$d$ universal Jacobian stacks over $\ol{\M}_g$ are not separated when $\gcd(d-g+1,2g-2)\neq 1$, so in these cases a different approach is needed to tropicalize these compactified Jacobians. On the other hand, if $\gcd(d-g+1,2g-2)=1$, the quasistability condition does not depend on the chosen marked point, and our approach for constructing $\ol{J}^\trop_{\mu,g}$ can be employed, essentially in the same way, to build a universal tropical Jacobian over $M^\trop_g$.

   \subsection{The results}
   Let  $(X,p_0)$ be a pointed tropical curve and $\mu$ be a degree-$d$ polarization on $X$, i.e., a function $\mu\col X\to \R$ such that $\sum_{p\in X}\mu(p)=d$, where the sum is finite. We introduce the notion of $(p_0,\mu)$-quasistability for degree-$d$ divisors on $X$ imposing that certain inequalities hold for every tropical subcurve of $X$. We are able to prove the following theorem, which is the tropical analogue of a similar statement on $(p_0,\mu)$-quasistable torsion-free rank-$1$ sheaves appearing in \cite{Es01}.

\begin{Thm*}[\ref{thm:quasistable}]
Every degree-$d$ divisor on a tropical curve is equivalent to a unique $(p_0,\mu)$-quasistable degree-$d$ divisor.
\end{Thm*}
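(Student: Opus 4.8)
The plan is to prove the statement by separately establishing \emph{existence} and \emph{uniqueness}, thereby exhibiting the $(p_0,\mu)$-quasistable divisors as a complete system of non-redundant representatives for $\Pic^d(X)=\Div^d(X)/\!\sim$. First I would fix a model of the metric graph underlying $X$ in which $p_0$, the (finite) support of $\mu$, and the support of the given divisor $D$ are all vertices, so that for every tropical subcurve $Y$ the quantities $\deg_Y(D)$, $\mu(Y)$ and the boundary valence $\delta_Y$ are all well defined. Throughout I use that $D\sim D'$ means $D-D'=\div(f)$ for a piecewise-linear function $f$ with integer slopes, together with the divergence formula computing $\deg_Y(\div f)$ as the sum of the outgoing slopes of $f$ along the edges crossing $\partial Y$ (oriented out of $Y$), the interior contributions cancelling by linearity of $f$ on each edge.

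For uniqueness, suppose $D_1\sim D_2$ are both $(p_0,\mu)$-quasistable and write $D_1-D_2=\div(f)$ with $f$ non-constant. Let $Y=\{x:f(x)=\max f\}$, a proper nonempty subcurve. Since $f$ strictly decreases upon leaving $Y$ and has integer slopes, every outgoing slope at $\partial Y$ is $\le -1$, so $\deg_Y(\div f)\le -\delta_Y$. Hence $\deg_Y(D_1)=\deg_Y(D_2)+\deg_Y(\div f)\le \deg_Y(D_2)-\delta_Y$. Combined with the semistability bounds $\deg_Y(D_1)\ge \mu(Y)-\tfrac{\delta_Y}{2}$ and $\deg_Y(D_2)\le \mu(Y)+\tfrac{\delta_Y}{2}$, every inequality must be an equality, forcing $\deg_Y(D_1)=\mu(Y)-\tfrac{\delta_Y}{2}$ and $\deg_Y(D_2)=\mu(Y)+\tfrac{\delta_Y}{2}$. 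But the rigidification at $p_0$ forbids exactly one of these two boundary equalities (according as $p_0\in Y$ or $p_0\notin Y$), a contradiction. Therefore $f$ is constant and $D_1=D_2$. This argument pinpoints the role of $p_0$: it breaks precisely the ties that can occur on the walls of the semistability region, and it also explains the comment that for $\gcd(d-g+1,2g-2)=1$ the marked point is irrelevant, since then $\mu(Y)\pm\tfrac{\delta_Y}{2}\notin\mathbb{Z}$ for the relevant $Y$ and the critical equalities are already impossible.

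For existence, I would begin with an arbitrary $D$ and repair violations of quasistability by chip-firing. If a subcurve $Y$ has $\deg_Y(D)$ above the allowed range, firing out of $Y$ (subtracting $\div$ of a piecewise-linear ramp that drops across $\partial Y$) transfers chips out of $Y$ and lowers $\deg_Y$; symmetrically for subcurves whose degree is too small. To ensure the process reaches a quasistable divisor, I would always fire a \emph{maximal} violating subcurve, so that corrected inequalities are not reintroduced, and track a monotone potential — the tropical analogue of successively twisting by effective boundary divisors in Esteves' algebraic argument — that strictly improves at each step and is controlled by the fact that equivalence classes form the compact real torus $\Div^0(X)/\Prin(X)$. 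An equivalent route is to show that the closed semistability inequalities cut out, in the universal cover of this torus, a fundamental domain for the period lattice $\Prin(X)$, and that the strict $p_0$-conditions delete exactly the redundant boundary faces, yielding a set-theoretic bijection between quasistable divisors and $\Pic^d(X)$.

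I expect the existence half to be the main obstacle. Uniqueness is a short extremal argument, but existence requires either a genuine termination proof for the reduction on a \emph{metric} (rather than purely combinatorial) graph — where chips occupy real positions and one must rule out infinite cascades of fractional firings — or a careful verification of the tiling/fundamental-domain property, including a precise analysis of the boundary strata showing that the $p_0$-rigidified strict inequalities select one representative on each wall. Confirming that the conventions chosen in the definition of quasistability are exactly those making the semistable region tile space and making $p_0$ break all remaining ties is the delicate and decisive point.
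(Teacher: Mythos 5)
Your uniqueness argument is correct, and it is essentially the paper's own: the paper contracts the segments where $f$ is constant and then takes a sink of the acyclic orientation induced by $f$ (Lemma \ref{lem:valP}), which is exactly a point of your maximal locus; the two forced boundary equalities are then killed by the $p_0$-condition precisely as you describe. The only discrepancy is your sign convention for $\ord_p(f)$ (outgoing rather than the paper's incoming slopes), which swaps the roles of $\D_1$ and $\D_2$ in the final contradiction but is immaterial by symmetry.

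The existence half, however, is a genuine gap, and you flag it yourself. As written, you never specify \emph{which} subcurve to fire, \emph{how far} to fire (the length of the ramp is a real parameter and is the crux in the metric setting), or \emph{what} the monotone potential is; the heuristic ``fire a maximal violating subcurve so that corrected inequalities are not reintroduced'' is unproven, and indeed a firing can create new violations elsewhere. The paper's proof consists exactly of the missing termination argument. By the submodularity of $\beta_\D$ (Lemma \ref{lem:beta}) there is a \emph{unique minimal} subcurve $Y$ attaining the minimum of $\beta_\D$; one subtracts the chip-firing divisor $\D_{Y,\l_0}$ emanating from $Y$, where $\l_0$ is the minimum of the lengths of the edges in $\out(Y)$, obtaining $\D'=\D-\D_{Y,\l_0}$. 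The key computation is that $\beta_{\D'}(Z)\geq\beta_\D(Y)$ for \emph{every} subcurve $Z$, and that whenever equality holds one has $\rel_{\D'}(Z)<\rel_\D(Y)$, i.e.\ the number of relevant points outside the new minimizer strictly drops. Hence the pair $\bigl(\min\beta,\ \rel \text{ of the minimal minimizer}\bigr)$ improves lexicographically; since the first coordinate lives in a discrete set (values of $\beta$ lie in finitely many cosets of $\tfrac12\mathbb{Z}$) and is bounded above by $0$, while the second is a nonnegative integer, the process terminates at a $\mu$-semistable divisor, and a second run of the same procedure, applied to the minimal subcurve containing $p_0$ with $\beta=0$, yields a $(p_0,\mu)$-quasistable one. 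Without this (or a fully worked-out fundamental-domain argument, which the paper does not use either), your proposal establishes only the uniqueness statement.
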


This is also a generalization of the same result for break divisors in degree $g$ (see \cite{MZ} and \cite[Theorem 1.1]{ABKS}). In fact, a break divisor is $(p_0,\mu)$-quasistable for the so-called degree-$g$ canonical polarization $\mu$ (see \cite[Introduction]{CC}, \cite{Shen}, and  Example \ref{exa:pol}). As for break divisors, quasistable divisors are well-behaved when varying the tropical curves under specializations. \par

 Then we define a polyhedral complex $J_{p_0,\mu}^\trop(X)$ parametrizing $(p_0,\mu)$-quasistable divisors on the tropical curve $X$. This polyhedral complex is the tropical analogue of the compactified Jacobian constructed by Oda-Seshadri \cite{OS} and Esteves \cite{Es01}. In Theorem \ref{thm:jX} we prove that $J_{p_0,\mu}^\trop(X)$ is homeomorphic to the usual tropical Jacobian $J^\trop(X)$ of $X$. When $\mu$ is the canonical degree-$g$ polarization, the polyhedral complex $J_{p_0,\mu}^\trop(X)$ is the same as the one constructed by An, Baker, Kuperberg and Shokrieh \cite{ABKS}.\par

These polyhedral complexes can be cast together to form a generalized cone complex $J^\trop_{\mu,g}$ parametrizing tuples $(X,p_0,\D)$ where $(X,p_0)$ is a stable pointed genus-$g$ tropical curve and $\D$ is a $(p_0,\mu)$-quasistable divisor on $X$. Here $\mu$ is a universal genus-$g$ polarization of degree-$d$, meaning that $\mu$ is compatible with specializations of genus-$g$ graphs. We prove that the generalized cone complex $J^\trop_{\mu,g}$ is a universal tropical Jacobian over $M^\trop_{g,1}$, as stated by the following theorem. 

\begin{Thm*}[\ref{thm:maintrop}]
The generalized cone complex $J^\trop_{\mu,g}$ has pure dimension $4g-2$ and is connected in codimension $1$. The natural forgetful map $\pi^{trop}\col J^\trop_{\mu,g}\to M^\trop_{g,1}$ is a map of generalized cone complexes and for every equivalence class of a  stable pointed genus-$g$ tropical curve $X$, we have a homeomorphism
\[
(\pi^{trop})^{-1}([X])\cong J^\trop(X)/\Aut(X).
\]
\end{Thm*}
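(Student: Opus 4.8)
The plan is to treat the three assertions—purity together with codimension-one connectivity, the morphism property of $\pi^\trop$, and the fiber description—separately, disposing of the latter two first, since they follow rather directly from the construction and from Theorems \ref{thm:quasistable} and \ref{thm:jX}, and reserving the genuine work for purity and connectivity. Throughout I use that a cone of $J^\trop_{\mu,g}$ is indexed by a combinatorial type, namely a stable genus-$g$ graph $(G,v_0)$ together with the combinatorial type of a $(p_0,\mu)$-quasistable divisor $\D$, the latter recorded by marking the support points of $\D$ as extra (two-valent, weight-zero) vertices on a subdivision $G'$ of $G$; the coordinates of the cone are the edge lengths of $G'$, so its dimension is $|E(G')|$. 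The forgetful map $\pi^\trop$ sends this cone to the cone $\sigma_G$ of $M^\trop_{g,1}$ by stabilizing $G'$ back to $G$, i.e.\ summing the lengths of the edges of $G'$ lying over each edge of $G$.

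First I would verify that $\pi^\trop$ is a morphism of generalized cone complexes. The length-summation map above is integral-linear, so the only real point is that it descends to the two colimits: it must be equivariant for the natural homomorphism $\Aut(G',\D)\to\Aut(G)$ and must commute with the edge-contraction face maps, both of which are immediate from the definitions. For the fiber description, fix a stable pointed genus-$g$ tropical curve $X$ with class $[X]$. By construction the points of $J^\trop_{\mu,g}$ over $[X]$ are exactly the $(p_0,\mu)$-quasistable divisors on $X$, taken modulo the identifications induced by $\Aut(X)$; these assemble into $J^\trop_{p_0,\mu}(X)/\Aut(X)$, and Theorem \ref{thm:jX} then yields the asserted homeomorphism with $J^\trop(X)/\Aut(X)$. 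The one item needing care is that the cones of $J^\trop_{\mu,g}$ lying over the single cone $\sigma_X$ glue precisely to the polyhedral complex $J^\trop_{p_0,\mu}(X)$ with its $\Aut(X)$-action, which one checks by comparing the two face-gluing data directly.

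Next, the dimension count. The maximal cones of $M^\trop_{g,1}$ are the $(3g-2)$-dimensional cones $\sigma_G$ with $G$ trivalent and all vertex weights zero, so that $b_1(G)=g$; over such a $G$ the fiber $J^\trop(X)$ is a real torus of dimension $g$, and a top-dimensional cell of its decomposition $J^\trop_{p_0,\mu}(X)$ contributes $g$ further parameters, namely the positions of divisor points in the interiors of edges. Hence a maximal cone of $J^\trop_{\mu,g}$ has dimension $(3g-2)+g=4g-2$. To prove purity I would show that every cone is a face of one of this dimension: starting from an arbitrary quasistable pair $(G',\D)$, I repeatedly enlarge the edge set—either by splitting a vertex of positive weight or of valence $>3$ and extending $\D$ to a quasistable divisor on the degeneration (with the extension pinned down by the uniqueness in Theorem \ref{thm:quasistable}), or by sliding a divisor point off a vertex into the interior of an edge—until $G$ is trivalent and weightless and $\D$ lies in a top-dimensional cell. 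Each move realizes the original cone as a face of a strictly larger one, and the process terminates at dimension $4g-2$.

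Finally, connectivity in codimension one. I would prove that the dual graph of maximal cones is connected by combining two facts: $M^\trop_{g,1}$ is itself connected in codimension one, and over each maximal base cone $\sigma_G$ the decomposition of the fiber torus $J^\trop(X)$ is connected in codimension one, because a connected $g$-manifold with a polyhedral cell structure has connected codimension-one dual graph. Moving within a fixed $\sigma_G$ handles the fiber directions, while crossing a codimension-one wall of $M^\trop_{g,1}$—an edge contraction $G\rightsquigarrow G/e$—passes between base cones. \textbf{The main obstacle}, and the technical heart of the argument, is exactly the control of this wall-crossing: I must show that the decompositions $J^\trop_{p_0,\mu}(X)$ vary compatibly under specialization, so that maximal cones over $\sigma_G$ and over an adjacent maximal cone are genuinely linked through shared codimension-one faces rather than meeting only in high codimension. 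This compatibility is precisely the good behavior of quasistable divisors under specialization, and it is here that the uniqueness statement of Theorem \ref{thm:quasistable} is indispensable, since it selects, canonically and hence compatibly across the wall, the quasistable representative in each linear equivalence class.
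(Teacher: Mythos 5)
Your architecture is sound, and the two assertions you dispose of first are handled exactly as in the paper: the morphism property is the factorization of each cone map $\R_{\geq 0}^{E(\Gamma^\E)}\to\R_{\geq 0}^{E(\Gamma)}\to\R_{\geq 0}^{E(\Gamma)}/\Aut(\Gamma)$ through the length-summation map of Equation \eqref{eq:hat}, and the fiber description comes, as in the paper, from the natural map $h\col J^\trop_{p_0,\mu}(X)\to J^\trop_{\mu,g}$ whose image is $(\pi^\trop)^{-1}([X])$ and whose fibers are $\Aut(X)$-orbits, followed by the homeomorphism of Theorem \ref{thm:jX}. Your fiber-connectivity argument (a polyhedral decomposition of the connected $g$-torus $J^\trop(X)$ has connected codimension-one dual graph) is even a pleasant topological alternative to the paper's combinatorial route.

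The genuine gap lies in the two steps that carry all the weight — purity and the wall-crossing — and in both places the tool you invoke, the uniqueness part of Theorem \ref{thm:quasistable}, is not the relevant statement. What is actually needed are \emph{existence/lifting} statements for quasistable pseudo-divisors along degenerations of graphs: (a) given a specialization $\iota\col\Gamma\to\Gamma'$ (your ``splitting a vertex'') and a quasistable pseudo-divisor on $\Gamma'$, there exists one on $\Gamma$ pushing forward to it; and (b) given a quasistable $(\E,D)$ on $\Gamma$ with $|\E|<g$, there exists a quasistable $(\E',D')$ with $|\E'|=|\E|+1$ specializing to it (your ``sliding a chip into an edge''). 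These are precisely the paper's Propositions \ref{prop:jclosed} and \ref{prop:chain}; neither is formal — (b) in particular requires a lengthy case analysis — and neither follows from uniqueness of representatives on a fixed curve. Indeed the lifts in (a) are typically non-unique (the proof of Proposition \ref{prop:cod1} exhibits fibers of $\iota_*$ with three elements), so ``pinned down by uniqueness'' cannot be correct as stated. Uniqueness could be made to bear on (a) only through an additional lemma asserting that pushforward along a tropical specialization preserves linear equivalence (take any set-theoretic lift, replace it by its quasistable representative via Theorem \ref{thm:quasistable}, push forward using Lemma \ref{lem:i*} and compare); you neither state nor prove such a lemma. As written, the step you yourself call the ``technical heart'' — that codimension-one cones over a wall of $M^\trop_{g,1}$ are faces of maximal cones on both sides, and that every cone is a face of one of dimension $4g-2$ — is asserted rather than proved. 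The paper fills exactly this hole by first establishing the purely combinatorial Theorem \ref{thm:maingraph} for the poset $\mathcal{QD}_{\mu,g}$ (built on Propositions \ref{prop:chain}, \ref{prop:jclosed}, \ref{prop:cod1} and the known codimension-one connectivity of the trivalent locus of $M^\trop_{g,1}$), and then transporting it to $J^\trop_{\mu,g}$ via the continuous map of Remark \ref{rem:contmu}.
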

 
 The above setup can be also given for graphs with $1$ leg. In fact, our starting point is the notion of $(v_0,\mu)$-quasistability for divisors on graphs with legs. In this case, the main objects of study have the natural structure of poset, instead of polyhedral/cone complex. The two frameworks are closely related, since every pointed tropical curve $(X,p_0)$ has a (not unique) model $(\Gamma_X,v_0)$ which is a graph with $1$ leg. There is a ranked poset $\Pos(\Gamma_X)$ of $(v_0,\mu)$-quasistable (pseudo-)divisors on $\Gamma_X$, and a natural continuous map $J^{\trop}_{p_0,\mu}(X)\to \Pos(\Gamma_X)$ (see Corollary \ref{cor:quasiquasi} and Remark \ref{rem:cont}). We also define a universal ranked poset $\Posg$ and a continuous map $J^\trop_{\mu,g}\to \Posg$, where $\Posg$ parametrizes tuples $(\Gamma,v_0,D)$, with $(\Gamma,v_0)$ a stable genus-$g$ graph with $1$ leg and $D$ a $(v_0,\mu)$-quasistable (pseudo-)divisor on $\Gamma$. The properties of $\Posg$ are essentially the same as the ones in the above theorem (see Theorem \ref{thm:maingraph}).  \par 
Next, we study the skeleton of the Esteves' universal compactified Jacobian $\ol{\J}_{\mu, g}$.  The first issue we need to address is the description of the natural stratification induced by the embedding $\J_{d,g,1}\subset \ol{\J}_{\mu, g}$. Recently, the analogous problem for other compactified Jacobians has been studied by Caporaso and Christ \cite{CC}. They give a combinatorially interesting incarnation for a stratification of the Caporaso's compactified Jacobians in degree $g-1$ and $g$.

 A natural stratification for the Esteves' compactified Jacobian $\J_{p_0,\mu}(X)$ of a fixed nodal curve $X$ is described by Melo and Viviani \cite{MV}. In Proposition \ref{prop:stratification} we show that their stratification can be refined to a graded stratification of $\ol{\J}_{\mu,g}$ by the poset $\Posg$. We give a useful description of the strata as quotient stacks in Proposition \ref{prop:stratades}. These are the key results for our analysis of the skeleton $\ol{\Sigma}(\ol{\J}_{\mu,g})$ of $\ol{\J}_{\mu,g}$, which allows us to prove the following theorem.
\begin{Thm*}[\ref{thm:tropj}]
There is an isomorphism of extended generalized cone complexes
\[
\Phi_{\ol{\J}_{\mu,g}}\col \ol{\Sigma}(\overline{\J}_{\mu,g})\to \ol{J}^{\trop}_{\mu,g}.
\]
Moreover, the following diagram is commutative
\begin{eqnarray*}
\SelectTips{cm}{11}
\begin{xy} <16pt,0pt>:
\xymatrix{
\ol{\J}_{\mu,g}^{an} \ar@/^2pc/[rr]^{\trop_{\ol{\J}_{\mu,g}}} \ar[d] \ar[r]^{{\bf p}_{\ol{\J}_{\mu,g}}\;}   
  & \ar[r]^{{\Phi}_{\ol{\J}_{\mu,g}}} \ar[d] \ol{\Sigma}(\ol{\J}_{g,n}) & \ol{J}_{\mu,g}^{trop}   \ar[d] \\
\ol{\M}_{g,1}^{an}  \ar@/_2pc/[rr]^{\trop_{\ol{\M}_{g,1}}} \ar[r]^{{\bf p}_{\ol{ {\M}}_{g,1}}\;}            &   \ar[r]^{\cong}         \ol{\Sigma}(\ol{{\M}}_{g,1})  & \ol{M}_{g,1}^{trop}
 }
\end{xy}
\end{eqnarray*}
where the vertical maps are forgetful maps, ${\bf p}_{-}$ denotes the retractions to the skeleton and $\trop_{-}$ denotes the tropicalization maps.
\end{Thm*}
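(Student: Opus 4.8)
The plan is to deduce the theorem from the general machinery of Abramovich--Caporaso--Payne \cite{ACP} for toroidal Deligne--Mumford stacks, once the toroidal structure of $\ol{\J}_{\mu,g}$ has been matched cone-by-cone with the tropical moduli space $\ol{J}^{trop}_{\mu,g}$. The first step is to record that $\ol{\J}_{\mu,g}$ is a toroidal Deligne--Mumford stack, proper over $\ol{\M}_{g,1}$, whose boundary $\ol{\J}_{\mu,g}\setminus \J_{d,g,1}$ carries the graded stratification by the poset $\Posg$ produced in Proposition \ref{prop:stratification}. By Proposition \ref{prop:stratades} each stratum is a quotient stack, so its \'etale-local structure is controlled and the associated characteristic monoids are explicit. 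This places us exactly in the situation of \cite[Section 6]{ACP}, where the skeleton $\ol{\Sigma}(\ol{\J}_{\mu,g})$ is assembled as the colimit of the cones attached to the strata, glued along the face maps dictated by the closure relations of the stratification.

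The technical heart is the cone-by-cone identification. For a stratum indexed by $(\Gamma,v_0,D)\in\Posg$, with $D$ a $(v_0,\mu)$-quasistable (pseudo-)divisor, I would compute the cone that the \cite{ACP} recipe attaches to it as the dual $\Hom(M,\R_{\geq 0})$ of the monoid $M$ of effective Cartier divisors supported on the boundary near a point of the stratum. The key claim is that $M$ is freely generated by the local smoothing equations, so that the resulting cone is the orthant whose coordinates are the edge lengths of the graph underlying $D$, matching precisely the cell of $\ol{J}^{trop}_{\mu,g}$ lying over the cone $\sigma_\Gamma\subset \ol{M}^{trop}_{g,1}$ and indexed by $D$; here the pseudo-divisor structure accounts for divisor points in edge interiors, which appear as subdivisions and contribute the extra coordinates beyond those of $\sigma_\Gamma$. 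Establishing this requires analyzing the deformation theory of a $(p_0,\mu)$-quasistable torsion-free rank-$1$ sheaf on a nodal curve, identifying the local boundary equations with the smoothing coordinates of the nodes, and checking that the integral (lattice) structure, and not merely the underlying cone, agrees with the one coming from the edges of the graph. Theorem \ref{thm:quasistable}, which furnishes a unique quasistable representative in each equivalence class, is what guarantees that distinct cells do not collapse and that the indexing by $\Posg$ is faithful. I expect this deformation-theoretic computation, together with the verification that the \cite{ACP} face maps reproduce the specialization maps of $\Posg$, to be the main obstacle.

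Granting the cone-by-cone match and the gluing, the isomorphism $\Phi_{\ol{\J}_{\mu,g}}$ of generalized cone complexes follows, and extending over the boundary at infinity upgrades it to an isomorphism of extended generalized cone complexes, using the description of $J^{trop}(X)$ from Theorem \ref{thm:jX} to control the fibers. For the commutative diagram, the bottom isomorphism $\ol{\Sigma}(\ol{\M}_{g,1})\cong \ol{M}^{trop}_{g,1}$ is the \cite{ACP} theorem for $\ol{\M}_{g,1}$, and the vertical forgetful maps are toroidal; functoriality of the skeleton construction with respect to the toroidal morphism $\ol{\J}_{\mu,g}\to\ol{\M}_{g,1}$ yields commutativity of the square involving the retractions, while the outer commutativity, namely that $\trop_{\ol{\J}_{\mu,g}}$ factors as $\Phi_{\ol{\J}_{\mu,g}}\circ {\bf p}_{\ol{\J}_{\mu,g}}$, is the general \cite{ACP} compatibility of $\trop$ with the retraction to the skeleton, transported through $\Phi_{\ol{\J}_{\mu,g}}$. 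The last verification is that the two routes around the diagram send a sheaf over a valued field to the same quasistable divisor on its dual tropical curve, which reduces to the pointwise compatibility of the reduction map with the combinatorial tropicalization of divisors.
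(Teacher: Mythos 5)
Your strategy coincides with the paper's---apply the machinery of \cite[Section 6]{ACP} to the toroidal embedding $\J_{d,g,1}\subset\ol{\J}_{\mu,g}$, using the stratification of Proposition \ref{prop:stratification} and the quotient presentation of the strata from Proposition \ref{prop:stratades}---but your assembly of the skeleton has a genuine gap: you never address the monodromy groups. In the \cite{ACP} construction the skeleton is not merely the colimit of the cones $\ol{\sigma}_W$ glued along face maps coming from closure relations; each cone is also glued to itself by the monodromy group $H_W$, the image of $\pi_1^{et}(W,w)$ acting on $\mon_{\ol{\J}_{\mu,g},w}$, and the resulting cells are $\ol{\sigma}^\circ_W/H_W$. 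On the tropical side the cells of $\ol{J}^{\trop}_{\mu,g}$ are $\ol{\sigma}^\circ_{(\Gamma,\E,D)}/\Aut(\Gamma,\E,D)$. So the cone-by-cone identification you describe yields the theorem only after one proves $H_{\J_{(\Gamma,\E,D)}}=\Aut(\Gamma,\E,D)$; this is exactly Lemma \ref{lem:mon}, whose proof needs both the presentation $\J_{(\Gamma,\E,D)}\simeq[\widetilde{\J}_{(\Gamma,\E,D)}/\Aut(\Gamma,\E,D)]$ of Proposition \ref{prop:stratades} and the description of the boundary divisors of $\ol{\J}_{\mu,g}$ from \cite{MV14}. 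If the monodromy were, say, trivial, the colimit you describe would be a strictly larger space than $\ol{J}^{\trop}_{\mu,g}$, so this step cannot be omitted.

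Two further corrections. First, the deformation-theoretic computation you flag as ``the main obstacle'' is not carried out from scratch in the paper: Lemma \ref{lem:openimm} shows that $\ol{\J}_{\mu,g}\to\ol{\J}_{d,g,1}$ is an open immersion (quasistability is open inside semistability), so the complete local rings, and hence the identifications $\eff_{\ol{\J}_{\mu,g},w}\cong\mathbb{Z}_{\geq0}^{E(\Gamma^\E)}$, are imported directly from \cite[Equation 8.4]{CKV} as in Equation \eqref{eq:Jloc}; this is the short route you were missing. Second, Theorems \ref{thm:quasistable} and \ref{thm:jX} play no role in this proof: the faithfulness of the indexing and the non-collapsing of cells are precisely the content of the partition into irreducible strata and the closure relations in Proposition \ref{prop:stratification}, and the passage to extended complexes is automatic from the definition of the extended cones $\ol{\sigma}_W$, not something controlled by $J^{\trop}(X)$. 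Your outline of the commutative diagram is correct in structure and matches the paper (left square by functoriality, Proposition \ref{prop:funct}), but the ``pointwise compatibility'' you invoke for the outer rectangle should be made concrete: in Equation \eqref{eq:Jloc} one has $T_e=X_eY_e$, so $\nu_R(\psi^{\#}T_e)=\nu_R(\psi^{\#}X_e)+\nu_R(\psi^{\#}Y_e)$, i.e., contracting the two edges of $\Gamma^{\E}$ over $e$ adds their lengths, and this is what identifies $\pi^{\trop}\circ\trop_{\ol{\J}_{\mu,g}}$ with $\trop_{\ol{\M}_{g,1}}\circ\pi^{an}$.
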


In short, in Sections \ref{sec:preliminaries} and  \ref{sec:trop} we introduce the background material and the technical tools about graphs, posets, and tropical curves. In Section \ref{sec:quasigraph} we study quasistability for pseudo-divisors on graphs. In Section \ref{sec:univtropJ} we introduce the Jacobian of quasistable divisors on tropical curves and study its properties. Finally, in Section \ref{sec:skeleton}, we prove the result on the skeleton of Esteves' universal compactified Jacobian.

\section{Graphs and posets}
\label{sec:preliminaries}

\subsection{Graphs}
\label{subsec:graphs}
  Given a graph $\Gamma$, we denote by $E(\Gamma)$ the set of edges  and by $V(\Gamma)$ the set of vertices of $\Gamma$. If $\E\subset E(\Gamma)$ and $v$ is a vertex of $\Gamma$, we define the \emph{valence of $v$ in $\E$}, denoted by $\val_\E(v)$, as the number of edges in $\E$ incident to $v$ (with loops counting twice). In the case that $\E=E(\Gamma)$ we simply write $\val(v)$ and call it the \emph{valence} of $v$.	We say that $\Gamma$ is \emph{$k$-regular} if every vertex of $\Gamma$ have valence $k$; we say that $\Gamma$ is a \emph{circular graph} if $\Gamma$ is $2$-regular and connected. A \emph{cycle} on $\Gamma$ is a circular subgraph of $\Gamma$. \par
	A \emph{digraph} (directed graph) is a graph $\Gamma$ where each edge has a orientation, i.e., there are functions $s,t\col E(\Gamma)\to V(\Gamma)$ called \emph{source} and \emph{target} and each edge is oriented from the source to the target. We denote a digraph by $\overrightarrow{\Gamma}$ and by $\Gamma$ its underlying graph. We let $E(\overrightarrow{\Gamma})$ be the set of oriented edges of $\overrightarrow{\Gamma}$ and we set $V(\overrightarrow{\Gamma}):=V(\Gamma)$.
	\par
 Let $\Gamma$ be a graph. Fix disjoint subsets $V, W\subset V(\Gamma)$. We define $E(V,W)$ as the set of edges joining a vertex in $V$ with one in $W$. More generally if $V$ and $W$ have nonempty intersection, we define $E(V,W):=E(V\setminus W, W\setminus V)$. We set $V^c:=V(\Gamma)\setminus V$.  If $E(V,V^c)$ is nonempty, it is called a \emph{cut} of $\Gamma$. We set $\delta_{\Gamma,V}:=|E(V,V^c)|$. When no confusion may arise, we simply write $\delta_V$ instead of $\delta_{\Gamma,V}$. \par
		Fix a subset $\E\subset E(\Gamma)$. We define the graphs $\Gamma/\E$ and $\Gamma_\E$ as the graphs obtained by the contraction of edges in $\E$ and by the removal of edges in $\E$, respectively. We say that $\E$ is \emph{nondisconnecting} if $\Gamma_\E$ is connected. Note that $V(\Gamma_\E)=V(\Gamma)$ and $E(\Gamma_\E)=E(\Gamma)\setminus\E$. Also there is a natural surjection $V(\Gamma)\to V(\Gamma/\E)$ and a natural identification $E(\Gamma/\E)=E(\Gamma)\setminus\E$. Recall that a graph $\Gamma$ specializes to a graph $\Gamma'$ if there exists $\E\subset E(\Gamma)$ such that $\Gamma'$ is isomorphic to $\Gamma/\E$. We denote a specialization of $\Gamma$ to $\Gamma'$ by $\iota\col \Gamma\ra\Gamma'$. A specialization $\iota\col \Gamma\ra\Gamma'$ comes equipped with a surjective map $\iota^V\col V(\Gamma)\ra V(\Gamma')$ and an injective map $\iota^E\col E(\Gamma')\ra E(\Gamma)$. We usually write $\iota=\iota^V$ and we will see $E(\Gamma')$ as a subset of $E(\Gamma)$ via $\iota^E$. A similar notion of specialization can be given for digraphs. \par 
   We also define the graph $\Gamma^\E$ as the graph obtained from $\Gamma$ by adding exactly one vertex in the interior of each edge in $\E$. We call $\Gamma^\E$ the \emph{$\E$-subdivision} of $\Gamma$. Note that there is a natural inclusion $V(\Gamma)\subset V(\Gamma^\E)$. We call a vertex in $V(\Gamma^\E)\setminus V(\Gamma)$ an \emph{exceptional vertex}. We set $\Gamma^{(2)}:=\Gamma^{E(\Gamma)}$. \par
	More generally, we say that a graph $\Gamma'$ is a \emph{refinement} of the graph $\Gamma$ if $\Gamma'$ is obtained from $\Gamma$ by successive subdivisions. In other words, there is an inclusion $a\col V(\Gamma)\to V(\Gamma')$ and a surjection $b\col E(\Gamma')\to E(\Gamma)$ such that for any edge $e$ of $\Gamma$ there exist distinct vertices $x_0,\ldots, x_n\in V(\Gamma')$ and distinct edges $e_1,\ldots, e_n\in E(\Gamma')$ such that
	\begin{enumerate}
\item $x_0=a(v_0)$, $x_n=a(v_1)$ and $x_{i}\notin\Im(a)$ for every $i=1,\ldots,n-1$, where $v_0$ and $v_1$ are precisely the vertices of $\Gamma$ incident to $e$;
\item $b^{-1}(e)=\{e_1,\ldots, e_n\}$;
\item the vertices in $V(\Gamma')$ incident to $e_i$ are precisely $x_{i-1}$ and $x_i$, for $i=1,\dots,n$.
\end{enumerate}
We say that the edges $e_1,\ldots, e_n$ (respectively, $x_1,\ldots, x_{n-1}$) are the edges (respectively, the exceptional vertices) \emph{over} $e$.\par

	Let $\iota\col \Gamma\to\Gamma'$ be a specialization and $\E'\subset E(\Gamma')$ and $\E\subset E(\Gamma)$ be sets such that $\E'\subset\E\cap E(\Gamma')$. We call a specialization  $\iota^\E\col\Gamma^\E\ra\Gamma'^{\E'}$ \emph{compatible with $\iota$} if the following diagrams
\[
\SelectTips{cm}{11}
\begin{xy} <16pt,0pt>:
\xymatrix{ V(\Gamma)\ar[d]\ar[r]^{\iota} &V(\Gamma')\ar[d]&& E(\Gamma')\ar[r]^\iota &E(\Gamma)\\
             V(\Gamma^\E)\ar[r]^{\iota^\E}& V(\Gamma'^{\E'})&&E(\Gamma'^{\E'})\ar[u]\ar[r]^{\iota^\E} & E(\Gamma^\E)\ar[u]
}
\end{xy}
\]
are commutative. If $\iota$ is the identity of $\Gamma$, we just call $\iota^\E$ \emph{compatible}. Note that there are $2^{|\E\setminus\E'|}$ compatible specializations of $\Gamma^\E$ to $\Gamma^{\E'}$.

   A divisor $D$ on the graph $\Gamma$ is a function $D\colon V(\Gamma)\to \mathbb{Z}$. The degree of $D$ is the integer $\deg D:=\sum_{v\in V(\Gamma)}D(v)$. The set of divisors on $\Gamma$ forms an Abelian group denoted by $\Div(\Gamma)$. Given a subset $\E\subset E(\Gamma)$ and a divisor $D$ on $\Gamma$, we define $D^\E$ as the divisor on $\Gamma^\E$ such that 
\[
D^\E(v)=\begin{cases}
         \begin{array}{ll}
				  D(v),&\text{ if }v\in V(\Gamma);\\
					0,&\text{ if }v\in V(\Gamma^\E)\setminus V(\Gamma).
					\end{array}
					\end{cases}
\]
We also define a divisor $D_\E$ on $\Gamma_\E$ as $D_\E(v):=D(v)$, for every $v\in V(\Gamma_\E)=V(\Gamma)$.\par
	A \emph{pseudo-divisor} on the graph $\Gamma$ is a pair $(\E,D)$ where $\E\subset E(\Gamma)$ and $D$ is a divisor on $\Gamma^\E$ such that $D(v)=-1$ for every exceptional vertex $v\in V(\Gamma^\E)$. If $\E=\emptyset$, then $(\E,D)$ is just a divisor of $\Gamma$. Since every divisor $D$ on $\Gamma^\E$ can be lifted to a divisor on $\Gamma^{(2)}$, it is equivalent to define a pseudo-divisor on $\Gamma$ as a divisor $D$ on $\Gamma^{(2)}$ such that $D(v)=0,-1$ for every exceptional vertex $v\in V(\Gamma^{(2)})$.\par
	Let $\Gamma$ and $\Gamma'$ be graphs. Given a specialization $\iota\col\Gamma\ra\Gamma'$ and a divisor $D$ on $\Gamma$, we define the divisor $\iota_*(D)$ on $\Gamma'$ taking $v'\in V(\Gamma')$ to 
\[
\iota_*(D)(v'):=\sum_{v\in\iota^{-1}(v')}D(v).
\] 
	We say that a pair $(\Gamma,D)$ \emph{specializes} to a pair $(\Gamma',D')$, where $D$ is a divisor on $\Gamma$ and $D'$ is a divisor on $\Gamma'$, if there exists a specialization of graphs $\iota\col\Gamma\ra\Gamma'$ such that $D'=\iota_*(D)$; we denote by $\iota\col(\Gamma,D)\ra(\Gamma',D')$ a specialization of pairs. Note that, given a specialization $\iota\col \Gamma\ra\Gamma'$ and a subset $\E$ of $E(\Gamma)$, there exists an induced specialization $\iota^\E\col \Gamma^\E\to\Gamma'^{\E'}$, where $\E':=\E\cap E(\Gamma')$; in this case, if $(\E,D)$ is a pseudo-divisor on $\Gamma$, we define the pseudo-divisor $\iota_*(\E,D)$ on  $\Gamma'$ as $\iota_*(\E,D):=(\E',\iota_*^\E(D))$.
	Given pseudo-divisors $(\E,D)$ on $\Gamma$ and $(\E',D')$ on $\Gamma'$, we say that $(\Gamma,\E, D)$ \emph{specializes} to $(\Gamma',\E',D')$ if there exists a specialization $\iota\col\Gamma\to\Gamma'$, such that $\E'\subset \E\cap E(\Gamma')$ and there is a specialization $\iota^\E\col \Gamma^\E\ra\Gamma'^{\E'}$, compatible with $\iota$, such that $\iota^\E_*(D)=(D')$. We denote by $\iota\col (\Gamma,\E,D)\ra (\Gamma',\E',D')$ such a specialization. \par

	Let $\overrightarrow{\Gamma}$ be a digraph. A \emph{directed path} on $\overrightarrow{\Gamma}$ is a sequence $v_1, e_1, v_2,\ldots, e_n, v_{n+1}$ such that $s(e_i)=v_i$ and $t(e_i)=v_{i+1}$ for every $i=1,\dots,n$; a \emph{directed cycle} on $\overrightarrow{\Gamma}$ is a directed path on $\overrightarrow{\Gamma}$ such that $v_{n+1}=v_1$.  A \emph{cycle} on $\overrightarrow{\Gamma}$ is just a cycle on $\Gamma$.  \par
	
A \emph{source} (respectively, \emph{sink}) of $\overrightarrow{\Gamma}$ is a vertex in $V(\overrightarrow{\Gamma})$ such that $t(e)\neq v$ (respectively, $s(e)\neq v$) for every $e\in E(\overrightarrow{\Gamma})$. We say that $\overrightarrow{\Gamma}$ is {\emph{acyclic} if it has no directed cycles. It is a well known result that every acyclic (finite) digraph has at least a source and a sink. A \emph{flow} on $\overrightarrow{\Gamma}$ is a function $\phi \col E(\overrightarrow{\Gamma})\to \mathbb{Z}_{\geq 0}$. We say that $\phi$ is \emph{acyclic} if the digraph $\ora{\Gamma}/S$ is acyclic, where $S:=\{e\in E(\ora{\Gamma});\phi(e)=0\}$.  Moreover, we say that a flow $\phi$ is \emph{positive} if $\phi(e)>0$ for all $e\in E(\ora{\Gamma})$. Abusing terminology, we will say that a flow $\phi$ on a graph $\Gamma$ is a pair $(\ora{\Gamma},\phi)$ of an orientation on $\Gamma$ and a flow $\phi$ on $\ora{\Gamma}$.\par

	Given a graph $\Gamma$ and a ring $A$, we define 
\[
C_0(\Gamma,A):=\bigoplus_{v\in V(\Gamma)}A\cdot v\quad\text{and}\quad C_1(\Gamma,A):=\bigoplus_{e\in E(\Gamma)}A\cdot e.
\]
Fix a orientation on $\Gamma$, i.e., choose a digraph $\ora{\Gamma}$ with $\Gamma$ as underlying graph. We define the differential operator $d\col C_0(\ora{\Gamma},A)\to C_1(\ora{\Gamma}, A)$ as the linear operator taking a generator $v$ of $C_0(\ora{\Gamma},A)$ to
\[
d(v):=\underset{t(e)=v}{\sum_{e\in{E(\ora{\Gamma})}}}e-\underset{s(e)=v}{\sum_{e\in{E(\ora{\Gamma})}}}e.
\]
The adjoint of $d$ is the linear operator $d^*\col C_1(\ora{\Gamma},A)\to C_0(\ora{\Gamma},A)$ taking a generator $e$ of $C_1(\ora{\Gamma},A)$ to
\[
d^*(e):=t(e)-s(e).
\]
The space of $1$-cycles of $\ora{\Gamma}$ (over $A$) is defined as $H_1(\ora{\Gamma},A):=\ker(d^*)$.\par
We have a natural identification between $C_0(\Gamma,\mathbb{Z})$ and $\Div(\Gamma)$. Note that the composition $d^*d\col C_0(\Gamma,A)\to C_0(\Gamma,A)$ does not depend on the choice of the orientation. We define the group of principal divisors on $\Gamma$ as the subgroup $\Prin(\Gamma):=\Im(d^*d)$ of $\Div(\Gamma)$. Given $D,D'\in \Div(\Gamma)$, we say that $D$ is \emph{equivalent} to $D'$ if $D-D'\in \Prin(\Gamma)$.\par
  
  Given a flow $\phi$ on $\ora{\Gamma}$, we define the divisor associated to $\phi$ as the image $\div(\phi)=d^*(\phi)$, where $\phi$ is seen as an element of $C_1(\ora{\Gamma},\mathbb{Z})$. \par

	A \emph{(vertex) weighted graph} is a pair $(\Gamma,w)$, where $\Gamma$ is a connected graph and $w$ is a function $w\colon V(\Gamma)\to\mathbb{Z}_{\geq0}$, called the \emph{weight function}. The genus of a weighted graph $(\Gamma,w)$ is defined as $g(\Gamma):=\sum_{v\in V(\Gamma)} w(v)+b_1(\Gamma)$, where $b_1(\Gamma)$ is the first Betti number of $\Gamma$. \par
	A \emph{graph with legs} indexed by the finite set $L$ (the set of legs) is the data of a graph $\Gamma$ and a map $\text{leg}_\Gamma\col L\to V(\Gamma)$. Usually, we will simply write $\Gamma$ for a graph with legs and we denote by $L(\Gamma)$ its set of legs. We denote by $L(v)$ the set of legs incident to $v$, i.e., $L(v):=\text{leg}_\Gamma^{-1}(v)$. 	A \emph{graph with $n$ legs} is a  graph with legs $\Gamma$ such that $L(\Gamma)=I_n:=\{0,1,\ldots,n-1\}$. If $\Gamma$ is a graph with $n$ legs, we will always set $v_0:=\text{leg}_\Gamma(0)\in V(\Gamma)$. We will denote by $(\Gamma,v_0)$ a graph with $1$ leg. 
	
	If $(\Gamma,w,\text{leg}_{\Gamma})$ and $(\Gamma',w',\text{leg}_{\Gamma'})$ are weighted graphs with legs, we say that a specialization $\iota\col\Gamma\ra\Gamma'$ is a \emph{specialization of weighted graphs with legs} if, for every $v'\in V(\Gamma')$, we have  $w'(v')=g(\iota^{-1}(v))$, and $\text{leg}_{\Gamma}=\text{leg}_{\Gamma'}\circ \iota^V$.

	A weighted graph with $n$ legs $\Gamma$ is \emph{stable} if $\val(v)+2w(v)+|L(v)|\geq3$ for every vertex $v\in V(\Gamma)$. 
A \emph{tree} is a connected graph whose first Betti number is $0$ or, equivalently, a connected graph whose number of edges is equal to the number of its vertices minus one. A \emph{tree-like} graph is a graph that becomes a tree after contracting all the loops.
 		A \emph{spanning tree} $T$ of a graph $\Gamma$ is a subset $T\subset E(\Gamma)$ such that $\Gamma_{E(\Gamma)\setminus T}$ is a tree.  Note that, since a tree is connected, it follows that $\Gamma_{E(\Gamma)\setminus T}$ is connected, and hence every vertex of $\Gamma$ must be incident to an edge in $T$.

\begin{Lem}
\label{lem:tree}
Let $\Gamma$ be a connected graph. Let $T$ and $T'$ be distinct spanning trees of $\Gamma$. Then, there are spanning trees $T_0,T_1,\ldots, T_n$ of $\Gamma$, such that $T_0=T$, $T_n=T'$ and $|T_{i-1}\cap T_i|=|V(\Gamma)|-2$, for every $i=1,\dots,n$.
\end{Lem}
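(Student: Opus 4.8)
The plan is to argue by induction on the quantity $k := |T\setminus T'|$, which measures how far apart the two spanning trees are. First I would record the standard fact that any spanning tree of $\Gamma$ has exactly $|V(\Gamma)|-1$ edges (a connected acyclic graph on $m$ vertices has $m-1$ edges); in the notation of the excerpt this is just the statement that the tree $\Gamma_{E(\Gamma)\setminus T}$ has edge set $T$ and spans $V(\Gamma)$. Consequently $|T|=|T'|=|V(\Gamma)|-1$, so $|T\setminus T'|=|T'\setminus T|=k$, and the target condition $|T_{i-1}\cap T_i|=|V(\Gamma)|-2$ is equivalent to saying that $T_i$ is obtained from $T_{i-1}$ by deleting one edge and inserting another; that is, consecutive trees differ by a single edge swap.

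The heart of the argument is one exchange step. Given $T\neq T'$, I would pick an edge $e\in T'\setminus T$. Adding $e$ to the tree $T$ produces $T\cup\{e\}$, which contains a unique cycle $C$, the \emph{fundamental cycle} of $e$ with respect to $T$; concretely $C$ consists of $e$ together with the unique path in $T$ joining the endpoints of $e$ (note $e$ is not a loop, since $e\in T'$ and $T'$ is acyclic). I would then observe that $C$ must contain some edge $f\in T\setminus T'$: indeed $C\setminus\{e\}\subseteq T$, and if all of these edges lay in $T'$ then, since $e\in T'$ as well, the whole cycle $C$ would be contained in $T'$, contradicting that $T'$ is acyclic. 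Setting $T'':=(T\setminus\{f\})\cup\{e\}$, deleting the edge $f$ lying on the cycle $C$ keeps $T\cup\{e\}$ connected and destroys its only cycle, so $T''$ is again a spanning tree, and by construction $|T''\cap T|=|V(\Gamma)|-2$.

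Next I would verify that this single swap strictly decreases the distance to $T'$. Since $f\in T\setminus T'$ was removed and $e\in T'\setminus T$ was added, one gets $T''\setminus T'=(T\setminus T')\setminus\{f\}$, whence $|T''\setminus T'|=k-1$. The base case $k=1$ is immediate: $T$ and $T'$ already differ by a single edge, so we take $T_0=T$, $T_1=T'$. In the inductive step I apply the hypothesis to the pair $(T'',T')$ to obtain a chain $T''=T_1,T_2,\dots,T_n=T'$ of spanning trees with consecutive single-edge differences, and then prepend $T_0=T$, which differs from $T_1=T''$ by exactly one edge. This yields the required sequence.

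The step I expect to require the most care is checking that $T''$ is genuinely a spanning tree, i.e.\ that removing $f$ from $T\cup\{e\}$ leaves a connected, acyclic, spanning subgraph on the correct number of edges. This is where the uniqueness of the fundamental cycle, and the fact that $f$ lies on that cycle, are essential; everything else is bookkeeping with cardinalities and follows the standard matroid basis-exchange pattern.
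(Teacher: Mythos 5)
Your proof is correct and follows essentially the same route as the paper's: pick $e\in T'\setminus T$, locate an edge of $T\setminus T'$ on the fundamental cycle of $e$ with respect to $T$ (the paper argues its existence by noting that $T\cap T'\cup\{e\}\subseteq T'$ is acyclic, which is the same observation as yours), perform the swap, and iterate. Your version merely makes the iteration explicit as an induction on $|T\setminus T'|$, which the paper leaves as "iterating the reasoning."
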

\begin{proof}
The idea is to take an edge $e$ in $T'\setminus T$ and construct a spanning tree $T_1$ by removing an edge $e'$ from $T\setminus T'$ and adding $e$. It is clear that $|T\cap T_1|=|T|-1=|V(\Gamma)|-2$ and the result will follow by iterating the reasoning.\par
Set $S:=T\cap T'$ and consider $e\in T'\setminus T$. Note that $e$ is not a loop. Hence, there is a unique cycle $\gamma$ on $\Gamma$ such that $e\in E(\gamma)$ and $\emptyset\ne E(\gamma)\setminus \{e\}\subset T$. Note that $\Gamma_{E(\Gamma)\setminus (S\cup\{e\})}$ does not have cycles, hence $E(\gamma)\setminus\{e\}$ is not contained in $S$. If we choose $e'$ as any edge in $E(\gamma)\setminus (S\cup \{e\})$, it follows that $(T\cup\{e\})\setminus \{e'\}$ is a spanning tree.  
\end{proof}

    The category whose objects are weighted graphs with $n$ legs of genus $g$ and whose morphisms are specializations is denoted by $\Graph_{g,n}$. The poset $\grap_{g,n}$ will be the set whose elements are isomorphism classes of weighted graphs with $n$ legs of genus $g$, where the partial order is given by specialization.

\subsection{Partially ordered sets}
\label{sub:tropicalcurves}
  A \emph{poset} (\emph{partially ordered set}) is a pair $(S,\leq)$ where $S$ is a set and $\leq$ is a partial order on $S$. In this paper we will only consider finite posets. A function $f\col S\to S'$ is called \emph{order preserving} if $x\leq y$ implies $f(x)\leq f(y)$. A subset $T\subset S$ is called a \emph{lower set} (respectively, \emph{upper set}) if $x\leq y$ (respectively, $y\leq x$) and $y\in T$ implies that $x\in T$. A poset $S$ can be given two natural topologies, one where the closed sets are the lower sets, and the other where the closed sets are the upper sets. In this paper we choose the topology induced by the lower sets.\par
	A function $f\col S\to S'$ between posets is continuous if and only if it is order preserving. Moreover, a continuous function $f$ is closed if for every $y_1, y_2\in S'$ with $y_1\leq y_2$ and $y_2=f(x_2)$, there exists $x_1\in S$ such that $x_1\le x_2$ and $f(x_1)=y_1$.\par
Let $S$ be a poset. A \emph{chain} in $S$ is a sequence $x_0<x_1<\ldots < x_n$. We call $n$ the \emph{length} of the chain, and we say that $x_0$ (respectively, $x_n$) is the \emph{starting point} (respectively, \emph{ending point}) of the chain. We say that $S$ is \emph{ranked} (of length $n$) if every maximal chain has length $n$. It is easy to see that the maximal length of chains in $S$ is precisely the Krull dimension of $S$ as a topological space. Therefore, if $S$ is ranked then $S$ will be pure dimensional. 
For every $x\in S$ we define the \emph{dimension} of $x$ as the maximum length for a chain ending in $x$; in other words, the dimension of $x$ is precisely the Krull dimension of $\overline{\{x\}}$. If $S$ is a ranked poset of length $n$, then we define the \emph{codimension} of $x\in S$ as $n-\dim_S(x)$, i.e., the length of all maximal chains starting from $x$. A poset $S$ is \emph{graded} if it has a function $\text{rk}\col S\to \mathbb{N}$, called \emph{rank function}, such that $\text{rk}(x)=\text{rk}(y)+1$ whenever $y<x$ and there is no $z\in S$ with $y<z<x$. Every ranked poset is graded with rank function given by the dimension.\par
	
Let $S$ be a ranked poset $S$. We say that $S$ is \emph{connected in codimension one} if for every maximal elements $y,y'\in S$ there are two sequences of elements $x_1,\ldots, x_n\in S$ and $y_0,\ldots, y_n\in S$ such that 
	\begin{enumerate}[label=(\roman*)]
	\item $x_i$ has codimension $1$ for every $i=1,\ldots, n$.
	\item $y_i$ is maximal for every $i=0,\ldots,n$.
	\item $y_0=y$ and $y_n=y'$.
	\item $x_{i+1}< y_i$ for every $i=0,\ldots,n-1$ and $x_i<y_i$ for every $i=1,\ldots, n$.
	\end{enumerate}
We call these sequences a \emph{path in codimension $1$} from $y$ to $y'$.	

\section{Tropical curves and their moduli}\label{sec:trop}
\subsection{Tropical curves}
 A \emph{metric graph} is a pair $(\Gamma,\l)$ where $\Gamma$ is a graph and $\l$ is a function $\l\col E(\Gamma)\to \mathbb{R}_{>0}$ called the length function. If $\ora{\Gamma}$ is an orientation on $\Gamma$, we define the \emph{tropical curve} $X$ associated to $(\ora{\Gamma},\ell)$ as
\[
X=\frac{\left(\bigcup_{e\in E(\ora{\Gamma})}I_e\cup V(\ora{\Gamma})\right)}{\sim}
\]
where $I_e=[0,\l(e)]\times\{e\}$ and $\sim$ is the equivalence relation generated by $(0,e)\sim s(e)$  and $(\l(e),e)\sim t(e)$. The tropical curve $X$ has a natural topology and its connected components have a natural structure of metric space. Note that the definition of $X$ does not depend on the chosen orientation. For two points $p,q\in I_e$ we denote by $\overline{pq}$ (respectively, $\overrightarrow{pq}$) the interval (respectively, oriented interval) $[p,q]\subset I_e$.\par

  We say that the tropical curves $X$ and $Y$ are \emph{isomorphic} if there is a bijection between $X$ and $Y$ that is an isometry over each connected component of $X$. We say that $\Gamma$ (respectively, $\overrightarrow{\Gamma}$) is a  \emph{model} (respectively, \emph{directed model}) of $X$ if there exists a length function $\l$ on $\Gamma$ such that $X$ and the tropical curve associated to $(\Gamma,\l)$ are isomorphic. Sometimes, we will use interchangeably the notion of tropical curve and of its isomorphism class. If $X$ and $Y$ are tropical curves and $\Gamma_X$ and $\Gamma_Y$ are models for $X$ and $Y$, we say that the pairs $(X,\Gamma_X)$ and $(Y,\Gamma_Y)$ are \emph{isomorphic} if there exists an isomorphism $f\col X\to Y$ of tropical curves such that $f(V(\Gamma_X))=V(\Gamma_Y)$ and such that $f$ induces an isomorphism of graphs between $\Gamma_X$ and $\Gamma_Y$.  \par
	Let $X$ be a tropical curve. If $X$ is connected, then every model of $X$ is a connected graph. Conversely, if a model of $X$ is connected, then so is $X$. Note that $X$, as topological space, might fail to be pure dimensional: this happens, for instance, if $\Gamma$ has isolated vertices.\par
  We say that a tropical curve $Y$ is a \emph{tropical subcurve} of $X$ if there exists an injection $Y\subset X$ that is an isometry over each connected component of $Y$. In this case, if $\Gamma_Y$ is a model of $Y$, one can choose a model $\Gamma_X$ of $X$ such that $\Gamma_Y$ is a subgraph of $\Gamma_X$. Conversely, if $\Gamma'$ is a subgraph of a model $\Gamma_X$ of $X$, then $\Gamma'$ induces a tropical subcurve $Y$ of $X$. Note that a tropical subcurve can be a single point. If $Y$ and $Z$ are tropical subcurves of $X$ then $Y\cap Z$ and $Y\cup Z$ are also tropical subcurves of $X$. 
  From now on  all tropical curves will be connected, and in particular pure dimensional, while we will allow possibly nonconnected (and hence possibly non pure dimensional) tropical subcurves. \par
	
Let $X$ be a tropical curve with a model $\Gamma_X$, and let $Y\subset X$ be a tropical subcurve of $X$. Then, there exists a minimal refinement $\Gamma_{X,Y}$ of $\Gamma_X$ such that $Y$ is induced by a subgraph $\Gamma_Y$ of $\Gamma_{X,Y}$. We define
\[
\delta_{X,Y}:=\sum_{v\in V(\Gamma_Y)}\val_{E(\Gamma_{X,Y})\setminus E(\Gamma_Y)}(v)
\]
The definition of $\delta_{X,Y}$ does not depend on the choice of the model $\Gamma_X$ of $X$. 	When no confusion may arise we will simply write $\delta_Y$ instead of $\delta_{X,Y}$.

	We also define the set 
\[
\out(Y):=E(V(\Gamma_Y),V(\Gamma_{X,Y})\setminus V(\Gamma_Y)).
\]
Equivalently, the set $\out(Y)$ is the cut in $\Gamma_{X,Y}$ induced by $V(\Gamma_Y)$. The definition of the set $\out(Y)$ depends on the choice of the model $\Gamma_X$ (see the next example). \par 
\begin{Exa}
\label{exa:out}
Let $X$ be a tropical curve as in Figure \ref{fig:out}, with model $\Gamma_X$ whose vertices are the points $p_0,p_1,p_2,p_3,p_4$. Let $Y$ be the subcurve of $X$, given by 
\[
Y:=\overline{p_0q_0}\cup \overline{p_0q_1}\cup \overline{p_0q_3}\cup \overline{p_3q_4}\cup \{q_2\}.
\]
Then we have
\[
\out(Y)=\{\overline{q_0p_2},\overline{q_2p_1},\overline{q_3p_1},\overline{q_4p_4}\}.
\]
Note that the edge $\overline{q_1q_2}$ does not belong to $\out(Y)$. If we choose a model for $X$ with vertices the points $p_0,p_1,p_3,p_4$, then the edge $\overline{q_0p_1}$ will be in $\out(Y)$ instead of $\overline{q_0p_2}$. On the other hand, if we choose a model with vertices $p_0,p_1,p_2,p_3,p_4,p_5$, for some $p_5$ in the interior of $\overline{q_1q_2}$, then the edges $\overline{q_1p_5}$ and $\overline{q_2p_5}$ will be in $\out(Y)$.
\begin{figure}[h]
\begin{tikzpicture}[scale=5]
\begin{scope}[shift={(0,0)}]
\draw (0,0) to [out=30, in=150] (1,0);
\draw (0,0) to (1,0);
\draw (0,0) to [out=-30, in=-150] (1,0);
\draw (0.5,-0.144) to (0,-0.344);
\draw[fill] (0,0) circle [radius=0.02];
\node[left] at (0,0) {$p_0$};
\draw[fill] (1,0) circle [radius=0.02];
\node[right] at (1,0) {$p_1$};
\draw[fill] (0.5,0.144) circle [radius=0.02];
\node[above] at (0.5,0.144) {$p_2$};
\draw[fill] (0.5,-0.144) circle [radius=0.02];
\node[below] at (0.5,-0.144) {$p_3$};
\draw[fill] (0,-0.344) circle [radius=0.02];
\node[below] at (0,-0.344) {$p_4$};
\draw[fill] (0.3,0.115) rectangle (0.33,0.145);
\node[above] at (0.315,0.130) {$q_0$};
\draw[fill] (0.3,-0.015) rectangle (0.33,0.015);
\node[below] at (0.315,0) {$q_1$};
\draw[fill] (0.67,-0.015) rectangle (0.7,0.015);
\node[below] at (0.685,0) {$q_2$};
\draw[fill] (0.67,-0.145) rectangle (0.7,-0.115);
\node[below] at (0.685,-0.130) {$q_3$};
\draw[fill] (0.3,-0.205) rectangle (0.33,-0.235);
\node[below] at (0.315,-0.220) {$q_4$};
\end{scope}
\end{tikzpicture}
\caption{An example of the set $\out(Y)$.}
\label{fig:out}
\end{figure}
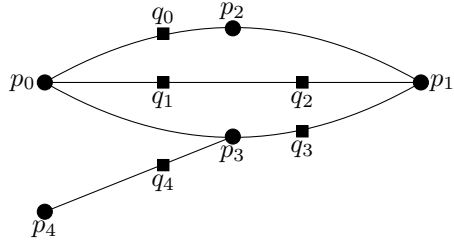
\end{Exa}

  Let $X$ be a tropical curve, $\Gamma_X$ a model of $X$, and $\l$ the induced metric on $\Gamma_X$. A specialization $\iota\col\Gamma_X\ra\Gamma'$ induces a metric $\l'$ on $\Gamma'$ defined as
\[
\l'\colon E(\Gamma')\stackrel{\iota_*}{\hookrightarrow} E(\Gamma)\stackrel{\l}{\to} \R_{>0}.
\]
Let $Y$ be the tropical curve associated to $(\Gamma',\l')$. Then there exists an induced function $\iota\col X\to Y$ that is constant on the edges of $\Gamma_X$ contracted by $\iota$. We call this function a \emph{specialization of $X$ to $Y$}.\par

   Let $X$ be a tropical curve and $(\ora{\Gamma},\l)$ be a directed model of $X$. Given $e\in E(\ora{\Gamma})$  and $a\in\R$ with $0\le a\le\l(e)$, we let $p_{a,e}$ be the point of $X$ lying on $e$ whose distance from $s(e)$ is $a$. A \emph{divisor} on $X$ is a map $\D\col X\to \mathbb{Z}$ such that $\D(p)\neq0$ for finitely many points $p\in X$. We define the \emph{support} of $\D$ as the set of points $p$ of $X$ such that $\D(p)\neq0$ and denote it by $\supp(\D)$.  

If $\Gamma$ is a model of $X$, then every divisor on $\Gamma$ can be seen as a divisor on $X$. Given a divisor $\D$ on $X$, the degree of $\D$ is the integer $\deg \D:=\sum_{p\in X} \D(p)$. We say that $\D$ is \emph{effective} if $\D(p)\ge0$ for every $p\in X$.  We let $\Div(X)$ be the Abelian group of divisors on $X$; we denote by $\Div^d(X)$ the subset of degree-$d$ divisors on $X$. If $X$ and $Y$ are tropical curves and if $\D$ and $\D'$ are divisors on $X$ and $Y$, respectively, we say that the pairs $(X,\D)$ and $(Y, \D')$ are \emph{isomorphic} if there is an isomorphisms $f\col X\to Y$ of tropical curves such that $\D(p)=\D'(f(p))$ for every $p\in X$.\par 
   A \emph{rational function} on $X$ is a continuous, piecewise linear function $f\col X\ra \R$ with integer slopes. We say that a rational function $f$ on $\Gamma$ has \emph{slope $s$ over} $\ora{p_{e,a_1},p_{e,a_2}}$, for $a_1,a_2\in\R$ and $0\leq a_1< a_2\leq \ell(e)$, if the restriction of $f$ to the locus of points $p_{a,e}$ for $a_1\leq a \leq a_2$ is linear and has slope $s$.
	A \emph{principal divisor} on $\Gamma$ is a divisor 
\[
\div_X(f):=\sum_{p\in X} \ord_p(f) p\in \Div(X),
\]
where $f$ is a rational function on $X$ and $\ord_p(f)$ is the sum of the incoming slopes of $f$ at $p$. A principal divisor has degree zero. The \emph{support} of a rational function $f$ on $X$ is defined as the set $\supp(f)=\{p\in X;\ord_p(f)\neq 0\}$. We denote by $\Prin(X)$ the subgroup of $\Div(X)$ of principal divisors. Given divisors $\D_1,\D_2\in\Div(X)$, we say that $\D_1$ and $\D_2$ are \emph{equivalent} if $\D_1-\D_2\in \Prin(X)$. The \emph{Picard group} $\Pic(X)$ of $X$ is
\[
\Pic(X):=\Div(X)/\Prin(X).
\] 
The \emph{degree-$d$ Picard group} of $X$ is defined as $\Pic^d(X):=\Div^d(X)/\Prin(X)$.\par
 
	Let $f$ be a rational function on the tropical curve $X$ and $\Gamma$ be a model of $X$ such that $\supp(f)\subset V(\Gamma)$. Then $f$ is linear over each edge of $\Gamma$. If $f$ is nowhere constant, then it induces an orientation $\ora{\Gamma}$ on $\Gamma$, such that $f$ has always positive  slopes. In this case, we can define a positive flow $\phi_f$ on $\ora{\Gamma}$ where $\phi_f(e)$ is equal to the slope of $f$ over $e$, for every $e\in E(\ora\Gamma)$. It is clear that $\div_X(f)=\div(\phi_f)$, where $\div(\phi_f)$ is seen as a divisor on $X$. Note that the orientation $\ora{\Gamma}$ on $\Gamma$ induced by a rational function $f$ is acyclic, because there are no strictly increasing functions on the circle $S^1$. If $f$ is constant on a subset $\E\subset E(\Gamma)$, then we can contract all edges in $\E$ and get a specialization $\iota\col X\to Y$ of tropical curves. Clearly, $f$ induces a nowhere constant rational function on $Y$. Hence $f$ induces an acyclic orientation $\ora{\Gamma/\E}$ on $\Gamma/\E$ and a positive flow $\phi_f$ on $\Gamma/\E$.\par
  
	Fix a model $\Gamma_X$ for the tropical curve $X$. For every tropical subcurve $Y\subset X$ and for every $\l\in\R_{\geq0}$ such that $\l\leq \min_{e\in\out(Y)}\{\l(e)\}$, we define the divisor $\D_{Y,\l}$ on $X$ taking $p\in X$ to
\begin{equation}
\label{eq:chip}
\D_{Y,\l}(p)=\begin{cases}
             \begin{array}{ll}
						  -|\{e\in \out(Y); p=p_{e,0}\}| &\text{if $p\in Y$}; \\
							|\{e\in \out(Y); p=p_{e,\l}\}| &\text{if $p\notin Y$},
							\end{array}
							\end{cases}
\end{equation}							
where we consider the edges $e\in\out(Y)$ oriented in the direction away from $Y$. The divisor $\D_{Y,\l}$ is principal, since we have $\D_{Y,\l}=\div_X(f)$ where $f$ has slope $1$ over $\overrightarrow{p_{e,0}p_{e,\l}}$ for every edge $e\in\out(Y)$ and has slope $0$ everywhere else. Note that $f$ is well defined because the edges in $\out(Y)$ form a cut of $\Gamma_{X,Y}$, while the definition of $\D_{Y,\l}$ depends on the choice of the model $\Gamma_X$ of $X$. We call $\D_{Y,\l}$ a \emph{chip-firing divisor emanating from $Y$ with length $\l$}.
 
\begin{Exa}
\label{exa:chip}
We maintain the definitions in Example \ref{exa:out}, where the model $\Gamma_X$ has vertices $p_0,p_1,p_2,p_3,p_4$. Assume that the edges $\overline{q_0p_2}$, $\overline{q_2q_5}$, $\overline{q_3p_1}$ and $\overline{q_4q_6}$ have all the same length $\l$. Hence 
\[
\D_{Y,\l}=(p_2-q_0)+(q_5-q_2)+(p_1-q_3)+(q_6-q_4).
\]
\begin{figure}[h]
\begin{tikzpicture}[scale=5]
\begin{scope}[shift={(0,0)}]
\draw (0,0) to [out=30, in=150] (1,0);
\draw (0,0) to (1,0);
\draw (0,0) to [out=-30, in=-150] (1,0);
\draw (0.5,-0.144) to (0,-0.344);
\draw[fill] (0,0) circle [radius=0.02];
\node[left] at (0,0) {$p_0$};
\draw[fill] (1,0) circle [radius=0.02];
\node[right] at (1,0) {$p_1$};
\draw[fill] (0.5,0.144) circle [radius=0.02];
\node[above] at (0.5,0.144) {$p_2$};
\draw[fill] (0.5,-0.144) circle [radius=0.02];
\node[below] at (0.5,-0.144) {$p_3$};
\draw[fill] (0,-0.344) circle [radius=0.02];
\node[below] at (0,-0.344) {$p_4$};
\draw[fill] (0.3,0.115) rectangle (0.33,0.145);
\node[above] at (0.315,0.130) {$q_0$};
\draw[fill] (0.3,-0.015) rectangle (0.33,0.015);
\node[below] at (0.315,0) {$q_1$};
\draw[fill] (0.67,-0.015) rectangle (0.7,0.015);
\node[below] at (0.685,0) {$q_2$};
\draw[fill] (0.84,-0.015) rectangle (0.87,0.015);
\node[below] at (0.855,0) {$q_5$};
\draw[fill] (0.67,-0.145) rectangle (0.7,-0.115);
\node[below] at (0.685,-0.130) {$q_3$};
\draw[fill] (0.3,-0.205) rectangle (0.33,-0.235);
\node[below] at (0.315,-0.220) {$q_4$};
\draw[fill] (0.15,-0.265) rectangle (0.18,-0.295);
\node[below] at (0.165,-0.280) {$q_6$};
\end{scope}
\end{tikzpicture}
\caption{An example of a chip firing divisor.}
\label{fig:chip}
\end{figure}
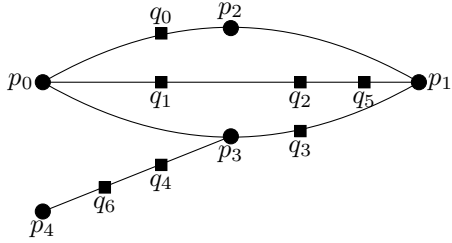

\end{Exa}

\begin{Lem}
\label{lem:valP}
If $f$ is a nowhere constant rational function on a tropical curve $X$, then there exists a point $p$ in $X$ such that $\ord_p(f)\geq\delta_{X,p}$.
\end{Lem}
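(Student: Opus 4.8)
The plan is to use the structure that $f$ induces, exactly as recorded in the discussion preceding the statement: after fixing a model $\Gamma$ of $X$ with $\supp(f)\subset V(\Gamma)$, the function $f$ is linear on each edge, and since $f$ is nowhere constant it induces an orientation $\ora{\Gamma}$ along which $f$ has positive slopes, together with a positive flow $\phi_f$ with $\phi_f(e)=$ slope of $f$ over $e$. Because the slopes are integers and positive, $\phi_f(e)\geq 1$ for every $e\in E(\ora{\Gamma})$, and the induced orientation $\ora{\Gamma}$ is acyclic. The whole argument then reduces to locating the desired point $p$ as a \emph{sink} of $\ora{\Gamma}$.

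First I would invoke the standard fact (recalled in the text) that every finite acyclic digraph has a sink, and let $p$ be one. By the definition of a sink, no edge of $\ora{\Gamma}$ has $p$ as its source, so every edge incident to $p$ is oriented toward $p$; in particular $p$ carries no loop, since a loop at $p$ would have $p$ as both source and target. Consequently, along each edge $e$ incident to $p$ the slope of $f$ measured in the incoming direction is exactly $\phi_f(e)\geq 1$. Summing the incoming slopes over all edges at $p$ therefore gives
\[
\ord_p(f)=\sum_{e\ni p}\phi_f(e)\geq \val_\Gamma(p).
\]

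It remains to identify $\val_\Gamma(p)$ with $\delta_{X,p}$. Since $p$ is a vertex of the model $\Gamma$ and $\delta_{X,Y}$ is model-independent, one may compute $\delta_{X,p}$ in $\Gamma$ itself: for the one-point subcurve $Y=\{p\}$ the subgraph $\Gamma_Y$ has vertex set $\{p\}$ and no edges, so $\delta_{X,p}=\val_{\Gamma}(p)$. Combining this with the previous inequality yields $\ord_p(f)\geq \delta_{X,p}$, as desired.

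I expect no serious obstacle here: the decisive observation is simply that a sink of the acyclic orientation attached to $f$ is the point we want. The only points requiring a little care are the sign/convention bookkeeping that identifies $\ord_p(f)$ at the sink with the sum $\sum_{e\ni p}\phi_f(e)$ of positive flow values (all incoming slopes being positive there), and the model-independence identity $\delta_{X,p}=\val_\Gamma(p)$; both are routine given the framework established above.
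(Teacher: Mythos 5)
Your proof is correct and follows exactly the paper's own argument: the paper likewise takes the positive acyclic flow $\phi_f$ induced by $f$ on a directed model and chooses $p$ to be a sink. Your write-up simply makes explicit the bookkeeping (existence of a sink, absence of loops at $p$, and the identification $\delta_{X,p}=\val_\Gamma(p)$) that the paper leaves implicit.
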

\begin{proof}
The rational function $f$ induces a positive flow on a directed model $\ora{\Gamma_X}$ of $X$. The result follows from choosing $p$ to be a sink of $\ora{\Gamma_X}$.
\end{proof}
\begin{Rem}
\label{rem:uniquef}
Let $f$ and $f'$ be rational functions on a tropical curve $X$. Then $\div(f)=\div(f')$ if and only if $f-f'$ is a constant function. Indeed, it suffices to show that if $\div(f)=0$, then $f$ is constant. This follows by contracting the edges where $f$ is constant: if the resulting tropical curve is not a point, then using Lemma \ref{lem:valP} we see that $\div(f)\ne 0$.
\end{Rem}

  Let $X$ be a tropical curve. Given a directed model $\ora{\Gamma_X}$ of $X$, a \emph{$1$-form}  on $X$ is a formal sum $\omega=\sum_{e\in E(\ora{\Gamma_X})}\omega_e\cdot de$, where $\omega_e\in\R$. We say that $\omega$ is \emph{harmonic} if for every $v\in V(\ora{\Gamma_X})$, 
\[
\underset{s(e)=v}{\sum_{e\in E(\ora{\Gamma_X})}}\omega_e=\underset{t(e)=v}{\sum_{e\in E(\ora{\Gamma_X})}}\omega_e.
\]
Let $\Omega(X)$ be the real vector space of harmonic $1$-forms and $\Omega(X)^\vee$ be its dual. Note that $\Omega(X)$ does not depend on the choice of $\ora{\Gamma_X}$. Given edges $e,e'\in E(\ora{\Gamma_X})$ and points $p_{e',a_1},p_{e',a_2}\in e'$ for $a_1,a_2\in\mathbb R$, we define the \emph{integration of $de$ over} $\overrightarrow{p_{e',a_1}p_{e',a_2}}$ as
\[
\int_{p_{e',a_1}}^{p_{e',a_2}}de=\begin{cases}
                                 \begin{array}{ll}
														      a_2-a_1,&\text{ if $e'=e$;}\\
																	0,&\text{ otherwise.}
																	\end{array}
																	\end{cases}
\]
There is a natural isomorphism (see \cite[Lemma 2.1]{BF})
\begin{align*}
H_1(\Gamma_X,\R)&\to \Omega(X)^\vee\\
  \gamma&\mapsto \int_\gamma\col
\end{align*}
and hence $H_1(\Gamma_X,\mathbb{Z})$ can be viewed as a lattice in $\Omega(X)^\vee$. The \emph{Jacobian} of the tropical curve $X$ is defined as the real torus 
\begin{equation}\label{eq:Jtropdef}
J^{\trop}(X)=\Omega(X)^\vee/H_1(\Gamma_X,\mathbb{Z}).
\end{equation}

Note that the definition of the Jacobian does not depend on the chosen directed model of $X$.
	Fix a point $p_0$ in $X$ and assume that $p_0$ is a vertex of $\ora{\Gamma_X}$. Let $p_{e_1,a_1},\ldots,p_{e_d,a_d}$ be points of $X$. Choose a path $\gamma_i$ on $\ora{\Gamma_X}$ from $p_0$ to $s(e_i)$ for every $i=1,\dots,d$.  One can define a map
\begin{align*}
\alpha\col X^d&\longrightarrow \Omega(X)^\vee\\
    (p_{e_i,a_i})_{i=1,\ldots,d}&\longmapsto \sum_{i=1}^{d} \left(\int_{\gamma_i}+\int_{s(e_i)}^{p_{e_i,a_i}}\right).
\end{align*}	
		
The \emph{degree-$d$ Abel-Jacobi map} of the tropical curve $X$ is the composition of $\alpha$ with the quotient map $\Omega(X)^\vee\to J^\trop(X)$. Note that, while the map $\alpha$ may depend on the choices of the paths $\gamma_1,\dots,\gamma_d$, the degree-$d$ Abel-Jacobi map does not.

  An \emph{$n$-pointed tropical curve} is a pair $(X,\text{leg})$ where $\text{leg}\col I_n=\{0,\dots,n-1\} \to X$ is a function. For every $p\in X$, we set $\l(p):=|\text{leg}^{-1}(p)|$. A \emph{weight} on a tropical curve $X$ is a function $w\col X\to \mathbb{Z}_{\geq0}$, such that $w(p)\neq 0$ only for finitely many $p\in X$. Given an $n$-pointed weighted tropical curve $X$, we define 
\[
V(X):=\{p\in X; \text{ either }\delta_{X,p}\neq 2,\text{or }\l(p)\geq1,\text{or }w(p)\geq 1\}.
\]

Note that $V(X)\subset V(\Gamma)$ for every model $\Gamma$ of $X$; if $V(X)=V(\Gamma)$, we say that $\Gamma$ is a \emph{minimal model} of $X$. We say that an $n$-pointed weighted tropical curve $(X,w,\text{leg})$ is \emph{stable} if $\delta_{X,p}+2w(p)+\l(p)\ge 3$ for every point $p\in X$ such that $\delta_{X,p}\leq1$.
The genus $g(X,w)$ of a weighted tropical curve $(X,w)$ is defined by the formula
\[
2g(X,w)-2:=\sum_{p\in X}(2w(p)-2+\delta_{X,p}).
\]
The sum on right hand side of the last formula is finite, since $(\delta_{X,p}, w(p))\neq (2,0)$ only for finitely many $p\in X$.

If $(X,w,\text{leg})$ is a stable $n$-pointed weighted tropical curve, then there exists a unique stable weighted graph with $n$ legs $\Gamma_{st}$ that is a model of $X$ with induced weights and legs  satisfying the following property:  for every $p\in X$ such that either $w(p)\neq0$ or $\l(p)\neq0$, then $p\in V(\Gamma_{st})$. We call $\Gamma_{st}$ the \emph{stable model} of $X$.\par
 For the remainder of the paper we will usually omit to denote the weight $w$ of a weighted tropical curve.

\subsection{Tropical moduli spaces}
 Given a finite set $S\subset \R^n$ we define 
\[
\cone(S):=\left\{\sum_{s\in S}\lambda_ss|\lambda_s\in \mathbb R_{\ge0}\right\}.
\]
A subset $\sigma\subset \R^n$ is called a \emph{polyhedral cone} if $\sigma=\cone(S)$ for some finite set $S\subset \mathbb{R}^n$. If there exists $S\subset \mathbb{Z}^n$ with $\sigma=\cone(S)$ then $\sigma$ is called \emph{rational}.\par
   Every polyhedral cone is the intersection of finitely many closed half-spaces. The \emph{dimension} of $\sigma$, denoted $\dim(\sigma)$, is the dimension of the minimal linear subspace containing $\sigma$.
The \emph{relative interior} $\sigma^\circ$ is the interior of $\sigma$ inside this linear subspace. A \emph{face} of $\sigma$ is the intersection of $\sigma$ with some linear subspace $H\subset \mathbb{R}^n$ of codimension one such that $\sigma$ is contained in one of the closed half-spaces determined by $H$. A face of $\sigma$ is also a polyhedral cone. If $\tau$ is a face of $\sigma$ then we write $\tau\prec \sigma$. In the sequel we will use the terminology \emph{cone} to mean \emph{rational polyhedral cone}. \par
	 A morphism $f\col \tau\to\sigma$ between cones $\tau\subset \R^m$ and $\sigma\subset \R^n$ is the restriction to $\tau$ of an integral linear transformation $T\col\R^n\to \R^m$ such that $T(\tau)\subset\sigma$. We say that $f$ is an isomorphism if there exists an inverse morphism $f^{-1}\col\sigma\to\tau$.
	 A morphism $f\col\tau\to\sigma$ is called a \emph{face morphism} if $f$ is an isomorphism between $\tau$ and a (not necessarily proper) face  of $\sigma$.\par
	A \emph{generalized cone complex} $\Sigma$ is the colimit (as topological space) of a finite diagram $\mathbf{D}$ of cones with face morphisms. \par
	We say that $\sigma\in \mathbf{D}$ is \emph{maximal} if there is no proper face morphism $f\col\sigma\to\tau$ in $\mathbf{D}$. We say that $\Sigma$ is \emph{pure dimensional} if all maximal cones in $\mathbf{D}$ have the same dimension. We say that $\Sigma$ is \emph{connected through codimension one} if for every pair $\sigma,\sigma'$ of maximal cones there are two sequences of cones $\tau_1,\ldots,\tau_n\in \mathbf{D}$ and $\sigma_0,\ldots, \sigma_n\in \mathbf{D}$ such that
		\begin{enumerate}[label=(\roman*)]
	\item $\tau_i$ has codimension $1$ for every $i=1,\ldots, n$;
	\item $\sigma_i$ is maximal for every $i=0,\ldots,n$;
	\item $\sigma_0=\sigma$ and $\sigma_n=\sigma'$;
	\item there exists a face morphism $\tau_{i+1}\to \sigma_i$ in $\mathbf{D}$ for every $i=0,\ldots,n-1$, and a face morphism $\tau_i\to\sigma_i$ in $\mathbf{D}$ for every $i=1,\ldots, n$.
	\end{enumerate}\par
	 A \emph{morphism of generalized cone complexes} is a continuous map of topological spaces $f\col\Sigma\to\Sigma'$ such that for every cone $\sigma\in \mathbf{D}$ there exists a cone $\sigma'\in \mathbf{D}'$ such that the induced map $\sigma\to\Sigma'$ factors through a cone morphism $\sigma\to\sigma'$. \par
 A \emph{polyhedron} $\P\subset \R^n$ is an intersection of a finite number of half-spaces of $\R^n$. A face of a polyhedron $\P$ is the intersection of $\P$ and a hyperplane $H$ such that $\P$ is contained in a closed half-space determined by $H$. A morphism $f\col \P\ra \P'$ between polyhedra $\P\subset \R^n$ and $\P'\subset \R^m$ is the restriction to $\P$ of an affine map $T\col\R^n\to\R^m$ such that $T(\P)\subset \P'$. A morphism $f\col \P\to \P'$ of polyhedra is a \emph{face morphism} if the image of $f(\P)$ is a face of $\P'$ and $f$ is an isometry. A \emph{polyhedral complex} is the colimit (as topological space) of a finite poset $\mathbf{D}$ of polyhedra with face morphisms. \par
		
  For a graph $\Gamma$, the open cone $\mathbb{R}_{> 0}^{|E(\Gamma)|}$ parametrizes all  possible choices for the lengths of the edges of $\Gamma$. Hence, $M_\Gamma^{\trop}:=\R^{E(\Gamma)}_{>0}/\Aut(\Gamma)$ parametrizes isomorphism classes of pairs $(X,\Gamma_X)$, where $X$ is a tropical curve and $\Gamma_X$ is a model of $X$ isomorphic to $\Gamma$.  We will identify $E(\Gamma)$ with the canonical basis of $\mathbb{R}^{|E(\Gamma)|}$. If $\iota\col \Gamma\ra \Gamma'$ is a specialization, then there exists an inclusion $\R^{E(\Gamma')}\subset \R^{E(\Gamma)}$ induced by the inclusion $E(\Gamma')\subset E(\Gamma)$. If $\Gamma'$ is a refinement of $\Gamma$, there exists a map 
\begin{equation}
\label{eq:hat}
f\col\R^{E(\Gamma')}\to \R^{E(\Gamma)}
\end{equation}
given by $f((x_{e'})_{{e'}\in E(\Gamma')})=(y_e)_{e\in E(\Gamma)}$, with 
\[
y_e=\underset{b(e')=e}{\sum_{e'\in E(\Gamma')}}x_{e'}.
\]
(Recall that $b\col E(\Gamma')\to E(\Gamma)$ is the surjection induced by the refinement $\Gamma'$.)  \par
		If $X$ is a stable $n$-pointed genus-$g$ tropical curve, then $X$ admits exactly one stable model $\Gamma_X$. Hence the moduli space $M_{g,n}^{\text{trop}}$ of stable $n$-pointed tropical curves of genus $g$ is the generalized cone complex given as the colimit of the diagram whose cones are $\mathbb{R}_{\geq0}^{|E(\Gamma)|}$, where $\Gamma$ runs through all stable genus-$g$ weighted graphs with $n$ legs, with face morphisms specified by specializations. More precisely, if $\Gamma$ specializes to $\Gamma'$, then $\mathbb{R}_{\geq 0}^{|E(\Gamma')|}$ is a face of $\mathbb{R}_{\geq 0}^{|E(\Gamma)|}$ via the inclusion $E(\Gamma')\subset E(\Gamma)$.  For more details about $M_{g,n+1}^{\text{trop}}$ and its compactification $\overline{M}_{g,n+1}^{\textnormal{trop}}$, see \cite[Section 2]{M},  \cite[Sections 2.1 and 3.2]{BMV}, \cite[Section 3]{Caporaso1}, \cite[Section 3]{Caporaso}, and \cite[Section 4]{ACP}.\par

\section{Quasistability on graphs}\label{sec:quasigraph}

In this section we introduce the notion of quasistability for pseudo-divisors on graphs. We will study several properties of the poset formed by these divisors, in particular how it behaves under the operations of contraction and deletion of edges. All graphs will be considered connected unless otherwise specified. 

Let $\Gamma$ be a graph. Let $d$ be an integer. A \emph{degree-$d$ polarization} on $\Gamma$ is a function $\mu\col V(\Gamma)\to\R$ such that $\sum_{v\in V(\Gamma)}\mu(v)=d$. 

Let $\mu$ be a degree-$d$ polarization on $\Gamma$. For every $V\subset V(\Gamma)$ we set $\mu(V):=\sum_{v\in V}\mu(v)$.
Given a degree-$d$ divisor $D$ on $\Gamma$, we define 
\[
\beta_D(V):=\deg(D|_V)-\mu(V)+\frac{\delta_V}{2}.
\] 
If $\iota\col\Gamma\ra\Gamma'$ is a specialization, then there is a induced degree-$d$ polarization $\iota_*(\mu)$ on $\Gamma'$ defined as 
\[
\iota_*(\mu)(v'):=\underset{v \in \iota^{-1}(v')}{\sum_{v\in V(\Gamma)}}\mu(v).
\]
If $\E\subset E(\Gamma)$ is a nondisconnecting subset of edges, then there is an induced degree-$(d+|\E|)$ polarization $\mu_\E$ on $\Gamma_\E$ (the graph induced by removing the edges $\E$ in $E(\Gamma)$) given as 
\[
\mu_\E(v):=\mu(v)+\frac{1}{2}\val_\E(v).
\]
Given a subdivision $\Gamma^{\E}$ of $\Gamma$ for some $\E\subset E(\Gamma)$, there is an induced degree-$d$ polarization $\mu^\E$ on $\Gamma^\E$ given as
\[
\mu^\E(v):=
\begin{cases}
\begin{array}{ll}
\mu(v)& \text{if}\;v\in V(\Gamma);\\
0&\text{otherwise.}
\end{array}
\end{cases}
\]

We now prove a numerical result concerning the function $\beta_D$, which is, essentially, the same statement as in \cite[Lemma 3]{Es01} (in the case of invertible sheaves).
\begin{Lem}
\label{lem:cap}
Given subsets $V$ and $W$ of $V(\Gamma)$, we have
\[
\beta_D(V\cup W)+\beta_D(V\cap W)=\beta_D(V)+\beta_D(W)-|E(V,W)|.
\]
In particular, $\beta_D(V)+\beta_D(V^c)=\delta_V$.
\end{Lem}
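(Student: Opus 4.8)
The plan is to prove the identity by expanding $\beta_D$ according to its definition and tracking how each of the three constituent terms---the degree $\deg(D|_V)$, the polarization $\mu(V)$, and the cut-size $\delta_V/2$---behaves under union and intersection. First I would handle the two terms that are genuinely additive set functions. Both $V\mapsto \deg(D|_V)=\sum_{v\in V}D(v)$ and $V\mapsto \mu(V)=\sum_{v\in V}\mu(v)$ are sums over vertices, so they satisfy the inclusion--exclusion identity exactly:
\[
\deg(D|_{V\cup W})+\deg(D|_{V\cap W})=\deg(D|_V)+\deg(D|_W),
\]
and likewise for $\mu$. These terms therefore contribute nothing to the correction $-|E(V,W)|$ on the right-hand side.

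The entire content of the lemma thus reduces to the behavior of the cut-size $\delta_V=|E(V,V^c)|$, and this is the step I expect to require the most care. Concretely, I would prove the purely combinatorial identity
\[
\delta_{V\cup W}+\delta_{V\cap W}=\delta_V+\delta_W-2|E(V,W)|,
\]
since multiplying by $\tfrac12$ then produces the desired $-|E(V,W)|$ term. The cleanest route is to classify each edge of $\Gamma$ according to which of the four regions $V\cap W$, $V\setminus W$, $W\setminus V$, $V^c\cap W^c$ its two endpoints lie in, and to check directly, for each of the (finitely many) endpoint-type combinations, the contribution it makes to each of the four quantities $\delta_{V\cup W}$, $\delta_{V\cap W}$, $\delta_V$, $\delta_W$, and $|E(V,W)|$. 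The main obstacle is purely bookkeeping: one must verify that the edges counted by $|E(V,W)|=|E(V\setminus W,W\setminus V)|$ are precisely those that are separated by both $V$ and $W$ but lie entirely inside $V\cup W$ and entirely outside $V\cap W$, so that they are double-counted on the left of the cut identity relative to the right. An edge joining $V\setminus W$ to $W\setminus V$ contributes to $\delta_V$ and to $\delta_W$ but to neither $\delta_{V\cup W}$ nor $\delta_{V\cap W}$, yielding a discrepancy of $2$ per such edge, which is exactly the claimed correction.

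Finally, combining the three expansions---subtracting the additive $\mu$-identity from the additive degree-identity and adding $\tfrac12$ times the cut identity---gives the stated formula. For the second assertion I would specialize to $W=V^c$: then $V\cup W=V(\Gamma)$, $V\cap W=\emptyset$, and $E(V,V^c)$ is the cut of size $\delta_V$. Since $\beta_D(V(\Gamma))=\deg D-\mu(V(\Gamma))+0=d-d=0$ (the full graph has no cut, so $\delta_{V(\Gamma)}=0$) and $\beta_D(\emptyset)=0$ as well, the general identity collapses to $0=\beta_D(V)+\beta_D(V^c)-\delta_V$, which rearranges to $\beta_D(V)+\beta_D(V^c)=\delta_V$ as required.
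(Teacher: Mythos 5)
Your proposal is correct and follows essentially the same route as the paper: both reduce the statement, via the additivity of $\deg(D|_V)$ and $\mu(V)$, to the cut identity $\delta_{V\cup W}+\delta_{V\cap W}=\delta_V+\delta_W-2|E(V,W)|$, which is then verified by classifying each edge according to the location of its endpoints among $V\cap W$, $V\setminus W$, $W\setminus V$, and $(V\cup W)^c$. Your explicit treatment of the case $W=V^c$ for the second assertion is a welcome detail that the paper leaves implicit.
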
 
\begin{proof}
It is sufficient to prove that
\[
\delta_{V\cup W}+\delta_{V\cap W}=\delta_V+\delta_W-2|E(V,W)|.
\]
We can check that the last equality holds by proving that each edge $e\in E(\Gamma)$ is counted the same amount of times in each side. In the table below we enumerate all cases (up to symmetry). We write $z_1$ and $z_2$ for the (possibly coincident) vertices incident to $e$. 
\[
\begin{array}{l|c|c|c|c|c}
& \delta_{V\cup W} &\delta_{V\cap W}&\delta_{V}&\delta_W& |E(V,W)|\\
\hline
z_1,z_2\notin V\cup W&0&0&0&0&0\\
z_1\in V\setminus W, z_2\notin V\cup W&1&0&1&0&0\\
z_1,z_2\in V\setminus W &0&0&0&0&0\\
z_1\in V\setminus W, z_2\in W\setminus V&0&0&1&1&1\\
z_1\in V\cap W, z_2\notin V\cup W&1&1&1&1&0\\
z_1\in V\cap W, z_2\in V\setminus W&0&1&0&1&0\\
z_1,z_2\in V\cap W&0&0&0&0&0\\
\end{array}
\]
This concludes the proof.
\end{proof}

Let $\Gamma$ be a graph and $\mu$ a degree-$d$ polarization on $\Gamma$. We say that a degree-$d$ divisor $D$ on $\Gamma$ is $\mu$-\emph{semistable} if $\beta_D(V)\geq0$ for every $V\subset V(\Gamma)$. Given $v_0\in V(\Gamma)$, we say that a degree-$d$ divisor $D$ on $\Gamma$ is $(v_0,\mu)$-\emph{quasistable} if $\beta_D(V)\geq0$ for every $V\subsetneq V(\Gamma)$, with strict inequality if $v_0\in V$.
We say that a pseudo-divisor $(\E,D)$ is \emph{$\mu$-semistable} (respectively, $(v_0,\mu)$-\emph{quasistable}) if $D$ is $\mu^\E$-semistable (respectively, $(v_0,\mu^\E)$-quasistable) on $\Gamma^\E$. Clearly every $(v_0,\mu)$-quasistable pseudo-divisor is $\mu$-semistable.\par
Note that if $\Gamma$ is not connected, then the condition of quasistability can not be satisfied by any divisor on $\Gamma$.

\begin{Rem}
\label{rem:Vc} 
By Lemma \ref{lem:cap}, we have that $\beta_D(V)+\beta_D(V^c)=\delta_V$, hence $\beta_D(V)\geq 0$ if and only if $\beta_D(V^c)\leq \delta_{V^c}$. We deduce that a divisor $D$ on $\Gamma$ is $(v_0,\mu)$-quasistable if and only if $\beta_D(V)\leq\delta_V$ for every nonempty $V\subsetneq V(\Gamma)$, with strict inequality if $v_0\notin V$.  In the sequel, we will freely use both characterizations of quasistability.
\end{Rem}

The following is the combinatorial translation of a result due to Esteves about quasistable invertible sheaves on nodal curves.
\begin{Thm}
\label{thm:esteves}
Every divisor $D$ on $\Gamma$ is equivalent to a unique $(v_0,\mu)$-quasistable divisor.
\end{Thm}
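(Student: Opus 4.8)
The statement splits into uniqueness and existence, which I would attack by two separate, essentially linear-algebraic, arguments built on the submodularity of $\beta_D$ (Lemma \ref{lem:cap}) and its dual description (Remark \ref{rem:Vc}). The basic computation underlying both parts is the effect of a firing on $\beta$. Writing any divisor equivalent to $D$ as $D'=D-d^*d(f)$ for some $f\col V(\Gamma)\to\ze$, a direct expansion of $d^*d$ gives, for every $V\subseteq V(\Gamma)$,
\[
\beta_{D'}(V)=\beta_D(V)-\sum_{e\in E(V,V^c)}\bigl(f(v_e)-f(w_e)\bigr),
\]
where $v_e\in V$ and $w_e\in V^c$ denote the endpoints of $e$ (edges inside $V$ cancel, and loops contribute nothing).

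For uniqueness, suppose $D$ and $D'=D-d^*d(f)$ are both $(v_0,\mu)$-quasistable and that $f$ is nonconstant. I would apply the formula above to the top level set $V:=\{v\in V(\Gamma)\col f(v)=\max f\}$, which is nonempty and proper. Every edge of the cut $E(V,V^c)$ then satisfies $f(v_e)-f(w_e)\geq 1$, so $\beta_{D'}(V)\leq \beta_D(V)-\delta_V$. Since $V$ is nonempty and proper, the dual form of quasistability (Remark \ref{rem:Vc}) gives $\beta_D(V)\leq\delta_V$, whence $\beta_{D'}(V)\leq 0$; but quasistability of $D'$ forces $\beta_{D'}(V)\geq 0$. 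Thus $\beta_{D'}(V)=0$ and $\beta_D(V)=\delta_V$. The equality $\beta_D(V)=\delta_V$ for a proper nonempty $V$ forces $v_0\in V$ (again by Remark \ref{rem:Vc}, since that inequality is strict when $v_0\notin V$); but then $\beta_{D'}(V)=0$ with $v_0\in V$ contradicts the strict inequality required by quasistability of $D'$. Hence $f$ is constant and $D=D'$.

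For existence I would first reduce quasistability to semistability by an infinitesimal perturbation of the polarization. Fix a small $\epsilon>0$ and set $\mu'(v_0):=\mu(v_0)+\epsilon$ and $\mu'(v):=\mu(v)-\tfrac{\epsilon}{|V(\Gamma)|-1}$ for $v\neq v_0$, so that $\mu'$ is again a degree-$d$ polarization. A short computation shows that $\mu'(V)-\mu(V)$ is positive for proper $V\ni v_0$ and negative for nonempty $V\not\ni v_0$; hence, choosing $\epsilon$ smaller than the finitely many nonzero values taken by the functions $\beta_{D'}$ on the (finitely many) semistable representatives, a divisor in the class of $D$ is $(v_0,\mu)$-quasistable if and only if it is $\mu'$-semistable. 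It then suffices to produce a $\mu'$-semistable divisor. To this end I would minimize an energy over the class: writing $D'=D-d^*d(f)$, set
\[
\E(D'):=\tfrac12\langle f,d^*d(f)\rangle-\langle D-\mu',f\rangle.
\]
Because $d^*d(\mathbf 1)=0$ and $\sum_{v}(D-\mu')(v)=0$, this expression is independent of the representative $f$ modulo constants, hence a well-defined, strictly convex and coercive function on the rank-$(|V(\Gamma)|-1)$ lattice of divisors equivalent to $D$, and therefore attains a minimum. The key identity, obtained by expanding $\E$ and using the definition of $\beta$, is
\[
\E\bigl(D'+d^*d(f_V)\bigr)-\E(D')=\beta_{D'}(V)
\]
for the indicator function $f_V$ of any $V\subseteq V(\Gamma)$. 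Minimality of $D'$ then yields $\beta_{D'}(V)\geq 0$ for all $V$, i.e.\ $\mu'$-semistability, completing the existence proof.

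The main technical point is this existence argument: one must carefully verify that $\E$ is genuinely invariant under adding constants to $f$ and is coercive on the divisor lattice (so that a lattice minimum exists), confirm the firing identity displayed above, and do the bookkeeping showing that the perturbation $\mu'$ converts the strict quasistability inequalities at $v_0$ exactly into the (nonstrict) semistability inequalities for $\mu'$. The uniqueness part, by contrast, is a short extremal-set argument and should present no difficulty.
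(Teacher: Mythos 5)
Your proposal is correct, but it takes a genuinely different route from the paper. The paper does not prove Theorem \ref{thm:esteves} from scratch: it simply cites \cite[Proposition 27 and Theorem 32]{Es01}, noting that the statement also follows from its tropical analogue, Theorem \ref{thm:quasistable}, which is proved there by an iterative chip-firing construction (repeatedly firing from the minimal subcurve $Y$ with $\beta_\D(Y)$ minimal) together with a uniqueness argument resting on Lemma \ref{lem:valP}. Your uniqueness step---applying the firing formula to the top level set of $f$---is the discrete counterpart of that sink argument and is airtight. Your existence step is where the approaches really diverge: instead of an iterative algorithm, you perturb $\mu$ to $\mu'$ so that $(v_0,\mu)$-quasistability becomes $\mu'$-semistability, and then minimize a Dirichlet-type energy on the lattice of divisors equivalent to $D$; this is a variational argument in the spirit of Oda--Seshadri \cite{OS}. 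I checked the key identity $\E\bigl(D'+d^*d(f_V)\bigr)-\E(D')=\beta_{D'}(V)$ (with $\beta$ computed for $\mu'$), as well as the well-definedness and coercivity of $\E$ on $\ze^{V(\Gamma)}/\ze\cdot\mathbf{1}$; they are correct. What your route buys is a self-contained, purely combinatorial proof and a variational characterization of quasistable divisors; what the paper's route buys is brevity and automatic consistency with the tropical statement it needs anyway. Two points to tighten. First, the choice of $\epsilon$: the clean justification is that for each of the finitely many subsets $V\subsetneq V(\Gamma)$ the possible values of $\beta_{D'}(V)$, over all degree-$d$ divisors $D'$, form a coset of $\ze$ in $\R$, so there is a uniform $\eta>0$ with no nonzero value in $(-\eta,\eta)$; your phrasing via ``finitely many semistable representatives'' risks circularity (semistable with respect to which polarization?). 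Note also that only the implication that $\mu'$-semistable implies $(v_0,\mu)$-quasistable is actually needed, and that for $V\ni v_0$ the required strict inequality is automatic from $\mu'(V)>\mu(V)$, the gap $\eta$ being needed only for $V\not\ni v_0$. Second, your perturbation divides by $|V(\Gamma)|-1$, so the one-vertex case must be noted separately---but there $\Prin(\Gamma)=0$ and every degree-$d$ divisor is $(v_0,\mu)$-quasistable, so the statement is trivial.
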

\begin{proof}
This follows from \cite[Proposition 27 and Theorem 32]{Es01}. Alternatively, this is also a consequence of Theorem \ref{thm:quasistable} (which gives the analogue statement for tropical curves).
\end{proof}
\begin{Rem}
\label{rem:subdivision}
It is easy to see that if $D$ is a $(v_0,\mu^\E)$-quasistable divisor on $\Gamma^\E$ then $D(v)=0, -1$ for every exceptional vertex $v\in V(\Gamma^\E)$. Moreover if $\widehat{\Gamma}$ is a refinement of $\Gamma$ then $\mu$ induces a polarization $\widehat{\mu}$ in $\widehat{\Gamma}$ given by 
\[
\widehat{\mu}(v)=\begin{cases}
                           \begin{array}{ll}
													  0,&\text{ if $v$ is exceptional;}\\
														\mu(v),& \text{ if $v\in V(\Gamma)$},
														\end{array}
									\end{cases}
									\]
where we view $V(\Gamma)$ as a subset of $V(\widehat{\Gamma})$ via the injection $a\col V(\Gamma)\ra V(\wh{\Gamma})$ induced by the refinement (see Subsection \ref{subsec:graphs}). If $D$ is a $(v_0,\widehat{\mu})$-quasistable divisor on $\widehat{\Gamma}$ then for every edge $e\in E(\Gamma)$, we have that $D(v)=0$ for all but at most one exceptional vertex $v$ over $e$; if such a vertex $v$ over $e$ exists, then $D(v)=-1$. Hence, every $(v_0,\widehat{\mu})$-quasistable divisor $D$ of $\widehat{\Gamma}$ induces a $(v_0,\mu)$-quasistable pseudo-divisor $(\E',D')$ on $\Gamma$.
\end{Rem}

\begin{Rem}
 The fact that the degree of quasistable divisors are $0$ or $-1$ on exceptional vertices follows from the choice of which inequalities are strict in the definition of quasistability. We could have defined quasistability by asking that $-D$ is $(v_0,-\mu)$-quasistable. In this case quasistable divisors would have degree $0$ or $1$ on the exceptional vertices. All the constructions and results of the paper would hold in this alternative setting.
\end{Rem}

In the next result we study how quasistability behaves under any specialization and deletion of  nondisconnecting edges. This is an important tool to establish an interplay between quasistable pseudo-divisors on a graph and quasistable divisors on spanning connected  subgraphs (possibly changing the polarization). 

\begin{Prop}
\label{prop:spec}
Let $\Gamma$ be a graph, $v_0$ a vertex of $\Gamma$, and $\mu$ a degree-$d$ polarization on $\Gamma$. Assume that $\iota\col\Gamma\ra\Gamma'$ is a specialization of graphs. For every pseudo-divisor $(\E,D)$ on $\Gamma$ of  degree-$d$, the following properties hold:
\begin{enumerate}
\item[(i)] if $(\E,D)$ is $(v_0,\mu)$-quasistable then $\iota_*(\E,D)$ is an $(\iota(v_0),\iota_*(\mu))$-quasistable pseudo-divisor on $\Gamma'$;
\item[(ii)] $(\E,D)$ is $(v_0,\mu)$-quasistable if and only if $\E$ is nondisconnecting and $D_\E$ is a $(v_0,\mu_\E)$-quasistable divisor on $\Gamma_\E$.
\end{enumerate}
\end{Prop}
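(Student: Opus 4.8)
**

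The plan is to reduce both statements to the defining inequalities for $\beta_D$ on subsets of vertices, and to translate carefully between the polarizations $\mu$, $\iota_*(\mu)$, $\mu^\E$, and $\mu_\E$ living on the various graphs. The main bookkeeping device will be comparing $\beta$-values on $\Gamma^\E$ with those on $(\Gamma')^{\E'}$ and on $\Gamma_\E$, using the combinatorial identities for cuts (Lemma \ref{lem:cap}) together with the definitions of the induced polarizations.

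For part (i), I would first observe that a specialization $\iota\col\Gamma\ra\Gamma'$ induces a compatible specialization $\iota^\E\col\Gamma^\E\ra(\Gamma')^{\E'}$ with $\E'=\E\cap E(\Gamma')$, so it suffices to prove the statement for honest divisors (replacing $\Gamma$ by $\Gamma^\E$ and $D$ by the divisor on $\Gamma^\E$). Thus I reduce to showing: if $D$ is $(v_0,\mu)$-quasistable on $\Gamma$ and $\iota\col\Gamma\ra\Gamma'$ contracts a set of edges, then $\iota_*(D)$ is $(\iota(v_0),\iota_*(\mu))$-quasistable on $\Gamma'$. The key point is that any $V'\subsetneq V(\Gamma')$ pulls back to $V:=\iota^{-1}(V')\subsetneq V(\Gamma)$, and one checks directly from the definitions that $\deg(\iota_*(D)|_{V'})=\deg(D|_V)$, that $\iota_*(\mu)(V')=\mu(V)$, and — crucially — that $\delta_{\Gamma',V'}=\delta_{\Gamma,V}$ (contracting edges \emph{within} $V$ or within $V^c$ does not change the cut, since only edges crossing between $V$ and $V^c$ contribute, and no such crossing edge is contracted because $V=\iota^{-1}(V')$ is a union of fibers). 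Hence $\beta_{\iota_*(D)}(V')=\beta_D(V)$, and the inequality (strict exactly when $\iota(v_0)\in V'$, i.e.\ when $v_0\in V$) is inherited verbatim.

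For part (ii), I would work with the deletion $\Gamma_\E$ and the polarization $\mu_\E(v)=\mu(v)+\tfrac12\val_\E(v)$, noting $V(\Gamma_\E)=V(\Gamma)$ and that deleting $\E$ changes cuts by $\delta_{\Gamma_\E,V}=\delta_{\Gamma,V}-|E(V,V^c)\cap\E|$ while the polarization correction accounts for exactly the edges of $\E$ incident to $V$. The computation to carry out is that for every $V\subseteq V(\Gamma)$,
\[
\beta^{\Gamma_\E}_{D_\E}(V)=\beta^{\Gamma}_{D}(V)-\tfrac12\bigl(|E(V,V^c)\cap\E|\bigr)-\tfrac12\bigl(\val_\E(V)-|E(V,V^c)\cap\E|\bigr),
\]
which, after simplifying $\val_\E(V)=2|\E\cap E(V,V)|+|E(V,V^c)\cap\E|$ (loops and internal $\E$-edges counting twice), should collapse to $\beta^{\Gamma_\E}_{D_\E}(V)=\beta^{\Gamma}_{D}(V)-(\text{number of }\E\text{-edges inside }V)$. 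This makes the forward implication of the ``if and only if'' almost immediate, since subtracting a nonnegative integer preserves the inequalities. The delicate direction is the converse and the nondisconnecting hypothesis: I would handle it by arguing that if $\E$ were disconnecting then some $V\subsetneq V(\Gamma)$ would be a union of connected components of $\Gamma_\E$, forcing $\beta_D(V)+\beta_D(V^c)=0$ on $\Gamma_\E$, which contradicts the two \emph{strict} quasistability inequalities (one of $V,V^c$ contains $v_0$); conversely, when $\E$ is nondisconnecting, one recovers quasistability on $\Gamma$ from $\Gamma_\E$ by reversing the above identity and using that the correction term is a nonnegative integer whose vanishing I can control precisely when $V$ contains no internal $\E$-edges.

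I expect the main obstacle to be the converse in part (ii): controlling the \emph{strictness} of the inequalities and pinning down exactly when the correction term ``number of $\E$-edges inside $V$'' vanishes, so that a strict inequality on $\Gamma_\E$ does not degrade to a non-strict one on $\Gamma$ (or vice versa). The cleanest way to manage this is probably to apply Remark \ref{rem:Vc}, phrasing quasistability symmetrically via both $\beta_D(V)\geq 0$ and $\beta_D(V^c)\leq\delta_{V^c}$, so that the strict condition at $v_0$ and the deletion correction interact transparently; I would also lean on Lemma \ref{lem:cap} to reduce the verification to testing only those $V$ that induce \emph{connected} subgraphs, which simplifies the loop/internal-edge bookkeeping considerably.
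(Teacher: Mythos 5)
Your part (i) is correct and is essentially the paper's own argument: pass to $\Gamma^\E$ to reduce to $\E=\emptyset$, and use that degree, polarization and cut of $V'=\iota(V)$ agree with those of the fiber $V=\iota^{-1}(V')$, so $\beta_{\iota_*(D)}(V')=\beta_D(\iota^{-1}(V'))$.

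Part (ii) has genuine gaps, located exactly where the paper does its real work. First, your bookkeeping identity is incorrect. Since $D$ is a pseudo-divisor, ``$\beta^{\Gamma}_{D}(V)$'' can only sensibly mean $\beta_D(V)$ computed on $\Gamma^\E$ with respect to $\mu^\E$ (this is what quasistability of $(\E,D)$ refers to), and with that meaning the correct relations for $V\subseteq V(\Gamma)$ are the paper's Equations \eqref{eq:DDE+} and \eqref{eq:DDE-}, namely
\[
\beta^{\Gamma^\E}_{D}(V)=\beta^{\Gamma_\E}_{D_\E}(V)+\kappa_\E(V)
\qquad\text{and}\qquad
\beta^{\Gamma^\E}_{D}(V)-\delta_{\Gamma^\E,V}=\beta^{\Gamma_\E}_{D_\E}(V)-\delta_{\Gamma_\E,V},
\]
where $\kappa_\E(V)=\sum_{v\in V}\val_\E(v)$. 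Your displayed formula (whose right-hand side simplifies to a correction of $-\tfrac12\kappa_\E(V)$) and your claimed collapse to ``number of $\E$-edges inside $V$'' both carry the wrong coefficient on the edges of $\E$ crossing the cut. Worse, the inequality runs the wrong way for your forward implication: from $\beta^{\Gamma^\E}_D(V)\ge0$ and subtraction of a \emph{nonnegative} quantity you cannot conclude $\beta^{\Gamma_\E}_{D_\E}(V)\ge0$. Concretely, take $\Gamma$ with vertices $u=v_0$ and $v$, three parallel edges, $\E=\{e_1,e_2\}$, $\mu\equiv0$, and $D(u)=3$, $D(v)=-1$: then $\beta^{\Gamma^\E}_D(\{v\})=\tfrac12\ge0$ while $\beta^{\Gamma_\E}_{D_\E}(\{v\})=-\tfrac32<0$. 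The forward direction is saved only by the second identity above, i.e.\ by the invariance of $\beta-\delta$ together with Remark \ref{rem:Vc}; that remark is not a device for managing strictness but the actual mechanism of the proof.

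The larger gap is in the converse. Quasistability of $(\E,D)$ is quantified over \emph{all} $V'\subseteq V(\Gamma^\E)$, including sets containing exceptional vertices, and these conditions are strictly stronger than the ones for the traces $V=V'\cap V(\Gamma)$: adding to $V'$ the exceptional vertex of an $\E$-edge with both endpoints in $V'$ lowers $\beta_D$ by $2$ (degree drops by $1$, the cut by $2$). In the example above all trace inequalities hold (with the required strictness), yet $\beta_D(\{v,v_{e_1},v_{e_2}\})=-\tfrac32<0$, so the pseudo-divisor is not quasistable. Hence ``reversing the identity'' recovers only the inequalities for subsets of $V(\Gamma)$ and says nothing about the sets that can actually fail; the reduction to connected $V$ via Lemma \ref{lem:cap} does not touch this issue. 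The content of the paper's proof of this direction is precisely the edge-by-edge estimate
\[
\delta_{\Gamma^\E,V'}+2\kappa_\E(V)\geq\delta_{\Gamma^\E,V}+2|V'\setminus V|,
\]
which bounds $\beta_D(V')$ from below by $\beta^{\Gamma_\E}_{D_\E}(V)$, and nothing in your plan plays this role. (Your argument for the nondisconnecting statement, via unions of connected components of $\Gamma_\E$ and Lemma \ref{lem:cap}, is fine --- arguably cleaner than the paper's --- but it too presupposes the forward transfer identity in its correct form.)
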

\begin{proof}
Let us start proving \emph{(i)}. Upon switching $\Gamma$ with $\Gamma^\E$, we can assume that $\E=\emptyset$. We need to show that for every $V'\subsetneq V(\Gamma')$ we have $\beta_{\iota_*(D)}(V')\geq0$, with strict inequality if $\iota(v_0)\in V'$. We know that
\[
\deg(\iota_*(D)|_{V'})=\deg(D|_{\iota^{-1}(V')}),\quad \iota_*(\mu)(V')=\mu(\iota^{-1}(V')),\quad\delta_{\Gamma',V'}= \delta_{\Gamma,\iota^{-1}(V')},
\]
which readily implies $\beta_{\iota_*(D)}(V)=\beta_D(\iota^{-1}(V))$. Moreover, $\iota(v_0)\in V'$ if and only if $v_0\in \iota^{-1}(V')$, hence the result follows.\par
Now we move to the proof of \emph{(ii)}. First we show that if $(\E,D)$ is $(v_0,\mu)$-quasistable then $\E$ is nondisconnecting. Assume, by contradiction, that $\E$ is disconnecting, and let $V$ be a subset of $V(\Gamma)$ such that the cut $E(V,V^c)$ is contained in $\E$. Note that
\[
\deg(D|_V)-\mu(V)+\deg(D|_{V^c})-\mu(V^c)=\delta_V,
\]
because $D(v)=-1$ for each exceptional vertex $v\in V(\Gamma^\E)$ (by the definition of pseudo-divisor). Then
\[
\left(\deg(D|_V)-\mu(V)-\frac{\delta_V}{2}\right)+\left(\deg(D|_{V^c})-\mu(V^c)-\frac{\delta_{V^c}}{2}\right)=0.
\]
However, since $(\E,D)$ is $(v_0,\mu)$-quasistable, we must have
\[
\deg(D|_V)-\mu(V)-\frac{\delta_V}{2}\leq 0 \quad \text{ and } \quad 
\deg(D|_{V^c})-\mu(V^c)-\frac{\delta_{V}}{2}\leq 0,
\]
hence the two inequalities are in fact equalities, which contradicts that one of them must be strict.\par
Now, we assume $\E$ nondisconnecting and we show the equivalence of the quasistability conditions. Fix $V\subset V(\Gamma)=V(\Gamma_\E)$. We have $\deg(D_\E|_V)=\deg(D|_V)$ and, if we let  $\kappa_\E(V)=\sum_{v\in V}\val_\E(v)$, then
\[
\mu_\E(V)=\mu(V)+\frac{1}{2}\kappa_\E(V)=\mu^\E(V)+\frac{1}{2}\kappa_\E(V).
\]
 Moreover we have $\delta_{\Gamma_\E,V}=\delta_{\Gamma^\E,V}-\kappa_\E(V)$. 
These equalities imply that
\begin{multline*}
\deg(D|_V)-\mu^\E(V)\pm\frac{\delta_{\Gamma^\E,V}}{2}=\\=\deg(D_\E|_V)-(\mu_\E(V)-\frac{1}{2}\kappa_\E(V))\pm\frac{\delta_{\Gamma_\E,V}+\kappa_\E(V)}{2}
\end{multline*}
and hence
\begin{equation}
\label{eq:DDE+}
\deg(D|_V)-\mu^\E(V)+\frac{\delta_{\Gamma^\E,V}}{2}=\deg(D_\E|_V)-\mu_\E(V)+\frac{\delta_{\Gamma_\E,V}}{2}+\kappa_\E(V)
\end{equation}
and
\begin{equation}
\label{eq:DDE-}
\deg(D|_V)-\mu^\E(V)-\frac{\delta_{\Gamma^\E,V}}{2}=\deg(D_\E|_V)-\mu_\E(V)-\frac{\delta_{\Gamma_\E,V}}{2}.
\end{equation}
If $(D,\E)$ is $(v_0,\mu^\E)$-quasistable, then the left hand side of Equation \eqref{eq:DDE-} is less or equal than $0$, and equality can only hold if $v_0\in V$. This means that $D_\E$ is $(v_0,\mu_\E)$-quasistable.\par
Now assume that $D_\E$ is $(v_0,\mu_\E)$-quasistable. Fix $V'\subset V(\Gamma^\E)$ and define $V=V'\cap V(\Gamma)$. This means that every vertex in $V'\setminus V$ is exceptional. Then
\[
\deg(D|_{V'})=\deg(D|_V)-|V'\setminus V|\quad\text{and}\quad \mu^\E(V')=\mu^\E(V).
\]
Combining with Equation \eqref{eq:DDE+}, we get
\begin{multline*}
\deg(D|_{V'})-\mu^\E(V')+\frac{\delta_{\Gamma^\E,V'}}{2}=\\
=\deg(D_\E|_V)-\mu_\E(V)+\frac{\delta_{\Gamma_\E,V}}{2}+\frac{\delta_{\Gamma^\E,V'}-\delta_{\Gamma^\E,V}}{2}-|V'\setminus V|+\kappa_\E(V).
\end{multline*}

Now, to prove that $(\E,D)$ is $(v_0,\mu)$-quasistable it is sufficient that
\[
\delta_{\Gamma^\E,V'}+2\kappa_\E(V)\geq\delta_{\Gamma^\E,V}+2|V'\setminus V|.
\]
We check that the last inequality holds by proving that the contribution of each edge $e\in E(\Gamma)$ to the left hand side is greater or equal than its contribution to the right hand side. Any $e\notin \E$ contributes the same in both sides. In the table below, we enumerate the remaining cases. We write $w_1$ and $w_2$ for the (possibly coincident) vertices incident to $e$, and  $w_e$ for the exceptional vertex over $e$.
\[
\begin{array}{l|c|c|c|c}
& \delta_{\Gamma^\E,V'} &\kappa_\E(V)&\delta_{\Gamma^\E,V}&|V'\setminus V|\\
\hline
w_1,w_2\in V, w_e\in V'&0&2&2&1\\
w_1\in V, w_2\notin V, w_e\in V'&1&1&1&1\\
w_1,w_2\notin V,w_e\in V'&2&0&0&1\\
w_1,w_2\in V, w_e\notin V'&2&2&2&0\\
w_1\in V,w_2\notin V, w_e\notin V'&1&1&1&0\\
w_1,w_2\notin V, w_e\notin V'&0&0&0&0
\end{array}
\]
This concludes the proof.
\end{proof}

We continue our analysis of quasistable divisors in the case when $\Gamma$ is tree-like.

\begin{Prop}
\label{prop:tree}
Let $\Gamma$ be a tree-like graph, $v_0$ a vertex of $\Gamma$, and $\mu$ a degree-$d$ polarization on $\Gamma$. Then there is exactly one $(v_0,\mu)$-quasistable divisor of degree-$d$ on $\Gamma$.
\end{Prop}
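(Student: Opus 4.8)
The plan is to package both existence and uniqueness into a single observation: a tree-like graph has trivial Jacobian, so there is only one linear equivalence class of degree-$d$ divisors, and then invoke the uniqueness clause of Theorem \ref{thm:esteves}. First I would dispose of the loops. A loop at a vertex $v$ has both of its endpoints equal to $v$, so it never contributes to a cut $E(V,V^c)$; thus $\delta_V$ is unchanged when all loops are deleted. Since $\mu(V)$, $\deg(D|_V)$ and the marked vertex $v_0$ are also unaffected, the function $\beta_D$ and hence the set of $(v_0,\mu)$-quasistable degree-$d$ divisors are literally the same for $\Gamma$ and for the graph obtained by deleting its loops. Moreover a loop contributes $e-e=0$ to the differential $d$, so $\Prin(\Gamma)=\Im(d^*d)$ is unchanged as well. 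Consequently I may assume $\Gamma$ is a tree.

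Next I would show that on a tree every pair of degree-$d$ divisors is equivalent, i.e. $\Pic^0(\Gamma)=0$. The Laplacian $d^*d$ on a connected graph has kernel equal to the constant functions, so $\Prin(\Gamma)=\Im(d^*d)$ and $\ker(\deg)$ are both free abelian of rank $|V(\Gamma)|-1$, and their finite quotient $\Pic^0(\Gamma)$ has order equal to the number of spanning trees of $\Gamma$ by the matrix–tree theorem. A tree has exactly one spanning tree, namely itself, so $\Pic^0(\Gamma)=0$ and $\Prin(\Gamma)=\ker(\deg)$. In particular any two degree-$d$ divisors on $\Gamma$ are equivalent.

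Combining the two steps finishes the argument. All degree-$d$ divisors on $\Gamma$ lie in a single equivalence class (which is nonempty, e.g. the divisor placing $d$ at $v_0$), and by Theorem \ref{thm:esteves} that class contains a unique $(v_0,\mu)$-quasistable representative; hence there is exactly one $(v_0,\mu)$-quasistable divisor of degree $d$. I expect the only genuinely delicate point to be the triviality of the Jacobian of a tree. A fully self-contained alternative would bypass the matrix–tree theorem by checking directly that $u-v\in\Prin(\Gamma)$ for adjacent vertices $u,v$ (so chips slide freely along edges) and inducting on the number of edges to conclude that every degree-$0$ divisor is principal. Either route reduces the proposition entirely to Theorem \ref{thm:esteves}, so no fresh analysis of the inequalities $\beta_D(V)\ge 0$ defining quasistability is required.
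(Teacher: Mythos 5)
Your proof is correct, but it follows a genuinely different route from the paper. You reduce to a tree (as the paper does, though you delete loops while the paper contracts them, which for loops is the same operation), then prove that a tree has trivial Jacobian, so all degree-$d$ divisors form a single equivalence class, and finally invoke the uniqueness-and-existence statement of Theorem \ref{thm:esteves} to extract the unique quasistable representative of that class. The paper instead gives a self-contained induction on the number of vertices: at a leaf $v\neq v_0$ the two quasistability inequalities for $V=\{v\}$ pin down the value $D(v)=\lfloor\mu(v)+\tfrac12\rfloor$ exactly, then one contracts the leaf edge and applies Proposition \ref{prop:spec}(i) to push the problem to a smaller tree; this simultaneously yields uniqueness and an explicit algorithm producing the divisor, hence existence, with no appeal to Theorem \ref{thm:esteves}. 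The trade-off is that your argument is shorter and conceptually transparent (trivial $\Pic^0$ plus canonical representatives), but it outsources all the real work to Theorem \ref{thm:esteves}, which the paper itself only proves by citation to Esteves' algebro-geometric result or by the later and substantially harder tropical Theorem \ref{thm:quasistable}; the paper's induction keeps Proposition \ref{prop:tree} elementary and constructive. Your route is not circular within the paper's logical structure --- the proof of Theorem \ref{thm:quasistable} uses only Lemmas \ref{lem:beta}, \ref{lem:valP}, \ref{lem:i*} and chip-firing, never Proposition \ref{prop:tree} or its corollaries --- so the argument stands, and both of your justifications for $\Pic^0(\text{tree})=0$ (the matrix--tree theorem, or sliding chips across the single-edge cuts of a tree) are sound.
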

\begin{proof}
First, we note that a divisor $D$ on $\Gamma$ is $(v_0,\mu)$-quasistable if and only if $\iota_*(D)$ is $(\iota(v_0),\iota_*(\mu))$-quasistable, where $\iota$ is the contraction of all loops of $\Gamma$. In fact, loops do not contribute to any of the values $\deg(D|_V)$, $\mu(V)$ and $\delta_V$ for $V\subset V(\Gamma)$. So, we can safely assume that $\Gamma$ is a tree.\par
  We proceed by induction on the number of vertices. If $\Gamma$ has a only one vertex the result is clear. Assume that $\Gamma$ has at least two vertices and let $D_1$ and $D_2$ be $(v_0,\mu)$-quasistable divisors on $\Gamma$. Let $v$ be a vertex of $\Gamma$ of valence $1$ such that $v\neq v_0$.  Hence 
\[
-\frac{1}{2}\leq D_i(v)-\mu(V)< \frac{1}{2}, \text{ for $i=1,2$}
\]
which means
\[
D_i(v)= \left\lfloor\mu(V)+\frac{1}{2} \right \rfloor, \text{ for $i=1,2$}.
\]
Let $\iota\col \Gamma\to \Gamma'$ be the contraction of the only edge $e$ incident to $v$. By the induction hypothesis, there exists a unique $(\iota(v_0),\iota_*(\mu))$-quasistable  divisor on $\Gamma'$. In particular $\iota_*(D_1)=\iota_*(D_2)$, because, by Proposition \ref{prop:spec}, $\iota_*(D_1)$ and $\iota_*(D_2)$ are $(\iota(v_0),\iota_*(\mu))$-quasistable. This means that $D_1(w)=D_2(w)$ for every vertex $w$ not incident to $e$. Since $D_1(v)=D_2(v)$ and the degrees of $D_1$ and $D_2$ are equal, it follows that $D_1=D_2$.\par
We note that the proof above actually gives a algorithm to find a $(v_0,\mu)$-quasistable divisor on $\Gamma$ hence, in particular, it also provides the existence.
\end{proof}

\begin{Cor}
\label{cor:unique}
Let $\Gamma$ be a graph, $v_0$ a vertex of $\Gamma$, and $\mu$ a polarization on $\Gamma$. If $\E$ is the complement of a spanning tree of $\Gamma$, then there exists exactly one $(v_0,\mu^\E)$-quasistable divisor $D$ on $\Gamma^\E$ with $D(v')=-1$ for every $v'\in V(\Gamma^\E)\setminus V(\Gamma)$.
\end{Cor}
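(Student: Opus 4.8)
The plan is to reduce the statement to Proposition \ref{prop:tree} by recognizing that a divisor on $\Gamma^\E$ which equals $-1$ on every exceptional vertex is nothing but a pseudo-divisor on $\Gamma$, and then transporting the quasistability condition to the spanning tree via Proposition \ref{prop:spec}(ii). First I would observe that giving a divisor $D$ on $\Gamma^\E$ with $D(v')=-1$ for every $v'\in V(\Gamma^\E)\setminus V(\Gamma)$ is exactly the data of a pseudo-divisor $(\E,D)$ on $\Gamma$, in the sense of the definition in Subsection \ref{subsec:graphs}. Under this identification, the requirement that $D$ be $(v_0,\mu^\E)$-quasistable on $\Gamma^\E$ is, by definition, precisely the condition that the pseudo-divisor $(\E,D)$ be $(v_0,\mu)$-quasistable. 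Hence the corollary is equivalent to the assertion that there is a unique $(v_0,\mu)$-quasistable pseudo-divisor on $\Gamma$ supported on $\E$.

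Next, since $\E$ is the complement of a spanning tree $T$, the graph $\Gamma_\E$ obtained by deleting the edges of $\E$ has edge set $T$ and is therefore a tree; in particular it is connected, so $\E$ is nondisconnecting and $\Gamma_\E$ is tree-like. By Proposition \ref{prop:spec}(ii), the pseudo-divisor $(\E,D)$ is $(v_0,\mu)$-quasistable if and only if $D_\E$ is a $(v_0,\mu_\E)$-quasistable divisor on $\Gamma_\E$. Moreover, the assignment $D\mapsto D_\E$ is a bijection between divisors on $\Gamma^\E$ that are $-1$ on all exceptional vertices and divisors on $\Gamma_\E$, the inverse extending a divisor on $V(\Gamma)=V(\Gamma_\E)$ by the value $-1$ on each exceptional vertex. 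A short degree count, using that there are $|\E|$ exceptional vertices each carrying $-1$, confirms that $\deg D_\E=d+|\E|$, which matches the degree of $\mu_\E$.

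Finally, I would invoke Proposition \ref{prop:tree}: because $\Gamma_\E$ is tree-like, there is exactly one $(v_0,\mu_\E)$-quasistable divisor of degree $d+|\E|$ on $\Gamma_\E$. Transporting this unique divisor back through the bijection $D\leftrightarrow D_\E$ yields exactly one divisor $D$ on $\Gamma^\E$ with the required properties, establishing both existence and uniqueness at once. I do not expect a serious obstacle here: the entire argument is formal once the earlier results are in hand, and the only points demanding care are bookkeeping ones—verifying that the hypothesis on $\E$ forces $\Gamma_\E$ to be an honest tree (so that Proposition \ref{prop:tree} applies) and that the two quasistability conditions correspond correctly under $D\leftrightarrow D_\E$ through Proposition \ref{prop:spec}(ii).
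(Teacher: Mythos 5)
Your proposal is correct and is essentially identical to the paper's own proof, which simply reads ``Just combine Propositions \ref{prop:spec} and \ref{prop:tree}.'' You have merely made explicit the bookkeeping the paper leaves implicit: the identification of divisors on $\Gamma^\E$ taking the value $-1$ on exceptional vertices with pseudo-divisors $(\E,D)$ on $\Gamma$, the transfer of quasistability to $\Gamma_\E$ via Proposition \ref{prop:spec}(ii), and the application of Proposition \ref{prop:tree} to the tree $\Gamma_\E$.
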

\begin{proof}
Just combine Propositions  \ref{prop:spec} and \ref{prop:tree}.
\end{proof}

Let $\Gamma$ be a graph, $v_0$ a vertex of $\Gamma$, and $\mu$ a polarization on $\Gamma$.
 The set $\Pos(\Gamma)$ of $(v_0,\mu)$-quasistable pseudo-divisors on $\Gamma$ is a poset where the partial order is given by  $(\E,D)\geq (\E',D')$ if $(\E,D)$ specializes to $(\E',D')$.\par

 If $\iota\col \Gamma\to \Gamma'$ is a graph specialization, by Proposition \ref{prop:spec} there is a natural map 
\begin{align}
\label{eq:i*} \iota_*\col   \mathcal{QD}_{v_0,\mu}(\Gamma)
\to&  \mathcal{QD}_{\iota(v_0),\iota_*(\mu)}(\Gamma')\\
 \nonumber (\E,D)\mapsto&\iota_*(\E,D).
\end{align}
This map is order preserving, hence it is continuous.\par
  Given a nondisconnecting subset $\E$ of $E(\Gamma)$, again by Proposition \ref{prop:spec} we have an induced injection 
\begin{align*}
\kappa_*\col \mathcal{QD}_{v_0,\mu_\E}(\Gamma_\E)
\to& \mathcal{QD}_{v_0,\mu}(\Gamma)\\
  (\E',D')\mapsto&(\E'\cup \E, D)
\end{align*}
where $D(v)=D'(v)$ for every $v\in V(\Gamma)$. The map $\kappa$ is order preserving, hence it is continuous. Moreover, $\kappa$ is an open injection, since Proposition \ref{prop:spec} implies that $\Im(\kappa)=\{(\E',D)\in \mathcal{QD}_{v_0,\mu}(\Gamma) ;
\E\subset \E'\}$.\par
		Given a specialization of graphs $\iota\col\Gamma\ra\Gamma'$ and inclusions $\E'\subset E(\Gamma')\subset E(\Gamma)$, we have an induced specialization $\iota_{\E'}\col  \Gamma_{\E'}\ra \Gamma'_{\E'}$ which makes the following diagram a commutative  diagram of topological spaces:
\[
\begin{CD}
\mathcal{QD}_{v_0,\mu_{\E'}}(\Gamma_{\E'}) 
@>\iota_{\E'*}>> 
\mathcal{QD}_{\iota(v_0),\iota_*(\mu_{\E'})}(\Gamma'_{\E'})
\\
@V\kappa_* VV @V\kappa'_*VV\\
\mathcal{QD}_{v_0,\mu}(\Gamma)@>\iota_*>> \mathcal{QD}_{\iota(v_0),\iota_*(\mu)}(\Gamma')
\end{CD}
\]

\begin{Exa}
\label{exa:poset}
Let $\Gamma$ be the graph with $2$ vertices $v_0$ and $v_1$ and $3$ edges $e_1,e_2,e_3$ connecting them. Let $\mu$ be the degree-$0$ polarization given by $\mu(v)=0$ for every $v\in V(\Gamma)$. The poset $ \mathcal{QD}_{v_0,\mu}(\Gamma)$ 
of pseudo-divisors of $\Gamma$ is as depicted in Figure \ref{fig:poset} (up to permutation of the edges).
\end{Exa}
\begin{figure}[hb]
\begin{tikzpicture}[scale=2.25]
\begin{scope}[shift={(0,0)}]
\draw (0,0) to [out=30, in=150] (1,0);
\draw (0,0) to (1,0);
\draw (0,0) to [out=-30, in=-150] (1,0);
\draw[fill] (0,0) circle [radius=0.02];
\draw[fill] (1,0) circle [radius=0.02];
\draw[fill] (0.5,0.144) circle [radius=0.02];
\draw[fill] (0.5,0) circle [radius=0.02];
\node at (-0.07,0) {1};
\node at (1.05,0) {1};
\node at (0.55,0.23) {-1};
\node at (0.55,0.06) {-1};
\end{scope}
\draw[->] (0.3,-0.15) to (0, -0.5);
\draw[->] (0.7,-0.15) to (1, -0.5);
\begin{scope}[shift={(-0.75,-0.8)}]
\draw (0,0) to [out=30, in=150] (1,0);
\draw (0,0) to (1,0);
\draw (0,0) to [out=-30, in=-150] (1,0);
\draw[fill] (0,0) circle [radius=0.02];
\draw[fill] (1,0) circle [radius=0.02];
\draw[fill] (0.5,0.144) circle [radius=0.02];
\node at (-0.07,0) {1};
\node at (1.07,0) {0};
\node at (0.5,0.23) {-1};
\draw[->] (0.3,-0.15) to (0, -0.5);
\draw[->] (0.7,-0.15) to (1, -0.5);
\end{scope}
\begin{scope}[shift={(+0.75,-0.8)}]
\draw (0,0) to [out=30, in=150] (1,0);
\draw (0,0) to (1,0);
\draw (0,0) to [out=-30, in=-150] (1,0);
\draw[fill] (0,0) circle [radius=0.02];
\draw[fill] (1,0) circle [radius=0.02];
\draw[fill] (0.5,0.144) circle [radius=0.02];
\node at (-0.07,0) {0};
\node at (1.05,0) {1};
\node at (0.5,0.23) {-1};
\draw[->] (0.3,-0.15) to (0, -0.5);
\draw[->] (0.7,-0.15) to (1, -0.5);
\end{scope}
\begin{scope}[shift={(-1.5,-1.6)}]
\draw (0,0) to [out=30, in=150] (1,0);
\draw (0,0) to (1,0);
\draw (0,0) to [out=-30, in=-150] (1,0);
\draw[fill] (0,0) circle [radius=0.02];
\draw[fill] (1,0) circle [radius=0.02];
\node at (-0.07,0) {1};
\node at (1.1,0) {-1};
\end{scope}
\begin{scope}[shift={(0,-1.6)}]
\draw (0,0) to [out=30, in=150] (1,0);
\draw (0,0) to (1,0);
\draw (0,0) to [out=-30, in=-150] (1,0);
\draw[fill] (0,0) circle [radius=0.02];
\draw[fill] (1,0) circle [radius=0.02];
\node at (-0.07,0) {0};
\node at (1.07,0) {0};
\end{scope}
\begin{scope}[shift={(1.5,-1.6)}]
\draw (0,0) to [out=30, in=150] (1,0);
\draw (0,0) to (1,0);
\draw (0,0) to [out=-30, in=-150] (1,0);
\draw[fill] (0,0) circle [radius=0.02];
\draw[fill] (1,0) circle [radius=0.02];
\node at (-0.07,0) {-1};
\node at (1.05,0) {1};
\end{scope}
\end{tikzpicture}
\caption{The poset of pseudo-divisors.}
\label{fig:poset}
\end{figure}
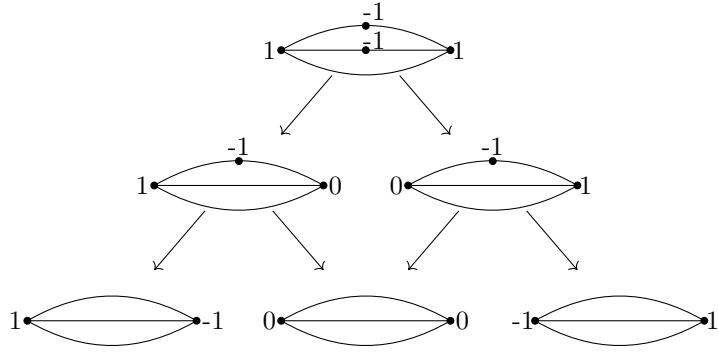

Now, we prove a series of topological properties of the poset  $\Pos(\Gamma)$ (in Propositions \ref{prop:chain} and \ref{prop:cod1}) and of the map $\iota_*$ defined in Equation \eqref{eq:i*} (in Proposition \ref{prop:jclosed} and Corollary \ref{cor:sep}).

\begin{Prop}
\label{prop:chain}
Let $\Gamma$ be a graph, $v_0$ a vertex of $\Gamma$, and $\mu$ a polarization on $\Gamma$. The poset $\Pos(\Gamma)$ is ranked of length $b_1(\Gamma)$.
\end{Prop}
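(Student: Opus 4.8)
The plan is to show that the function $(\E,D)\mapsto |\E|$ is the rank function of $\Pos(\Gamma)$, and that it runs from $0$ on the minimal elements to $b_1(\Gamma)$ on the maximal ones. Concretely, I would prove three facts: (a) along a covering relation $(\E,D)>(\E',D')$ one has $|\E|=|\E'|+1$; (b) the minimal elements are exactly those with $\E=\emptyset$; and (c) the maximal elements are exactly those with $|\E|=b_1(\Gamma)$. Granting these, every maximal chain starts at an element with $\E=\emptyset$, ends at one with $|\E|=b_1(\Gamma)$, and increases $|\E|$ by one at each step, so it has length $b_1(\Gamma)$; this is precisely the assertion that $\Pos(\Gamma)$ is ranked of length $b_1(\Gamma)$.

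For (a), recall that $(\E,D)\ge(\E',D')$ forces $\E\supseteq\E'$ and exhibits a compatible specialization $\Gamma^\E\to\Gamma^{\E'}$ realizing $D'$. If $|\E\setminus\E'|\ge 2$, I pick $e_0\in\E\setminus\E'$ and factor this specialization through $\Gamma^{\E\setminus\{e_0\}}$, contracting the same half-edges; by Proposition \ref{prop:spec}(i) the intermediate pseudo-divisor is again $(v_0,\mu)$-quasistable and lies strictly between, contradicting that the relation is a covering. Hence covering steps increase $|\E|$ by exactly one. For (b), if $\E\neq\emptyset$ I contract the exceptional vertex over one edge of $\E$; Proposition \ref{prop:spec}(i) guarantees the result is still quasistable and strictly smaller, so $(\E,D)$ is not minimal, while any $(\emptyset,D)$ is minimal since nothing below it can shrink $\E$ further.

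For the easy half of (c), I use Proposition \ref{prop:spec}(ii): $(\E,D)$ is quasistable iff $\E$ is nondisconnecting and $D_\E$ is $(v_0,\mu_\E)$-quasistable on $\Gamma_\E$. A nondisconnecting set satisfies $|\E|\le b_1(\Gamma)$, with equality exactly when $\E$ is the complement of a spanning tree (so $\Gamma_\E$ is a tree); by Corollary \ref{cor:unique} each such $\E$ supports a unique quasistable pseudo-divisor, and these admit no larger $\E$, hence are maximal. It remains to prove the converse: if $|\E|<b_1(\Gamma)$ then $(\E,D)$ is not maximal.

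This last step is the main obstacle. Here $\Gamma_\E$ is connected with $b_1(\Gamma_\E)=b_1(\Gamma)-|\E|\ge 1$, so it carries a non-bridge edge $e$; subdividing $e$ yields two candidate pseudo-divisors $(\E\cup\{e\},\tilde D)$ specializing to $(\E,D)$, obtained by placing $-1$ on the new exceptional vertex and compensating with one extra chip at one of the two endpoints of $e$. I would show at least one candidate is $(v_0,\mu^{\E\cup\{e\}})$-quasistable by a case analysis of $\beta_{\tilde D}(V)$: for each $V$ this value either equals $\beta_D(V)$ or drops by $1$, and the drop occurs only for sets separating the two endpoints of $e$. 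The delicate point is to exclude that both candidates fail at once; the leverage is Lemma \ref{lem:cap} together with the fact that any cut containing the non-bridge edge $e$ has $\delta_V\ge 2$, so $\beta_D(V)+\beta_D(V^c)=\delta_V\ge 2$ prevents both complementary separating sets from being small. Choosing $e$ appropriately (and, via Proposition \ref{prop:spec}(ii), reducing to the case $\E=\emptyset$) so that this argument closes is the technical heart of the proof; once (c) is in hand, the ranked-length statement is immediate.
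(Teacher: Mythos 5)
Your overall skeleton is sound and matches the paper's implicit reduction: taking $|\E|$ as the rank function, your facts (a) and (b) follow from Proposition \ref{prop:spec} exactly as you say, the easy half of (c) follows from Corollary \ref{cor:unique} and the bound $|\E|\le b_1(\Gamma)$ for nondisconnecting sets, and the reduction of the hard half of (c) to the case $\E=\emptyset$ via Proposition \ref{prop:spec}(ii) is also how the paper proceeds. The problem is that the hard half of (c) --- if $|\E|<b_1(\Gamma)$ then $(\E,D)$ is not maximal --- is precisely the content of the proposition, and your proposed argument for it does not close. After subdividing an edge $e$ with endpoints $u,w$, the candidate $D_u$ (extra chip at $u$) fails quasistability if and only if there is a set $V\subset V(\Gamma)$ with $w\in V$, $u\notin V$ and $\beta_D(V)\le 1$ (strict if $v_0\notin V$), and symmetrically for $D_w$ with a set $W\ni u$, $w\notin W$. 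These two violating sets $V$ and $W$ are \emph{not} complementary in general; they are only constrained to separate $u$ from $w$ in opposite directions. So the identity $\beta_D(V)+\beta_D(V^c)=\delta_V\ge 2$ from Lemma \ref{lem:cap} never gets applied to the pair you actually have. What Lemma \ref{lem:cap} gives for the pair $(V,W)$ is $\beta_D(V\cup W)+\beta_D(V\cap W)=\beta_D(V)+\beta_D(W)-|E(V,W)|<2-1=1$, since $e\in E(V\setminus W,W\setminus V)$; but this is perfectly consistent with quasistability of $D$ (both terms on the left can lie in $[0,1/2)$, e.g.\ for non-integral $\mu$), so no contradiction results.

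This is not a presentational issue: the assertion ``for a suitably chosen non-bridge edge, one of the two candidates is quasistable'' is true, but the paper obtains it only through the full strength of the proposition, and it is exactly the step where the real work lies. The paper's proof fixes $e$ incident to $v_0$ (allowing it to be a bridge, in which case both candidates necessarily fail), records the constraints $\beta_D(V_i)\le 1$ extracted from the two violating sets, disposes of the two-vertex case by your complementarity argument (the only case where $V_0$ and $V_1$ are forced to be complementary), and then runs an induction on $|V(\Gamma)|$: contract all edges in $E(v_0,v_1)$, apply the inductive hypothesis on the contracted graph to produce an edge $e'$ and a quasistable pseudo-divisor there, lift it back to $\Gamma$, and rule out a violating set for the lift by a three-case analysis that plays the new violating set against the old ones $V_i$ via Lemma \ref{lem:cap} and the quasistability upstairs. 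None of this inductive mechanism (or a substitute for it) appears in your proposal; the sentence ``choosing $e$ appropriately \dots\ is the technical heart of the proof'' is precisely the gap. To complete your write-up you would either have to reproduce an induction of this kind or find a genuinely new argument showing directly that some non-bridge edge admits a quasistable extension.
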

\begin{proof}
Set $g=b_1(\Gamma)$. It is sufficient to prove that if $(\E,D)$ is a $(v_0,\mu)$-quasistable pseudo-divisor on $\Gamma$ such that $|\E|<g$, then we can find a $(v_0,\mu)$-quasistable pseudo-divisor $(\E',D')$ on $\Gamma$ with $\E\subset \E'$ and $|\E'|=|\E|+1$, and such that $(\E',D')$ specializes to $(\E,D)$. By Corollary \ref{cor:unique}, we can assume that $g\ge 1$. By Proposition \ref{prop:spec}, we have that $D_\E$ is a $(v_0,\mu_\E)$-quasistable divisor on $\Gamma_\E$, thus we can reduce to the case in which $\E=\emptyset$. In what follows, given $\E'\subset E(\Gamma)$, we will denote by $v_e$ the exceptional vertex of $\Gamma^{\E'}$ over $e$, for every $e\in \E'$.\par
   If $\Gamma$ has a loop $e$ incident to a vertex $v_1$, we can take the pseudo-divisor $(\E',D')$, where $\E'=\{e\}$ and $D'(v_1)=D(v_1)+1$, $D'(v_e)=-1$ and $D'(v)=D(v)$ for every $v\in V(\Gamma^{\E'})\setminus\{v_1,v_e\}$. It is clear that $(\E',D')$ is $(v_0,\mu)$-quasistable and $(\E',D')$ specializes to $(\emptyset,D)$. Therefore, we can also assume that $\Gamma$ has no loops.\par
	Fix an edge $e$ incident to $v_0$ and let $v_1\in V(\Gamma)$ be the other vertex incident to $e$. For $i=0,1$, we define the divisor $D_i$ on $\Gamma^{\{e\}}$ as 
\[
D_i(v):=\begin{cases}
\begin{array}{ll}
-1,&\text{if $v=v_e$};\\
D(v)+1,&\text{if $v=v_{1-i}$};\\
D(v),&\text{if $v\neq v_{1-i},v_e$.}
\end{array}
\end{cases}
\]
 If at least one between $D_0$ and $D_1$ is $(v_0,\mu^{\{e\}})$-quasistable, then we are done, since $(\{e\},D_i)$ specializes to $(\emptyset,D)$ for $i=0,1$. Otherwise, there are subsets $V_0', V_1'\subsetneq V(\Gamma^{\{e\}})$ such that, for $i=0,1$, we have 
\begin{equation}
\label{eq:vi}
\beta_{D_i}(V_i')\leq0,\text{ with strict inequality if $v_0\notin V_i'$}.
\end{equation}
Fix $i\in\{0,1\}$ and set $V_i:=V_i'\setminus\{v_e\}\subset V(\Gamma)$. We have 
\[
\beta_{D_i}(V_i')-\beta_D(V_i)=\deg(D_i|_{V_i'})-\deg(D|_{V_i})+\frac{\delta_{\Gamma^{\{e\}},V_i'}-\delta_{\Gamma,V_i}}{2}.
\]
By the table below,
\[
\begin{array}{l|c|c}
\{v_e,v_0,v_1\}\cap V_i'& \deg(D_i|_{V_i'})-\deg(D|_{V_i}) & \delta_{\Gamma^{\{e\}},V_i'} -\delta_{\Gamma,V_i}\\
\hline
\{v_e,v_0,v_1\}&\phantom{-}0&0\\
\{v_e,v_{1-i}\}&\phantom{-}0&0\\
\{v_e,v_{i}\}&-1&0\\
\{v_0,v_1\}&\phantom{-}1&2\\
\{v_e\}&-1&2\\
\{v_i\}&\phantom{-}0&0\\
\{v_{1-i}\}&\phantom{-}1&0\\
\emptyset&\phantom{-}0&0
\end{array}
\]
we deduce that, if $\{v_e,v_0,v_1\}\cap V_i'\ne\{v_e,v_i\}$, then $\beta_{D_i}(V_i')\geq\beta_D(V_i)$, and this contradicts Equation \eqref{eq:vi}. It follows that $\{v_e,v_0,v_1\}\cap V_i'=\{v_e,v_i\}$. This implies that $\beta_{D_i}(V_i')=\beta_D(V_i)-1$, which in turn implies that 
\begin{equation}
\label{eq:vi1}
\beta_D(V_i)\leq1,\text{ with strict inequality if $i=1$}.
\end{equation}\par

  If $V(\Gamma)=\{v_0, v_1\}$, then we have $V_i=\{v_i\}$, and hence $\beta_D(\{v_0\})+\beta_D(\{v_1\})<2$. However, by Lemma \ref{lem:cap}, we have that $\beta_D(\{v_0\})+\beta_D(\{v_1\})$ is the number of edges between $v_0$ and $v_1$ which is at least 2, since $g\ge 1$. This gives rise to a contradiction, so we are done when $\Gamma$ has exactly two vertices.\par

	 We now proceed by induction on the number of vertices of $\Gamma$. Let $\iota\col\Gamma\to\Gamma'$ be the contraction of all edges in $E(v_0,v_1)$. By the induction hypothesis, there are an edge $e'\in E(\Gamma')\subset E(\Gamma)$ and a $(\iota(v_0),\iota_*(\mu))$-quasistable pseudo-divisor $(\{e'\},\widehat{D})$ on $\Gamma'$ such that $(\{e'\},\widehat{D})$ specializes to $(\emptyset,\iota_*(D))$. This means that if $w_0$ and $w_1$ are the vertices incident to $e'$, then $\widehat{D}(w_0)=\iota_*(D)(w_0)+1$, $\widehat{D}(v_{e'})=-1$ and $\widehat{D}(w)=\iota_*(D)(w)$ for every $w\in V(\Gamma')\setminus\{w_0,v_{e'}\}$. For $i=0,1$, if $\iota^{-1}(w_i)$ is a single vertex, we abuse notation and write $w_i$ for such vertex. Otherwise, we have that $\iota^{-1}(w_i)=\{v_0,v_1\}$, and again we abuse notation and write $w_i$ for the vertex in $\{v_0,v_1\}$ attached to $e'$. We define the pseudo-divisor $(\{e'\},D')$ on $\Gamma$ as 
\[
D'(v):=\begin{cases}
\begin{array}{ll}
-1&\text{if $v=v_{e'}$;}\\
D(v)+1&\text{if $v=w_0$;}\\
D(v)&\text{if $v\neq w_0,v_{e'}$.}
\end{array}
\end{cases}
\]
Then $\iota_*(\{e'\},D')=(\{e'\},\widehat{D})$ and there is a specialization of $(\{e'\},D') $ to $(\emptyset,D)$. All that is left to prove is that $(\{e'\},D')$ is $(v_0,\mu)$-quasistable. By contradiction, assume that there is a subset $V'\subsetneq V(\Gamma^{\{e'\}})$ such that 
\begin{equation}
\label{eq:v'}
\beta_{D'}(V')\leq 0,\text{ with strict inequality if $v_0\notin V'$}. 
\end{equation}
By an argument similar to the one using the above table, we have $\{w_0,w_1, v_{e'}\}\cap V'=\{w_1,v_{e'}\}$ and hence $\beta_{D'}(V')=\beta_D(V)-1$, where $V=V'\setminus\{v_{e'}\}\subset V(\Gamma)$, Again, this implies that 
\begin{equation}
\label{eq:v'1}
\beta_{D}(V)\leq 1,\text{ with strict inequality if $v_0\notin V$}. 
\end{equation} 
Moreover, if $|\{v_0,v_1\}\cap V'|\neq 1$, then $\beta_{D'}(V')=\beta_{\widehat{D}}(\iota^{\{e'\}}(V'))$, because $\iota_*(\{e'\},D')=(\{e'\},\wh{D})$
and $V'$ is the inverse image of $\iota^{\{e'\}}(V')$ via $\iota^{\{e'\}}$. This contradicts Equation \eqref{eq:v'} and the fact that $\wh D$ is $(\iota(v_0),\iota_*(\mu))$-quasistable. Therefore there exists $i\in\{0,1\}$ such that $v_{1-i}\in V$ and $v_{i}\notin V$.\par
  By Lemma \ref{lem:cap} we have
\begin{equation}
\label{eq:2}
\beta_D(V\cup V_i)+\beta_D(V\cap V_i)+|E(V,V_i)|=\beta_D(V)+\beta_D(V_i)<2,
\end{equation}
where the inequality comes from Equations \eqref{eq:vi1} and \eqref{eq:v'1} (note that either $v_0\notin V$ or $i=1$, hence the inequality is indeed strict).\par
Note that $\beta_D(V\cup V_i)\geq0$ and $\beta_D(V\cap V_i)\geq0$, since $(\emptyset,D)$ is $(v_0,\mu)$-quasistable. We have three cases to consider.

 In the first case, $w_0\in V_i$ and $w_1\notin V_i$, hence $|E(V,V_i)|\geq2$, because the set $E(V,V_i)$ contains $e$ and $e'$. This contradicts Equation \eqref{eq:2}.\par 

 In the second case, $w_0\notin V_i$, hence $\beta_{D'}(V'\cup V_i)=\beta_D(V\cup V_i)-1$, because $\mu(V'\cup V_i)=\mu(V\cup V_i)$, $\deg(D'|_{V'\cup V_i})=\deg(D|_{V\cup V_i})+D'(v_{e'})$ and $\delta_{\Gamma^{\{e'\}},V'\cup V_i}=\delta_{\Gamma, V\cup V_i}$ (recall that $w_1\in V'$). However, since $v_0,v_1\in V'\cup V_i$, we have 
\[
\beta_{D'}(V'\cup V_i)=\beta_{\widehat{D}}(\iota^{\{e'\}}(V'\cup V_i))\geq 0,
\]
hence $\beta_D(V\cup V_i)\geq1$. But the set $E(V,V_i)$ contains $e$, hence $|E(V,V_i)|\geq 1$, and this contradicts Equation \eqref{eq:2}.\par

  In the third case, $w_0,w_1\in V_i$. The argument is similar to the one used in the second case. Indeed, we have $|E(V,V_i)|\geq 1$ and $\beta_{D'}(V'\cap (V_i\cup\{v_{e'}\}))=\beta_D(V\cap V_i)-1$. Since $v_0,v_1\notin V'\cap (V_i\cup\{v_{e'}\})$, we also have  
	\[
\beta_{D'}(V'\cap (V_i\cup\{v_{e'}\})=\beta_{\widehat{D}}(\iota^{\{e'\}}(V'\cap (V_i\cup\{v_{e'}\}))\geq 0,
\]
hence $\beta_D(V\cap V_i)\geq1$, contradicting again  Equation \eqref{eq:2}.
\end{proof}

\begin{Prop}
\label{prop:jclosed}
Let $\Gamma$ be a graph, $v_0$ a vertex of $\Gamma$, and $\mu$ be a polarization on $\Gamma$. If $\iota\col\Gamma\ra\Gamma'$ is  
a specialization of graphs, then the induced map $\iota_*\col \mathcal{QD}_{v_0,\mu}(\Gamma)\to \mathcal{QD}_{\iota(v_0),\iota_*(\mu)}(\Gamma')$ is closed and surjective.
\end{Prop}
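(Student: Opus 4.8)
The plan is to prove the two properties separately, deriving surjectivity from closedness together with an explicit lifting of the maximal elements. Throughout I use that $\iota_*$ is order preserving, hence continuous, and the characterization of closed maps of posets recalled in Subsection \ref{sub:tropicalcurves}: the map $\iota_*$ is closed exactly when every relation $(\E_1',D_1')\le(\E_2',D_2')$ in $\mathcal{QD}_{\iota(v_0),\iota_*(\mu)}(\Gamma')$ whose larger term lies in the image can be lifted, i.e. for each preimage $(\E_2,D_2)$ of $(\E_2',D_2')$ there is $(\E_1,D_1)\le(\E_2,D_2)$ in $\mathcal{QD}_{v_0,\mu}(\Gamma)$ with $\iota_*(\E_1,D_1)=(\E_1',D_1')$.

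For closedness, write $\E_\iota:=E(\Gamma)\setminus E(\Gamma')$ for the contracted edges, so that $\iota_*(\E_2,D_2)=(\E_2\cap E(\Gamma'),\iota_*^{\E_2}(D_2))=(\E_2',D_2')$ forces $\E_2\cap E(\Gamma')=\E_2'$ and $\iota_*^{\E_2}(D_2)=D_2'$. The relation $(\E_1',D_1')\le(\E_2',D_2')$ is realized by a specialization $j\col \Gamma'^{\E_2'}\to\Gamma'^{\E_1'}$, compatible with $\mathrm{id}_{\Gamma'}$, which un-subdivides the edges of $\E_2'\setminus\E_1'$ (absorbing each exceptional vertex, of value $-1$, into a prescribed endpoint) and satisfies $j_*(D_2')=D_1'$. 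I set $\E_1:=\E_2\setminus(\E_2'\setminus\E_1')$, so that $\E_1\subset\E_2$ and $\E_1\cap E(\Gamma')=\E_1'$, and I mirror $j$ on $\Gamma$: let $\sigma\col\Gamma^{\E_2}\to\Gamma^{\E_1}$ be the specialization un-subdividing the same edges $\E_2'\setminus\E_1'$ with the same merge directions, and put $D_1:=\sigma_*(D_2)$. Then $(\E_2,D_2)\ge(\E_1,D_1)$ by construction. Since $\sigma$ only absorbs exceptional vertices (on which the divisor value is $-1$ and the polarization is $0$) into real vertices and fixes $v_0$, we have $\sigma_*(\mu^{\E_2})=\mu^{\E_1}$, so Proposition \ref{prop:spec}(i) applied to $\sigma$ shows that $D_1$ is $(v_0,\mu^{\E_1})$-quasistable; thus $(\E_1,D_1)\in\mathcal{QD}_{v_0,\mu}(\Gamma)$. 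Finally, the specializations $\iota^{\E_2},\sigma,j,\iota^{\E_1}$ fit into a commutative square (both composites $\Gamma^{\E_2}\to\Gamma'^{\E_1'}$ contract over $\E_\iota$ and un-subdivide $\E_2'\setminus\E_1'$ with matching directions), whence
\[
\iota_*^{\E_1}(D_1)=\iota_*^{\E_1}\sigma_*(D_2)=j_*\iota_*^{\E_2}(D_2)=j_*(D_2')=D_1',
\]
giving $\iota_*(\E_1,D_1)=(\E_1',D_1')$, as required.

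For surjectivity, I first lift the maximal elements. By Proposition \ref{prop:chain} a maximal $(\E',D')$ has $\E'$ of size $b_1(\Gamma')$, i.e. $\E'$ is the complement of a spanning tree $T'$ of $\Gamma'$. Extending $T'$ to a spanning tree $T$ of $\Gamma$ with $T\setminus T'\subset \E_\iota$ (possible since $T'$ spans $\Gamma/\E_\iota=\Gamma'$), I set $\E:=E(\Gamma)\setminus T$, so that $\E\cap E(\Gamma')=\E'$ and $\E$ is the complement of a spanning tree of $\Gamma$; by Corollary \ref{cor:unique} there is a unique maximal $(\E,D)\in\mathcal{QD}_{v_0,\mu}(\Gamma)$. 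Its image $\iota_*(\E,D)$ has subdivision set $\E'$, is $(\iota(v_0),\iota_*(\mu))$-quasistable by Proposition \ref{prop:spec}(i), and is therefore maximal, so it equals $(\E',D')$ by the uniqueness in Corollary \ref{cor:unique}. Hence every maximal element lies in the image. As $\iota_*$ is closed and $\mathcal{QD}_{v_0,\mu}(\Gamma)$ is closed in itself, its image is a closed subset, i.e. a lower set; a lower set containing all maximal elements of the ranked poset $\mathcal{QD}_{\iota(v_0),\iota_*(\mu)}(\Gamma')$ is the whole poset, proving surjectivity.

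The main obstacle is the bookkeeping in the closedness step: one must choose the merge directions of $\sigma$ on $\Gamma$ to match those of $j$ on $\Gamma'$ and verify, from the definition of compatible specializations in Subsection \ref{subsec:graphs}, that the resulting square of subdivided graphs genuinely commutes, so that the displayed computation of $\iota_*^{\E_1}(D_1)$ is valid. Once this compatibility is in place, quasistability of the lift is automatic from Proposition \ref{prop:spec}(i), and surjectivity follows formally from closedness.
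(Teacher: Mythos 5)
Your proof is correct, and its overall architecture (closedness via the lifting criterion for posets, then surjectivity by lifting maximal elements and using that the image of a closed map is a lower set containing all maximal elements) is exactly the paper's. The closedness step is essentially identical to the paper's: the paper also constructs the smaller preimage by deleting from the subdivision set the edges that get un-subdivided, mirrors the un-subdivision upstairs, and invokes a commutative square of compatible specializations to compute the pushforward; your version is, if anything, more careful, since you verify quasistability of the constructed element explicitly via Proposition \ref{prop:spec}(i) together with the observation $\sigma_*(\mu^{\E_2})=\mu^{\E_1}$ (a point the paper leaves implicit), you spell out the matching of merge directions needed for the square to commute, and you avoid the paper's initial (and, for this argument, unnecessary) reduction to single-edge contractions. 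The one genuine divergence is in lifting a maximal element $(\E',D')$: the paper keeps the subdivision set $\E'\subset E(\Gamma)$, notes that $\Gamma_{\E'}$ is connected, invokes Theorem \ref{thm:esteves} to produce a $(v_0,\mu_{\E'})$-quasistable divisor on $\Gamma_{\E'}$, and assembles the lift via Proposition \ref{prop:spec}(ii); you instead enlarge $\E'$ to the complement $\E$ of a spanning tree $T$ of $\Gamma$ extending $T'$ and obtain the lift directly from Corollary \ref{cor:unique}. Your route is slightly more self-contained, since Corollary \ref{cor:unique} rests only on the elementary Propositions \ref{prop:spec} and \ref{prop:tree}, whereas Theorem \ref{thm:esteves} defers to Esteves' algebro-geometric result or to Theorem \ref{thm:quasistable}; both routes then identify the image with $(\E',D')$ by the uniqueness statement of Corollary \ref{cor:unique} and conclude in the same way.
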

\begin{proof}
We can assume that $\iota$ is the contraction of a single edge $e\in E(\Gamma)$, because a composition of closed and surjective maps is also closed and surjective.\par
 We begin proving that $\iota_*$ is closed. It is enough to prove that if $(\E_1',D_1')\geq (\E_2',D_2')$ in $\mathcal {QD}_{\iota(v_0),\iota_*(\mu)}(\Gamma')$ and $(\E_1',D_1')=\iota_*(\E_1,D_1)$ for some $(\E_1,D_1)$ in $\mathcal{QD}_{v_0,\mu}(\Gamma)$ then there is $(\E_2,D_2)\in \mathcal{QD}_{v_0,\mu}(\Gamma)$ such that $(\E_1,D_1)\geq (\E_2,D_2)$ and $\iota_*(\E_2,D_2)=(\E_2',D_2')$. Since $(\E_1',D_1')\geq (\E_2',D_2')$, there are an inclusion $\E_2'\subset \E_1'$ and a compatible specialization $\Gamma'^{\E_1'}\ra\Gamma'^{\E_2'}$ that takes $D_1'$ to $D_2'$. Since $\iota_*(\E_1,D_1)=(\E_1',D_1')$, we have that $\E_1'=\E_1\cap E(\Gamma')$ and the induced specialization $\Gamma^{\E_1}\ra\Gamma'^{\E_1'}$ takes $D_1$ to $D_1'$. We set $\E_2:=\E_1\setminus E(\Gamma')$. Then we have a commutative diagram of specializations
\[
\begin{CD}
\Gamma^{\E_1}@>>> \Gamma^{\E_2}\\
@VVV @VVV\\
\Gamma'^{\E_1'}@>>> \Gamma'^{\E_2'}
\end{CD}
\]
and the pushforward of $D_1$ to $\Gamma'^{\E_2'}$ must be $D_2'$. 
Define $D_2$ as the pushforward of $D_1$ to $\Gamma^{\E_2}$. Since the diagram is commutative, we have $\iota_*(\E_2,D_2)=(\E'_2,D'_2)$ which concludes the proof that $\iota_*$ is closed.

 We now prove the surjectivity of $\iota_*$. Since $\iota_*$ is closed, it is enough to prove that $\iota_*^{-1}(\E',D')$ is nonempty for every maximal element $(\E',D')$ of $\mathcal{QD}_{\iota(v_0),\iota_*(\mu)}(\Gamma')$. By Proposition \ref{prop:chain}, a pseudo-divisor $(\E',D')$ is maximal if and only if $\E'$ is the complement of a spanning tree.  
  So let $\E'$ be the complement of a spanning tree on $\Gamma'$. Then, by Corollary \ref{cor:unique}, there exists a unique $(\iota(v_0),\iota_*(\mu))$-quasistable pseudo-divisor $(\E',D')$ on $\Gamma'$. Since $\E'$ is nondisconnecting on $\Gamma$, then $\Gamma_{\E'}$ is connected, which means that there exists a $(v_0,\mu_{\E'})$-quasistable divisor $D$ on $\Gamma_{\E'}$ (see Theorem \ref{thm:esteves}). We can use Proposition \ref{prop:spec} to get a $(v_0,\mu)$-quasistable divisor $(\E',D)$ on $\Gamma$. Since $\iota_*(\E',D)=(\E',\iota_{\E'*}(D))$ is $(\iota(v_0),\iota_*(\mu))$-quasistable by Proposition \ref{prop:spec}, the uniqueness of $(\E',D')$ implies that $\iota_*(\E',D)=(\E',D')$, and we are done.
\end{proof}
\begin{Cor}
\label{cor:sep}
Preserve the hypothesis of Proposition \ref{prop:jclosed}.
If $\iota$ is a contraction of separating edges, then $\iota_*\col \mathcal{QD}_{v_0,\mu}(\Gamma)\to \mathcal{QD}_{\iota(v_0),\iota_*(\mu)}(\Gamma')$ is a homeomorphism.
\end{Cor}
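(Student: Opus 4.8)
The plan is to deduce the statement from Proposition~\ref{prop:jclosed} by upgrading the closed surjection $\iota_*$ to a homeomorphism. The key topological observation I would use is that a continuous closed bijection of topological spaces is automatically a homeomorphism, since its set-theoretic inverse sends closed sets to closed sets. As $\iota_*$ is continuous (being order preserving) and, by Proposition~\ref{prop:jclosed}, closed and surjective, everything reduces to proving that $\iota_*$ is injective when $\iota$ contracts separating edges.

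First I would reduce to the contraction of a single separating edge $e$. A contraction of separating edges factors as a composition of single-bridge contractions, because contracting a bridge leaves the cycle space unchanged and hence keeps every other bridge a bridge; since a composition of homeomorphisms is a homeomorphism, it suffices to treat one bridge at a time.

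Now let $e$ have endpoints $u,w$ and write $\bar v:=\iota(u)=\iota(w)$. If $(\E,D)$ is $(v_0,\mu)$-quasistable, then $\E$ is nondisconnecting by Proposition~\ref{prop:spec}(ii); as $e$ is a bridge it is disconnecting, so $e\notin\E$. Consequently $\E\subset E(\Gamma')$, the specialization $\iota^\E\col\Gamma^\E\to\Gamma'^\E$ again contracts the bridge $e$, and $\iota_*(\E,D)=(\E,\iota^\E_*(D))$ where $\iota^\E_*(D)$ equals $D$ away from $\bar v$ and satisfies $\iota^\E_*(D)(\bar v)=D(u)+D(w)$. Recovering $(\E,D)$ from its image therefore amounts to splitting the single number $D(u)+D(w)$ back into its two summands, which is the heart of the argument.

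The splitting is forced by quasistability across the bridge. Let $A\subset V(\Gamma^\E)$ be the vertex set of the component of $u$ obtained by deleting $e$ from $\Gamma^\E$, so that $u\in A$, $w\notin A$, and $\delta_A=1$ since $e$ is the only edge of the cut. Then $\beta_D(A)=\deg(D|_A)-\mu^\E(A)+\tfrac12$, and quasistability combined with Remark~\ref{rem:Vc} gives $0\le\beta_D(A)\le\delta_A=1$ with exactly one inequality strict, according to whether $v_0\in A$. Hence $\deg(D|_A)-\mu^\E(A)$ lies in a half-open interval of length one, so $\deg(D|_A)$ is the unique integer it contains; crucially, this value depends only on $\mu^\E$ and on which side of $e$ carries $v_0$, not on $D$. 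Since every vertex of $A$ other than $u$ is identified with no other vertex by $\iota^\E$, the quantity $D(u)=\deg(D|_A)-\sum_{x\in A\setminus\{u\}}D(x)$ is determined by $\iota^\E_*(D)$, and then so is $D(w)=\iota^\E_*(D)(\bar v)-D(u)$. This yields injectivity, and hence the homeomorphism. I expect the only delicate point to be exactly this disentangling of the pushforward, which works precisely because a bridge yields a cut of size one and the strict inequalities of quasistability then pin down the degree on each side.
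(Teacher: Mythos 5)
Your proof is correct and follows essentially the same route as the paper: reduce to injectivity via the closed surjection of Proposition~\ref{prop:jclosed}, reduce to a single bridge, observe that $e\notin\E$ forces the two pseudo-divisors to share the same $\E$, and then use the cut of size one across the bridge together with the strict/non-strict quasistability inequalities to pin down $\deg(D|_A)$ as the unique integer in a half-open unit interval. The only cosmetic difference is that you treat both locations of $v_0$ symmetrically, whereas the paper fixes $v_0$ on one side of the bridge.
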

\begin{proof}
 By Proposition \ref{prop:jclosed}, it is enough to prove that $\iota_*$ is injective. 
We can assume that $\iota$ is the contraction of a single edge $e\in E(\Gamma)$. Let $v_1$ and $v_2$ the vertices incident to $e$. Let $\Gamma_1$ be the connected component of $\Gamma_{\{e\}}$ containing $v_1$ and assume that $v_0\in\Gamma_1$. 
 Let $(\E_1,D_1)$ and $(\E_2,D_2)$ be $(v_0,\mu)$-quasistable divisors of $\Gamma$ such that $\iota_*(\E_1,D_1)=\iota_*(\E_2,D_2)$.
 We have $e\notin \E_1,\E_2$, since $e$ is a separating edge (see Proposition \ref{prop:spec}). It follows that $\E_1=\E_2=:\E$. Therefore, $D_1$ and $D_2$ are divisor on $\Gamma^\E$, so we can assume that $\E=\emptyset$ (upon changing $\Gamma,\Gamma'$ with $\Gamma^\E,\Gamma'^\E$).\par 
  Since $\iota_*(D_1)=\iota_*(D_2)$, we have  $D_1(v)=D_2(v)$ for every $v\notin\{v_1,v_2\}$. For $i\in\{1,2\}$ we have $0<\beta_{D_i}(V(\Gamma_1))\leq1$, which means that we have 
\[
\mu(V(\Gamma_1))-\frac{1}{2}<\deg(D_i|_{V(\Gamma_1)})\leq\mu(V(\Gamma_1))+\frac{1}{2}.
\]
This implies that $\deg(D_1|_{V(\Gamma_1)})=\deg(D_2|_{V(\Gamma_1)})$, because both are integers. We deduce that $D_1(v_1)=D_2(v_1)$, and hence $D_1=D_2$.
\end{proof}

\begin{Prop}
\label{prop:cod1}
Let $\Gamma$ be a graph, $v_0$ be a vertex of $\Gamma$, and $\mu$ be a polarization on $\Gamma$ of degree $d$. Then the poset $\mathcal{QD}_{v_0,\mu}(\Gamma)$ is connected in codimension 1.
\end{Prop}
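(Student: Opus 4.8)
The plan is to reduce the statement to the case $b_1=1$ by exploiting that the maximal elements of $\mathcal{QD}_{v_0,\mu}(\Gamma)$ are in bijection with spanning trees of $\Gamma$, and that any two spanning trees are linked by single edge swaps (Lemma \ref{lem:tree}). First I would fix the bookkeeping. By Proposition \ref{prop:chain} the poset is ranked of length $g:=b_1(\Gamma)$, the dimension of $(\E,D)$ equals $|\E|$, and $(\E,D)$ is maximal if and only if $\E$ is the complement of a spanning tree; codimension one then means $|\E|=g-1$. By Corollary \ref{cor:unique} such a maximal $(\E,D)$ is determined by $\E$ alone, so $T\mapsto (E(\Gamma)\setminus T,\,\cdot)$ is a bijection between spanning trees and maximal elements. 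Thus, given maximal elements $y,y'$ attached to spanning trees $T,T'$, it suffices to construct a path in codimension one joining them.

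For the \emph{global step} I would apply Lemma \ref{lem:tree} to obtain spanning trees $T=T_0,\dots,T_n=T'$ with $|T_{i-1}\cap T_i|=|V(\Gamma)|-2$, with corresponding maximal elements $y_0,\dots,y_n$. It is enough to join consecutive $y_{i-1},y_i$ by a codimension-one path and then concatenate, which is legitimate since successive subpaths share the endpoint $y_i$. For a single swap, set $\E_{i-1}=E(\Gamma)\setminus T_{i-1}$ and $\E_i=E(\Gamma)\setminus T_i$; then $\F:=\E_{i-1}\cap\E_i$ is nondisconnecting with $|\F|=g-1$, and $\E_{i-1}=\F\cup\{a\}$, $\E_i=\F\cup\{b\}$ for distinct edges $a,b$.

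The \emph{local reduction} uses that $\Gamma_\F$ is connected with $b_1(\Gamma_\F)=1$, and that by Proposition \ref{prop:spec} the open injection $\kappa_*\colon \mathcal{QD}_{v_0,\mu_\F}(\Gamma_\F)\to \mathcal{QD}_{v_0,\mu}(\Gamma)$ is an order isomorphism onto the upper set $\{(\E',D):\F\subset\E'\}$, raising dimension by $g-1$. Hence it carries maximal elements to maximal elements, codimension-one elements to codimension-one elements, and therefore codimension-one paths to codimension-one paths. Moreover $y_{i-1},y_i$ lie in the image and correspond to the maximal elements of $\mathcal{QD}_{v_0,\mu_\F}(\Gamma_\F)$ attached to $a$ and $b$, which are two of the edges of the unique cycle of $\Gamma_\F$. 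So the problem reduces to showing that $\mathcal{QD}_{v_0,\mu_\F}(\Gamma_\F)$, a ranked poset of length one, is connected in codimension one.

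For the \emph{base case} ($\Delta$ with $b_1(\Delta)=1$) I would first contract all separating edges: by Corollary \ref{cor:sep} this is an order isomorphism, so I may assume $\Delta$ is a cycle $C_m$. Its maximal elements $M_0,\dots,M_{m-1}$ correspond to the $m$ cycle edges, and each $M_j$ specializes to exactly two codimension-one elements, obtained by contracting the exceptional edge over $c_j$ to either endpoint (both specializations are quasistable by Proposition \ref{prop:spec}). Granting that the right specialization of $M_j$ and the left specialization of $M_{j+1}$ yield the same divisor, the incidence between the $M_j$ and the codimension-one elements is a single $2m$-cycle, which is connected, and walking around it joins any two $M_j$. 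I expect the \textbf{main obstacle} to be precisely this last point in the base case: verifying that the two specializations absorbing the $-1$ at the common vertex $u_{j+1}$ coincide, which by uniqueness of the quasistable representative (Theorem \ref{thm:esteves}) amounts to checking the two resulting divisors are equivalent. Everything else is formal manipulation of Lemmas \ref{lem:tree}, \ref{cor:unique}, \ref{cor:sep} and the embedding $\kappa_*$; the degenerate cases $m\le 2$ and $g\le 1$ (where the statement is immediate or vacuous) should be disposed of separately but present no difficulty.
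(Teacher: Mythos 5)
Your reduction steps are exactly the paper's: both you and the authors use Lemma \ref{lem:tree} plus the open injection $\kappa_*$ of Proposition \ref{prop:spec} to reduce to the case $b_1=1$, and then Corollary \ref{cor:sep} to reduce to a cycle. Those parts are correct. The gap is the step you flagged as the ``main obstacle'' in the base case, and it is not a verification you postponed --- it is false. The claim that the right specialization of $M_j$ and the left specialization of $M_{j+1}$ coincide fails in general, and Theorem \ref{thm:esteves} cannot rescue it because the two divisors need not be equivalent. Concretely, take $\Gamma=C_3$ with vertices $u_0=v_0,u_1,u_2$, edges $c_j=\overline{u_ju_{j+1}}$ (indices mod $3$), and $\mu\equiv 1/3$, so $d=1$. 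Writing divisors by their values at $(u_0,u_1,u_2;v_{c_j})$, Corollary \ref{cor:unique} gives, for the maximal elements $M_j=(\{c_j\},D_j)$, the divisors $D_0=(1,1,0;-1)$, $D_1=(0,1,1;-1)$, $D_2=(1,0,1;-1)$. The specialization of $M_0$ absorbing $-1$ at $u_1$ is $(1,0,0)$, while the specialization of $M_1$ absorbing $-1$ at $u_1$ is $(0,0,1)$; their difference is $u_0-u_2$, which is not principal on $C_3$ (principal divisors $(x,y,z)$ on $C_3$ satisfy $x\equiv y\equiv z \pmod 3$), so these are distinct, inequivalent quasistable divisors. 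Here the incidence graph is still a $6$-cycle, but with adjacencies $M_0\sim M_2$, $M_2\sim M_1$, $M_1\sim M_0$ realized by non-consecutive edges: which maximal elements share a codimension-one face depends on $\mu$ (your pattern does hold for $\mu\equiv 0$, $d=0$, which is what small examples suggest), so the gluing pattern cannot be prescribed a priori. This matters for connectivity: knowing only that each maximal element covers two minimal ones and each minimal one is covered by two maximal ones does not rule out the incidence graph being a disjoint union of several cycles.

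The paper's base case avoids identifying the shared faces altogether and argues by induction on the number of edges of the cycle. Fix an edge $e$ and let $(\{e\},D)$ be the unique maximal element supported on $e$ (Corollary \ref{cor:unique}); it specializes to exactly two divisors $(\emptyset,D_1)$, $(\emptyset,D_2)$, and all three have the same image under the contraction $\iota_*\col \mathcal{QD}_{v_0,\mu}(\Gamma)\to\mathcal{QD}_{\iota(v_0),\iota_*(\mu)}(\Gamma/\{e\})$. Surjectivity of $\iota_*$ (Proposition \ref{prop:jclosed}) together with the count $|\mathcal{QD}_{v_0,\mu}(\Gamma)|=2|E(\Gamma)|$ forces this fiber to be exactly those three elements and all other fibers to be singletons; hence, on incidence graphs, $\iota_*$ is the contraction of two edges meeting at the vertex $(\{e\},D)$, and connectivity follows from the inductive hypothesis on the smaller cycle. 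To salvage your write-up, replace the ``granting'' step by this contraction-and-counting argument (or any argument that does not presuppose which specializations coincide); everything before and after it can stand as is.
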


\begin{proof}
Set $g:=b_1(\Gamma)$. We start proving the result for graphs with $g=1$. In this case the statement is equivalent to prove that $\mathcal{QD}_{v_0,\mu}(\Gamma)$ is connected. Since $g=1$, by Corollary \ref{cor:sep} we can reduce to the case in which $\Gamma$ is a cycle.  \par
We proceed by induction on the number of edges. The statement is clear if there is a single edge (which is a loop). Assume that $\Gamma$ has at least two edges and fix an edge $e\in E(\Gamma)$. By Corollary \ref{cor:unique}, there exists exactly one $(v_0,\{e\})$-quasistable pseudo-divisor $(\{e\},D)$ of degree-$d$ on $\Gamma$. Since $e$ is not a loop, there are exactly two divisors $D_1$ and $D_2$ on $\Gamma$ such that $(\{e\},D)\ra (\emptyset,D_i)$ for $i=1,2$.
	Let $\iota\col\Gamma\to\Gamma/\{e\}$ be the contraction of the edge $e$. Then 
\[
(\emptyset,D'):=\iota_*(\{e\},D)=\iota_*(\emptyset, D_1)=\iota_*(\emptyset, D_2).
\]
We claim that the set of $(v_0,\mu)$-quasistable pseudo-divisors on $\Gamma$ specializing to the pseudo-divisor $(\emptyset,D')$ via $\iota$ is precisely $\{(\{e\},D),(\emptyset,D_1),(\emptyset,D_2)\}$. Indeed, the map $\iota_*\col \mathcal{QD}_{v_0,\mu}(\Gamma)\to \mathcal{QD}_{\iota(v_0),\iota_*(\mu)}(\Gamma/\{e\})$ is surjective by Proposition \ref{prop:jclosed}. However 
\[
|\mathcal{QD}_{v_0,\mu}(\Gamma)|=2|E(\Gamma)|\quad\text{and}\quad| \mathcal{QD}_{\iota(v_0),\iota_*(\mu)}(\Gamma/\{e\})|=2|E(\Gamma')|,
\]
and hence 
\[
|\mathcal{QD}_{v_0,\mu}(\Gamma)|=|\mathcal{QD}_{\iota(v_0),\iota_*(\mu)}(\Gamma/\{e\})|+2.
\]
Since $|\iota_*^{-1}(\emptyset, D')|\geq3$,  the equality must hold, from which the claim follows.\par
  If we consider $\mathcal{QD}_{v_0,\mu}(\Gamma)$ as a graph whose edges are its minimal elements and whose vertices are its remaining elements (which are maximal), then $\iota_*$ is simply the contraction of the two edges corresponding to $(\emptyset,D_1)$ and $(\emptyset,D_2)$, and meeting each other at the vertex corresponding to $(\{e\},D)$. Since, by the induction hypothesis,  $\mathcal{QD}_{\iota(v_0),\iota_*(\mu)}(\Gamma/\{e\})$ is connected, we have that $\mathcal{QD}_{v_0,\mu}(\Gamma)$ is connected as well.\par
	Now we prove the general case. By Lemma \ref{lem:tree}, it is sufficient to prove that if $\E_1$ and $\E_2$ are nondisconnecting subsets of $E(\Gamma)$ of cardinality $g$ such that $|\E_1\cap\E_2|=g-1$, then there is a path in codimension 1 in $\mathcal{QD}_{v_0,\mu}(\Gamma)$  connecting any two pseudo-divisors $(\E_1,D_1)$ and $(\E_2,D_2)$. Let $\Gamma':=\Gamma_{\E_1\cap \E_2}$. Note that $g_{\Gamma'}=1$. We have a open injection $\mathcal{QD}_{v_0,\mu_{\E_1\cap\E_2}}(\Gamma')\to \mathcal{QD}_{v_0,\mu}(\Gamma)$. Then it is sufficient to prove that there exists a path from $(\E_1\setminus \E_2,D_1)$ to $(\E_2\setminus \E_1,D_2)$ in $\mathcal{QD}_{v_0,\mu_{\E_1\cap\E_2}}(\Gamma')$. The existence of this path follows from the case $g_{\Gamma'}=1$ already proved.
	\end{proof}

Let $d$ be an integer. 
We say that $\mu$ is a \emph{polarization (of degree $d$)}  on $\Graph_{g,1}$ if $\mu$ is a collection of polarizations $\mu_\Gamma$ of degree $d$ for every genus-$g$ weighted stable graph with $1$ leg $\Gamma$, such that $\mu_{\Gamma'}=\iota_*(\mu_\Gamma)$ for every specialization $\iota\col\Gamma\to\Gamma'$. In this case, we call $\mu$ a \emph{universal genus-$g$ polarization (of degree $d$)}. This polarization extends to every genus-$g$ semistable graph, since every genus-$g$ semistable graph is a subdivision of a stable graph. 
\begin{Exa}
\label{exa:pol}
Let $d$ be an integer. We give two examples of universal genus-$g$ polarizations of degree $d$. We have the \emph{canonical polarization}, given by
\[
\mu_\Gamma(v)=\frac{d(2w(v)-2+\val(v))}{2g-2}.
\]
We also have a polarization concentrated at the marked vertex $v_0$ of $\Gamma$, given by
\[
\mu_\Gamma(v)=\begin{cases}
               \begin{array}{ll}
							0,&\text{ if $v\neq v_0$;}\\
							d,&\text{ if $v=v_0$}.
							\end{array}
							\end{cases}
\]	
Note that any linear combination of the above universal polarizations gives rise to a  universal genus-$g$ polarization of degree $d$. These are the only universal polarizations one can consider on $\Graph_{g,1}$: this is proved in \cite[Section 5]{KP}.					
\end{Exa}

  If $\mu$ is a universal genus-$g$ polarization, we define the category $\mathbf{QD}_{\mu,g}$ whose objects are triples $(\Gamma,\E,D)$ where $\Gamma$ is a genus-$g$ stable  weighted graph with $1$ leg and $(\E,D)$ is a $(v_0,\mu_\Gamma)$-quasistable pseudo-divisor of $\Gamma$, and whose morphisms are given by the specializations. We define the poset $\mathcal{QD}_{\mu,g}$ associated to $\mathbf{QD}_{\mu,g}$ as
\[
\mathcal{QD}_{\mu,g}:=\{(\Gamma,\E,D);\;(\Gamma,\E, D)\in \mathbf{QD}_{\mu,g}\}/\sim,
\]
where $(\Gamma,\E,D)\sim (\Gamma',\E',D')$ if there exists an isomorphism $\iota\col\Gamma\ra\Gamma'$ such that $(\E',D')=\iota_*(\E,D)$, and the ordering is given by specializations.

We end this section studying some topological properties of $\mathcal{QD}_{\mu,g}$.

\begin{Thm}
\label{thm:maingraph}
Let $\mu$ be a universal genus-$g$ polarization of degree $d$. Then the poset $\mathcal{QD}_{\mu,g}$ has pure dimension $4g-2$ and is connected in codimension $1$. Moreover the natural forgetful map $f\col \mathcal{QD}_{\mu,g}\to \grap_{g,1}$ is continuous and for every genus-$g$ weighted graph $\Gamma$ with 1 leg, we have 
\[
f^{-1}([\Gamma])=\mathcal{QD}_{v_0,\mu_\Gamma}(\Gamma)/\Aut(\Gamma).
\]
\end{Thm}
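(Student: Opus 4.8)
The plan is to study $\Posg$ through the forgetful map $f$, reducing each assertion to its fixed-graph counterpart in Propositions \ref{prop:chain}, \ref{prop:cod1} and \ref{prop:jclosed}. I would first dispose of continuity and the fiber formula. Since $f$ sends $(\Gamma,\E,D)$ to $[\Gamma]$ and any specialization of pseudo-divisors induces a specialization of the underlying graphs, the map $f$ is order preserving, hence continuous. For the fiber, every class in $f^{-1}([\Gamma])$ has a representative whose first entry is literally $\Gamma$, and two such representatives are identified exactly when they differ by an element of $\Aut(\Gamma)$; as $\mu$ is universal, $\Aut(\Gamma)$ preserves $\mu_\Gamma$ and therefore acts on $\Pos(\Gamma)$, giving $f^{-1}([\Gamma])=\Pos(\Gamma)/\Aut(\Gamma)$ as posets.

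The technical heart of the dimension statement is the formula $\dim_{\Posg}(\Gamma,\E,D)=|E(\Gamma)|+|\E|$. I would prove it by classifying covering relations: any specialization lowering $|E(\Gamma)|+|\E|$ by more than one factors through an intermediate quasistable pseudo-divisor (first delete one edge from $\E$ by merging its exceptional vertex with an endpoint, then contract), so every covering relation is one of two elementary moves — contracting a single edge of $\Gamma$ outside $\E$, or deleting a single edge from $\E$ — each lowering $|E(\Gamma)|+|\E|$ by exactly one, the quasistability being preserved in both cases by Proposition \ref{prop:spec}. The unique minimal element is the one-vertex weighted graph with $\E=\emptyset$ (value $0$), and deleting the edges of $\E$ and then contracting the edges of $\Gamma$ one at a time realizes a chain below $(\Gamma,\E,D)$ of length $|E(\Gamma)|+|\E|$, proving the formula. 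For purity I would then show, using surjectivity of $\iota_*$ from Proposition \ref{prop:jclosed}, that $(\Gamma,\E,D)$ is maximal if and only if $\Gamma$ is maximal in $\grap_{g,1}$ and $(\E,D)$ is maximal in $\Pos(\Gamma)$. Maximal graphs are trivalent and weightless with $|E(\Gamma)|=3g-2$ and $b_1(\Gamma)=g$, and maximal fibers have $|\E|=b_1(\Gamma)=g$ by Proposition \ref{prop:chain}, so every maximal element has dimension $4g-2$. Since there is a single minimal element, every maximal chain runs from it to a maximal element and the formula shows it has length $4g-2$; thus $\Posg$ is ranked of length $4g-2$, hence pure of dimension $4g-2$.

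The main work, and the main obstacle, is connectedness in codimension one. I would first record that the codimension of $(\Gamma,\E,D)$ equals $4g-2-|E(\Gamma)|-|\E|$, so the codimension-one elements are of two kinds: trivalent $\Gamma$ with $|\E|=g-1$, or a codimension-one graph $\Theta$ with $b_1(\Theta)=g$ carrying a maximal fiber element ($|\E|=g$). Given two maximal elements over the \emph{same} trivalent $\Gamma$, I would connect them by a codimension-one path inside the fiber $\Pos(\Gamma)$, available from Proposition \ref{prop:cod1}; this is legitimate because for trivalent $\Gamma$ the codimension in the fiber (namely $g-|\E|$) coincides with the codimension in $\Posg$, and codimension-one paths descend through the quotient by $\Aut(\Gamma)$. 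For maximal elements over \emph{different} trivalent graphs I would invoke the connectedness in codimension one of $\grap_{g,1}$ (equivalently of $M^{\trop}_{g,1}$): the bridging codimension-one graphs are necessarily non-loop contractions, since a loop contraction is a specialization of a single trivalent graph, and hence satisfy $b_1(\Theta)=g$. Over such a $\Theta$ I would pick a maximal pseudo-divisor $\theta$; by surjectivity of $\iota_*$ its preimage over each adjacent trivalent graph is forced to have $|\E|=g$ with the contracted edge outside $\E$ (since $|\E_\theta|=g$ already), hence is maximal and covers $\theta$. Interleaving these bridges with the intra-fiber paths produces a codimension-one path between the two given maximal elements meeting the precise definition.

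The delicate points on which I expect to spend the most care are verifying that every covering relation is elementary, so that the dimension formula is exact, and assembling the fiber-wise paths of Proposition \ref{prop:cod1} together with the graph-level flips into a single path satisfying conditions (i)--(iv), in particular checking that each bridge lands on maximal elements and is a covering relation. The one external ingredient is the codimension-one connectedness of $\grap_{g,1}$, which I would cite rather than reprove.
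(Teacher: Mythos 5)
Your proposal is correct and follows essentially the same route as the paper's proof: continuity and the fiber formula via the natural map $\Pos(\Gamma)\to\Posg$, purity by showing maximal elements are exactly those with $\Gamma$ trivalent and weightless and $|\E|=g$ (via the surjectivity in Proposition \ref{prop:jclosed} and Proposition \ref{prop:chain}), and codimension-one connectedness by moving between trivalent graphs through single non-loop contractions (the cited connectivity of $\grap_{g,1}$, which is the paper's appeal to \cite{BMV} and \cite{Caporaso}), bridging with maximal pseudo-divisors on the codimension-one graphs lifted by $\iota_*$ and connecting within fibers by Proposition \ref{prop:cod1}. Your explicit classification of covering relations and the resulting rank formula $\dim(\Gamma,\E,D)=|E(\Gamma)|+|\E|$ is a welcome elaboration of a step the paper leaves implicit, but it does not change the underlying argument.
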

\begin{proof}
We prove that $\mathcal{QD}_{\mu,g}$ has pure dimension $4g-2$, which is equivalent to the fact that every maximal element of $\mathcal{QD}_{\mu,g}$ is of the form $(\Gamma,\E,D)$ where $\Gamma$ is a $3$-regular graph with $1$ leg and with zero weight function, and where $|\E|=g$. (Recall that a $3$-regular graph with $1$ leg is a graph such that every vertex $v$ satisfies $\val(v)+\l(v)=3$.)  If $\Gamma$ is not a $3$-regular graph with $1$ leg and with zero weight function, then there exists a specialization $\Gamma'\ra\Gamma$ with $\Gamma'$ a $3$-regular graph with $1$ leg and with zero weight function because $M_{g,1}^\trop$ is pure of dimension $3g-2$. By Proposition \ref{prop:jclosed}, there is a specialization $(\Gamma',\E',D')\ra (\Gamma,\E,D)$ of $(v_0,\mu)$-quasistable pseudo-divisors. If $|\E|<g$, by Proposition \ref{prop:chain} there is a specialization $(\Gamma, \E',D')\ra (\Gamma,\E,D)$ with $|\E'|=g$.\par
 We now prove that $\mathcal{QD}_{\mu,g}$ is connected in codimension $1$. Let $(\Gamma,\E,D)$ and $(\widehat{\Gamma},\widehat{\E},\widehat{D})$ be two maximal elements of $\mathcal{QD}_{\mu,g}$. By  \cite[Theorem 3.2.5]{BMV} and \cite[Fact 4.12]{Caporaso}, there are two sequences: the first is a sequence $\Gamma=\Gamma_0,\Gamma_1,\ldots ,\Gamma_n=\widehat{\Gamma}$ of $3$-regular graphs with $1$ leg and the second is a sequence $\Gamma'_1 , \Gamma'_2 , \ldots , \Gamma'_n$ of codimension-$1$ graphs with $1$ leg; These sequences are endowed with specializations $\iota_k\col \Gamma_k\ra\Gamma'_{k+1}$ for $k=0,\dots,n-1$, and $\iota'_k\col\Gamma_k\ra\Gamma'_k$ for $k=1,\dots,n$, each one of which is the contraction of precisely one edge which is not a loop. This implies that the first Betti number of $\Gamma'_k$ is $b_1(\Gamma'_k)=g$ for every $k=1,\dots,n$; choose a $(v_0,\mu_{\Gamma'_k})$-quasistable pseudo-divisor $(\E'_k,D_k)$ on $\Gamma'_k$ with $|\E'_k|=g$ (this can be done by Corollary \ref{cor:unique}). By Proposition \ref{prop:jclosed} there exist pseudo-divisors $(\E_k,D_k)$ and $(\widehat{\E}_k,\widehat{D}_k)$ on $\Gamma_k$ for every $k=0,\ldots,n$ such that 
\[
\iota_{k,*}(\widehat{\E}_k,\widehat{D}_k)=(\E_{k+1}',D_{k+1}')\text{ for }k=0,\ldots,n-1,\text{ and }(\widehat{\E}_n,\widehat{D}_n)=(\widehat{\E},\widehat{D}),
\]
and such that
\[
(\E_0,D_0)=(\E,D),\text{ and } \iota'_{k,*}(\E_k,D_k)=(\E_k',D_k')\text{ for }k=1,\ldots,n.
\]
So there is a path in codimension $1$ from $(\Gamma_k,\widehat{\E}_k,\widehat{D}_k)$ to $(\Gamma_{k+1},\E_{k+1},\widehat{D}_{k+1})$ for every $k=0,\dots,n-1$. However, Proposition \ref{prop:cod1} shows that $(\Gamma_k,\E_k,D_k)$ and $(\Gamma_k,\widehat{\E}_k,\widehat{D}_k)$ are connected in codimension $1$ for every $k=0,\dots,n$. This finishes the proof that $\mathcal{QD}_{\mu,g}$ is connected in codimension $1$.\par
  The forgetful map $f$ is clearly order preserving. Let us prove that $f^{-1}([\Gamma])=\mathcal{QD}_{v_0,\mu}(\Gamma)/\Aut(\Gamma)$. There is a natural order-preserving map $h\col \mathcal{QD}_{v_0,\mu_\Gamma}(\Gamma)\to \mathcal{QD}_{\mu,g}$, and we have that $f^{-1}([\Gamma])=\Im(h)$. Moreover, $h(\E,D)=h(\E',D')$ if and only if there exists an automorphism $\iota\col\Gamma\ra\Gamma$ such that $\iota_*(\E,D)=(\E',D')$. This means that $\Im(h)=\mathcal{QD}_{v_0,\mu_\Gamma}(\Gamma)/\Aut(\Gamma)$.
\end{proof}

\section{The universal tropical Jacobian}\label{sec:univtropJ}

In this section we will extend the results of Section \ref{sec:quasigraph} to tropical curves. The analogues of the posets appearing in Section \ref{sec:quasigraph} will be polyhedral complexes. Moreover, we will introduce the Jacobian of a tropical curve by means of quasistable divisors, and prove that it is homeomorphic to the usual tropical Jacobian. 

Let $X$ be a tropical curve. A degree-$d$ polarization on $X$ is a function $\mu\col X\to\R$ such that $\mu(p)=0$ for all, but finitely many $p\in X$, and $\sum_{p\in X}\mu(p)=d$. We define the \emph{support} of $\mu$ as 
\[
\supp(\mu):=\{p\in X;\mu(p)\neq 0\}.
\]
   Let $\mu$ be a degree-$d$ polarization on $X$. For every tropical subcurve $Y\subset X$, we define $\mu(Y):=\sum_{p\in Y}\mu(p)$. 
For any divisor $\D$ on $X$ and every tropical subcurve $Y\subset X$, we set 
\[
\beta_\D(Y):=\deg(\D|_Y)-\mu(Y)+\frac{\delta_Y}{2}.
\]

We define the set $\Rel$ of \emph{relevant points of $X$} (with respect to $\mu$) and the set $\Rel_\D$ of \emph{$\D$-relevant points} as
\[
\Rel:=V(X)\cup\supp(\mu) 
\quad \text{ and } \quad
\Rel_\D:=\Rel\cup\supp(\D).
\]
Note that if $p$ is not $\D$-relevant, then $\beta_\D(p)=1$. Given a tropical subcurve $Y\subset X$, we also define 
\[
\rel_\D(Y)=|\Rel_\D\setminus Y|.
\]
We define the graphs $\Gamma_X$, $\Gamma_{X,\D}$, $\Gamma_{Y,\D}$, as the models of $X$ whose sets of vertices are 
\[
V(\Gamma_X)=\Rel, \quad\quad V(\Gamma_{X,\D})=\Rel_\D, \quad\quad V(\Gamma_{Y,\D})=V(Y)\cup \Rel_\D.
\]

Note that a degree-$d$ polarization $\mu$ on $X$ induces a degree-$d$ polarization on $\Gamma_X$, $\Gamma_{X,\D}$ and $\Gamma_{Y,\D}$ which, abusing notation, we will denote by $\mu$.

\begin{Lem}
\label{lem:beta}
Let $X$ be a tropical curve and $Y,Z$ be tropical subcurves of $X$. Then
\[
\beta_\D(Y\cap Z)+\beta_\D(Y\cup Z)=\beta_\D(Y)+\beta_\D(Z).
\]
In particular, if $Y\cap Z$ consists  of a finite number of non $\D$-relevant points, then 
\[
\beta_\D(Y\cup Z)=\beta_\D(Y)+\beta_\D(Z)-|Y\cap Z|.
\]
\end{Lem}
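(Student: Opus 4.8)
The plan is to separate the additive contributions to $\beta_\D$ from the boundary contribution. The functions $Y\mapsto \deg(\D|_Y)$ and $Y\mapsto\mu(Y)$ are sums of pointwise data over the points of $Y$, so they satisfy inclusion--exclusion automatically:
\[
\deg(\D|_{Y\cap Z})+\deg(\D|_{Y\cup Z})=\deg(\D|_Y)+\deg(\D|_Z),
\]
and likewise with $\mu$ in place of $\deg(\D|_{-})$. Hence the first identity is equivalent to the purely combinatorial statement $\delta_{Y\cap Z}+\delta_{Y\cup Z}=\delta_Y+\delta_Z$, and this is the only thing that needs work.

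To prove the $\delta$-identity I would fix a single model $\Gamma$ of $X$ that simultaneously resolves $Y$ and $Z$: starting from the models $\Gamma_{X,Y}$ and $\Gamma_{X,Z}$ and passing to a common refinement, we obtain a model in which the interior of every edge lies either entirely inside or entirely outside each of $Y$ and $Z$. Since $\delta$ is independent of the chosen model (as recalled right after its definition), $\delta_W$ may be computed in $\Gamma$ for each of $W=Y,Z,Y\cap Z,Y\cup Z$. Writing $\Gamma_W$ for the subgraph of $\Gamma$ inducing $W$, the definition reads $\delta_W=\sum_{v\in V(\Gamma_W)}\val_{E(\Gamma)\setminus E(\Gamma_W)}(v)$, and the handshake identity $\sum_{v\in V(\Gamma_W)}\val_{E(\Gamma_W)}(v)=2|E(\Gamma_W)|$ rewrites this as
\[
\delta_W=\sum_{v\in V(\Gamma_W)}\bigl(\val(v)-\val_{E(\Gamma_W)}(v)\bigr)=\sum_{v\in V(\Gamma_W)}\val(v)-2|E(\Gamma_W)|.
\]
The decisive point is that, because $\Gamma$ resolves both subcurves, one has $V(\Gamma_{Y\cup Z})=V(\Gamma_Y)\cup V(\Gamma_Z)$ and $E(\Gamma_{Y\cup Z})=E(\Gamma_Y)\cup E(\Gamma_Z)$, and the same with $\cap$ throughout; an edge whose interior lies in $Y\cup Z$ but in neither $Y$ nor $Z$ simply cannot occur in such a model. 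Applying ordinary inclusion--exclusion to the vertex sets and to the edge sets separately then yields $\delta_{Y\cup Z}+\delta_{Y\cap Z}=\delta_Y+\delta_Z$ with no correction term. This is exactly what distinguishes the present statement from the graph-theoretic Lemma \ref{lem:cap}: there the cut $\delta_V$ forces the term $-|E(V,W)|$ arising from edges between $V\setminus W$ and $W\setminus V$, whereas here the fine model makes $E(\Gamma_{Y\cup Z})=E(\Gamma_Y)\cup E(\Gamma_Z)$ hold on the nose.

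For the ``in particular'' clause I would use that a non-$\D$-relevant point $p$ satisfies $\delta_{X,p}=2$ (since $p\notin V(X)$ forces this) and $\D(p)=\mu(p)=0$, so that $\beta_\D(p)=1$, as already observed after the definition of $\Rel_\D$. When $Y\cap Z$ is a finite set of such points it carries no edges, so $\beta_\D$ is additive over them and $\beta_\D(Y\cap Z)=\sum_{p\in Y\cap Z}\beta_\D(p)=|Y\cap Z|$; substituting this into the first identity gives $\beta_\D(Y\cup Z)=\beta_\D(Y)+\beta_\D(Z)-|Y\cap Z|$.

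I expect the only genuine obstacle to be the careful construction of the common model $\Gamma$ and the verification that the intersection and union of the subcurves are computed as the intersection and union of the corresponding subgraphs, including the bookkeeping for isolated points of $Y\cap Z$ and for loops; once $\Gamma$ is in place, the argument is pure inclusion--exclusion.
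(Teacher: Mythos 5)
Your proof is correct, and it arrives at both identities, but it handles the boundary term by a different mechanism than the paper. The paper's proof of the first identity is a one-line localization: $\beta_\D$ is viewed entirely as a sum of local contributions, each point $p$ contributing $\D(p)-\mu(p)$ plus half the number of tangent directions at $p$ leaving the subcurve, and one checks direction by direction that these local contributions satisfy inclusion--exclusion (using, at points of $Y\setminus Z$, that a closed subcurve not containing $p$ contains no tangent direction at $p$). You instead localize only $\deg(\D|_{-})$ and $\mu(-)$ and treat $\delta$ globally: pass to a common model resolving both $Y$ and $Z$, write $\delta_W=\sum_{v\in V(\Gamma_W)}\val(v)-2|E(\Gamma_W)|$, and apply inclusion--exclusion to vertex sets and edge sets separately, the decisive (and correctly justified) point being that in such a model $E(\Gamma_{Y\cup Z})=E(\Gamma_Y)\cup E(\Gamma_Z)$ holds on the nose. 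Both routes are sound; the paper's is shorter, while yours makes explicit the model-theoretic bookkeeping the paper leaves implicit, and your contrast with Lemma \ref{lem:cap} --- where the cut-based definition of $\delta_V$ forces the correction term $-|E(V,W)|$ that is absent here --- is a genuine clarification. The ``in particular'' clause is handled identically in both proofs, via $\beta_\D(p)=1$ for non-$\D$-relevant points and additivity of $\beta_\D$ over finite sets of points.
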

\begin{proof}
The proof of the first equation follows  simply observing that a point $p\in X$ contributes the same in each side of the equality. 
On the other hand, if $Y\cap Z$ consists of non $\D$-relevant points, then
\[
\beta_\D(Y\cap Z)=\sum_{p\in Y\cap Z}\beta_\D(p)=|Y\cap Z|,
\]
and hence the second equality holds.
\end{proof}

Given a tropical subcurve $Y\subset X$ and $\epsilon\in \mathbb{R}_{>0}$, we define the tropical subcurve
\[
Y^{\epsilon}:=\overline{\bigcup_{p\in Y}B_\epsilon(p)},
\]
where $B_\epsilon(p)$ is the ball in $X$ with radius $\epsilon$ and center $p$. Note that,  for a sufficiently small $\epsilon\in \mathbb R_{>0}$, we have $\beta_\D(Y^\epsilon)=\beta_\D(Y)$. \par

\begin{Def}\label{def:quasistable}
Let $X$ be a tropical curve. Let $\mu$ be a degree-$d$ polarization on $X$ and $\D$ be a divisor on $X$. We say that $\D$ is \emph{$\mu$-semistable}  if for every tropical subcurve $Y\subset X$ we have $\beta_\D(Y)\geq0$. Given a point $p_0$ of $X$, we say that $\D$ is \emph{$(p_0,\mu)$-quasistable} if it is $\mu$-semistable and $\beta_\D(Y)>0$ for every proper subcurve $Y\subset X$ with $p_0\in Y$. 
\end{Def}

Note that, equivalently, $\D$ is $(p_0,\mu)$-quasistable if and only if for every tropical subcurve $Y\subset X$ we have $\beta_\D(Y)\leq\delta_{X,Y}$, with strict inequality if $p_0\notin Y$. Indeed, we can assume that $Y$ has no $\D$-relevant points in its border (just change $Y$ with some $Y^{\epsilon}$ for a sufficiently small $\epsilon$), interchange $Y$ with $\overline{X\setminus Y}$, and use Lemma \ref{lem:beta}.

The quasistability for a tropical curve and for one of its model are closely related, as it is illustrated by the next proposition and the subsequent corollary.

\begin{Prop}
\label{prop:quasiquasi}
Let $(X,p_0)$ be a pointed tropical curve and $\mu$ be a degree-$d$ polarization on $X$. A degree-$d$ divisor $\D$ on $X$ is $(p_0,\mu)$-quasistable if and only if $D$ is $(p_0,\mu)$-quasistable on $\Gamma_{X,\D}$, where $D$ is the divisor $\D$ seen as divisor on $\Gamma_{X,\D}$.
\end{Prop}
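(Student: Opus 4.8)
The plan is to translate both notions of quasistability into the common language of the function $\beta$ and then match tropical subcurves of $X$ with subsets of $V(\Gamma_{X,\D})$. First note that, since $p_0$ is a marked point, $p_0\in V(X)\subseteq \Rel_\D=V(\Gamma_{X,\D})$, so $p_0$ is a vertex and the graph-theoretic notion of $(p_0,\mu)$-quasistability makes sense. The basic bookkeeping is that $\supp(\D)$ and $\supp(\mu)$ are contained in $\Rel_\D=V(\Gamma_{X,\D})$; hence, for any tropical subcurve $Y\subset X$, writing $V:=\Rel_\D\cap Y$, one has $\deg(\D|_Y)=\deg(D|_V)$ and $\mu(Y)=\mu(V)$, so that
\[
\beta_\D(Y)-\beta_D(V)=\tfrac{1}{2}\bigl(\delta_{X,Y}-\delta_V\bigr).
\]
Everything therefore reduces to comparing $\delta_{X,Y}$ with $\delta_V=|E(V,V^c)|$.

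For the implication ``$\D$ quasistable on $X$ $\Rightarrow$ $D$ quasistable on $\Gamma_{X,\D}$'', I would run this correspondence backwards: given $V\subsetneq V(\Gamma_{X,\D})$, let $Y_V$ be the tropical subcurve consisting of the vertices in $V$ together with all edges of $\Gamma_{X,\D}$ having both endpoints in $V$. A direct count gives $\delta_{X,Y_V}=|E(V,V^c)|=\delta_V$, hence $\beta_\D(Y_V)=\beta_D(V)$; since $Y_V\subsetneq X$ and $p_0\in V\iff p_0\in Y_V$, the quasistability inequalities for $\D$ applied to $Y_V$ are exactly those required for $D$ on $V$.

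The reverse implication is the substantial one, and the main obstacle is the inequality $\delta_{X,Y}\geq\delta_V$ for an \emph{arbitrary} tropical subcurve $Y$, whose boundary may lie in the interior of edges of $\Gamma_{X,\D}$. I would prove it edge by edge on $\Gamma_{X,\D}$: an edge $e$ joining $V$ to $V^c$ contributes $1$ to $\delta_V$ and at least one edge-end to $\delta_{X,Y}$, while every further boundary point of $Y$ in the interior of an edge occurs together with a matching one, contributing $+2$ to $\delta_{X,Y}$ and $0$ to $\delta_V$. Thus $\delta_{X,Y}-\delta_V$ is a non-negative even integer and $\beta_\D(Y)\geq\beta_D(V)$.

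Finally I would assemble the conclusion, paying attention to one boundary case. If $V\subsetneq V(\Gamma_{X,\D})$, then $\mu$-semistability of $D$ gives $\beta_D(V)\geq 0$, with $\beta_D(V)>0$ when $p_0\in V$; combined with $\beta_\D(Y)\geq\beta_D(V)$ this yields precisely the semistability and strict inequalities for $\D$. The one situation graph-quasistability does not detect directly is $V=V(\Gamma_{X,\D})$ with $Y\subsetneq X$, i.e.\ $Y$ contains every relevant point but omits part of some edge interior; here $\beta_D(V)=0$, but omitting an edge-interior segment between two vertices of $Y$ forces $\delta_{X,Y}\geq 2$, so $\beta_\D(Y)=\tfrac12\delta_{X,Y}\geq 1>0$, which covers both semistability and the strict condition (with $p_0\in Y$ irrelevant). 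This settles the equivalence.
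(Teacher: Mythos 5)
Your proposal is correct. The forward implication is exactly the paper's: the same subcurve $Y_V$ (vertices of $V$ together with all edges having both endpoints in $V$) and the same identity $\beta_D(V)=\beta_\D(Y_V)$. For the reverse implication, however, you take a genuinely more direct route. The paper transforms an arbitrary subcurve $Y$ into $Y_V$ by a sequence of local modifications --- first deleting $Y\cap e^\circ$ for edges $e\notin E(V)$, then adjoining whole edges $e\in E(V)$ --- and at each step uses the additivity of $\beta_\D$ (Lemma \ref{lem:beta}) to show that $\beta_\D$ does not increase, arriving at $\beta_\D(Y)\geq\beta_\D(Y_V)=\beta_D(V)$. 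You instead prove the single global inequality $\delta_{X,Y}\geq\delta_V$ by counting edge-ends (each edge of $E(V,V^c)$ contributes at least one outgoing end at the component of $Y\cap e$ containing its $V$-endpoint, and all remaining boundary contributions are nonnegative), which, since $\D$ and $\mu$ are supported on $\Rel_\D$, yields $\beta_\D(Y)\geq\beta_D(V)$ in one stroke; your parity refinement (``a non-negative even integer'') is true but not needed. The most valuable difference is your explicit treatment of the boundary case $V=V(\Gamma_{X,\D})$ with $Y\subsetneq X$ and $p_0\in Y$: there graph quasistability gives only $\beta_D(V)=0$, and one needs the extra observation that omitting part of an edge interior forces $\delta_{X,Y}\geq 2$, hence $\beta_\D(Y)\geq 1>0$. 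The paper's written proof stops at $\beta_\D(Y)\geq\beta_\D(Y_V)$ and leaves this case implicit; it is recoverable there because the paper's inequality $\beta_\D(Y\cap e)\geq\beta_\D(e)$ is strict whenever $Y\cap e\subsetneq e$, but this is never spelled out, so on this point your argument is the more complete of the two.
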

\begin{proof}
Given a subset $V\subset V(\Gamma_{X,\D})$ there is an induced tropical subcurve $Y_V$ of $X$ defined as $Y_V:=V\cup\bigcup_{e\in E(V)} e$,  where $E(V)$ are all the edges in $E(\Gamma)$ connecting two (possibly coincident) vertices in $V$. Then $\beta_D(V)=\beta_\D(Y_V)$, and this proves the  ``only if'' implication.\par
Conversely, let $Y$ be a subcurve of $X$ and define $V=Y\cap V(\Gamma_{{X,\D}})$. Let us prove that $\beta_\D(Y)\geq\beta_\D(Y_V)$. 

First we show that, given an edge $e\in E(\Gamma_{X,\D})\setminus E(V)$, we have $\beta_\D(Y)\geq\beta_\D(Y_e)$, where $Y_e:=Y\setminus e^\circ$ and $e^\circ$ is the interior of $e$.
By Lemma \ref{lem:beta}, we have
\[
\beta_\D(Y)+\beta_\D(Y_e\cap e)=\beta_\D(Y_e)+\beta_\D(Y\cap e),
\]
then it is sufficient to prove that $\beta_\D(Y\cap e)\geq\beta_\D(Y_e\cap e)$.

We claim that $\delta_{Y\cap e}\geq \delta_{Y_e\cap e}$. 
Indeed, if $Y_e\cap e=\emptyset$ the result is trivial. So, we can assume that $Y_e\cap e=v$, where $v$ is a vertex of $\Gamma_{X,\D}$. Since the other vertex $v'$ incident to $e$ satisfies $v'\notin Y$, we have that $\delta_{Y\cap e}\geq\val(v)$, which proves the claim.\par

 So we get
\begin{align*}
\beta_\D(Y\cap e)=&\deg(\D|_{Y\cap e})-\mu(Y\cap e)+\frac{\delta_{Y\cap e}}{2}\\
                =&\deg(\D|_{Y_e\cap e})-\mu(Y_e\cap e)+\frac{\delta_{Y\cap e}}{2}\\
								\geq&\deg(\D|_{Y_e\cap e})-\mu(Y_e\cap e)+\frac{\delta_{Y_e\cap e}}{2}\\
                =&\beta_\D(Y_e\cap e).
\end{align*}

The fact that $\beta_\D(Y)\geq\beta_\D(Y_e)$ for every $e\in E(\Gamma_{X,\D})\setminus E(V)$ means that we can assume that $Y$ does not contain any point in the interior of an edge outside $E(V)$.\par
Now to prove that $\beta_\D(Y)\geq\beta_\D(Y_V)$, it suffices to show that, given an edge $e\in E(V)$, we have $\beta_\D(Y)\geq\beta_\D(Y^e)$, where $Y^e:=Y\cup e$. Again, by Lemma \ref{lem:beta}, we have
\[
\beta_\D(Y^e)+\beta_\D(Y\cap e)=\beta_\D(Y)+\beta_\D(e).
\]
So it is enough that $\beta_\D(Y\cap e)\geq\beta_\D(e)$. Note that both $\D$ and $\mu$ are supported on $V(\Gamma_{X,\D})$, and $Y\cap e$ contains both vertices incident to $e$.  Hence 
\begin{align*}
\beta_\D(Y\cap e)=&\deg(\D|_{Y\cap e})-\mu(Y\cap e)+\frac{\delta_{Y\cap e}}{2}\\
                =&\deg(\D|_{e})-\mu(e)+\frac{\delta_{Y\cap e}}{2}\\
								\geq&\deg(\D|_{e})-\mu(e)+\frac{\delta_{e}}{2}\\
                =&\beta_\D(e).
\end{align*}
This concludes the proof.
\end{proof}
\begin{CorDef}
\label{cor:quasiquasi}
Preserve the notations of Proposition \ref{prop:quasiquasi} and let $\D$ be a $(p_0,\mu)$-quasistable degree-$d$ divisor on $X$. Then $\Gamma_{X,\D}$ is an $\E$-subdivision of $\Gamma_X$ for some $\E\subset E(\Gamma_X)$, and the pair $(\E,D)$ is a $(p_0,\mu)$-quasistable degree-$d$ pseudo-divisor on $\Gamma_X$, where $D$ is the divisor $\D$ seen as a divisor on $\Gamma_X^\E$. We call $(\E,D)$ the pseudo-divisor on $\Gamma_X$ induced by $\D$.
\end{CorDef}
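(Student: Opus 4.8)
The plan is to show that the only way $\Gamma_{X,\D}$ can differ from $\Gamma_X$ is by subdividing some edges, each at most once, with $\D$ taking the value $-1$ at each new point; the quasistability of $(\E,D)$ will then follow immediately from Proposition \ref{prop:quasiquasi}. Since $V(\Gamma_X)=\Rel\subset\Rel_\D=V(\Gamma_{X,\D})$, the graph $\Gamma_{X,\D}$ is obtained from $\Gamma_X$ by promoting the points of $\supp(\D)\setminus\Rel$ to vertices. Each such point $p$ satisfies $p\notin V(X)$, hence $\delta_{X,p}=2$, $\l(p)=0$ and $w(p)=0$, so $p$ lies in the interior of a unique edge of $\Gamma_X$. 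I would let $\E\subset E(\Gamma_X)$ be the set of edges whose interior meets $\supp(\D)\setminus\Rel$, and the two things to verify are that each edge in $\E$ carries exactly one such point and that $\D=-1$ there.

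First I would pin down $\D(p)$ for $p\in\supp(\D)\setminus\Rel$. Because $p\notin\supp(\mu)$ and $\delta_{X,p}=2$, one computes $\beta_\D(\{p\})=\D(p)+1$. Semistability gives $\beta_\D(\{p\})\ge0$, so $\D(p)\ge-1$. Since $p_0$ is a marked point we have $\l(p_0)\ge1$, hence $p_0\in V(X)\subset\Rel$ and in particular $p\ne p_0$; applying the quasistability inequality in the form $\beta_\D(Y)<\delta_{X,Y}$ for $p_0\notin Y$ (stated right after Definition \ref{def:quasistable}) to $Y=\{p\}$ yields $\D(p)+1<2$, i.e. $\D(p)\le0$. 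As $\D(p)\ne0$, this forces $\D(p)=-1$.

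The main obstacle is ruling out two distinct points of $\supp(\D)\setminus\Rel$ in the interior of a single edge $e$. Suppose there were; choosing two such points $p,q$ with no point of $\supp(\D)$ strictly between them, the closed segment $Y=\overline{pq}$ contains no relevant point and no point of $\supp(\mu)$ in its interior, so by the previous step $\deg(\D|_Y)=\D(p)+\D(q)=-2$ and $\mu(Y)=0$, while $\delta_{X,Y}=2$. Hence $\beta_\D(Y)=-1<0$, contradicting semistability. This shows $\Gamma_{X,\D}=\Gamma_X^\E$, that every exceptional vertex carries $\D$-value $-1$, and therefore that $(\E,D)$ is a genuine pseudo-divisor on $\Gamma_X$. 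Finally, to obtain quasistability I would note that the polarization induced by $\mu$ on $\Gamma_{X,\D}=\Gamma_X^\E$ agrees with $\mu^\E$ (both vanish on exceptional vertices and equal $\mu$ elsewhere); thus $D$ is $(p_0,\mu^\E)$-quasistable on $\Gamma_X^\E$ if and only if $\D$ is $(p_0,\mu)$-quasistable on $X$ by Proposition \ref{prop:quasiquasi}, and the latter holds by hypothesis. By definition this is exactly the assertion that $(\E,D)$ is a $(p_0,\mu)$-quasistable pseudo-divisor on $\Gamma_X$.
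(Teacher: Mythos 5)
Your proof is correct and follows essentially the same route as the paper: the paper's own proof simply cites Remark \ref{rem:subdivision} for the structural facts you verify by hand (that $\D$ equals $-1$ at each support point outside $\Rel$, and that no edge of $\Gamma_X$ can carry two such points) and then, exactly as you do, invokes Proposition \ref{prop:quasiquasi} to transfer quasistability to the pseudo-divisor $(\E,D)$. The only difference is that you carry out on the tropical curve the single-point and segment inequalities which the paper's remark asserts for graph refinements with the phrase ``it is easy to see,'' so your argument is a more self-contained version of the same idea rather than a different one.
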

\begin{proof}
The fact that $\Gamma_{X,\D}$ is an $\E$-subdivision for some $\E\subset E(\Gamma_X)$ comes from Remark \ref{rem:subdivision}. The remaining statements are clear from Proposition \ref{prop:quasiquasi}.
\end{proof}

Let $\iota\col X\to Y$ be a specialization  of tropical curves. Let $\mu$ be a degree-$d$ polarization on $X$ and $\D$ be a divisor on $X$ of degree $d$. We define a degree-$d$ polarization $\iota_*(\mu)$ on $Y$ and a degree-$d$ divisor $\iota_*(\D)$ on $Y$ as
\[
\iota_*(\mu)(p')=\sum_{p\in \iota^{-1}(p')}\mu(p)\quad\text{ and }\quad\iota_*(\D)(p')=\sum_{p\in \iota^{-1}(p')}\D(p).
\]

\begin{Lem}\label{lem:i*}
Let $\iota\col X\to Y$ be a specialization of tropical curves. Let $\mu$ be a degree-$d$ polarization of $X$ and $\D$ a $(p_0,\mu)$-quasistable divisor on $X$ of degree $d$, for some $p_0\in X$. Then $\iota_*(\D)$ is a $(\iota(p_0),\iota_*(\mu))$-quasistable  divisor on $Y$ of degree $d$.
\end{Lem}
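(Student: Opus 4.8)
The plan is to verify the quasistability of $\iota_*(\D)$ directly, by pulling every tropical subcurve of $Y$ back to $X$ along $\iota$ and comparing the corresponding values of $\beta$. By Definition \ref{def:quasistable} it suffices to prove that $\beta_{\iota_*(\D)}(W)\geq 0$ for every tropical subcurve $W\subset Y$, with strict inequality whenever $W$ is a proper subcurve containing $\iota(p_0)$. Given such a $W$, I would set $Z:=\iota^{-1}(W)$, which is a tropical subcurve of $X$, and reduce everything to the single identity
\[
\beta_{\iota_*(\D)}(W)=\beta_\D(Z).
\]

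To establish this identity I would check the three ingredients of $\beta$ separately. The equalities $\deg(\iota_*(\D)|_W)=\deg(\D|_Z)$ and $\iota_*(\mu)(W)=\mu(Z)$ are immediate from the definitions of $\iota_*(\D)$ and $\iota_*(\mu)$: both amount to rewriting the double sum $\sum_{p'\in W}\sum_{p\in\iota^{-1}(p')}$ as the single sum $\sum_{p\in Z}$. The genuinely geometric point, which I expect to be the main obstacle, is the equality of boundary indices $\delta_{Y,W}=\delta_{X,Z}$. Here the crucial observation is that $Z$ is the \emph{full} preimage of $W$: every edge contracted by $\iota$ maps to a point of $Y$ lying either in $W$ or in its complement, so such an edge lies entirely inside $Z$ or entirely inside $X\setminus Z$ and never crosses the boundary of $Z$. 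Choosing compatible refinements $\Gamma_{X,Z}\to\Gamma_{Y,W}$ in which $Z$ and $W$ are induced by subgraphs $\Gamma_Z$ and $\Gamma_W$, this observation shows that $\iota$ restricts to a bijection between the cut $E(V(\Gamma_Z),V(\Gamma_Z)^c)$ and the cut $E(V(\Gamma_W),V(\Gamma_W)^c)$, so the valence counts defining $\delta_{X,Z}$ and $\delta_{Y,W}$ coincide.

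With the identity in hand the conclusion is formal. Since $\iota$ is surjective, a proper subcurve $W\subsetneq Y$ has proper preimage $Z\subsetneq X$, and $\iota(p_0)\in W$ holds if and only if $p_0\in Z$. Applying the $\mu$-semistability of $\D$ gives $\beta_\D(Z)\geq 0$, hence $\beta_{\iota_*(\D)}(W)\geq 0$; and when $\iota(p_0)\in W$, applying the $(p_0,\mu)$-quasistability of $\D$ to the proper subcurve $Z$ containing $p_0$ gives $\beta_\D(Z)>0$, hence $\beta_{\iota_*(\D)}(W)>0$. Finally $\deg(\iota_*(\D))=\deg(\D)=d$ by the same re-indexing, so $\iota_*(\D)$ is a $(\iota(p_0),\iota_*(\mu))$-quasistable divisor of degree $d$.

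Alternatively, one could avoid handling the metric boundary directly by passing to a model: choosing a model $\Gamma_X$ of $X$ fine enough that $\iota$ is induced by a graph specialization and that $\D$ defines a pseudo-divisor on $\Gamma_X$ (Corollary-Definition \ref{cor:quasiquasi}), one applies Proposition \ref{prop:spec}(i) to transfer quasistability across the specialization and then Proposition \ref{prop:quasiquasi} to return to the tropical curve $Y$. The direct argument above is essentially the translation of this reduction into the language of tropical subcurves, with the equality $\delta_{Y,W}=\delta_{X,Z}$ playing the role of the identities used in the proof of Proposition \ref{prop:spec}.
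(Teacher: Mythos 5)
Your proposal is correct and follows essentially the same route as the paper: the paper's proof also pulls back each proper subcurve $Z\subset Y$ to $\iota^{-1}(Z)\subset X$ and establishes $\beta_{\iota_*(\D)}(Z)=\beta_\D(\iota^{-1}(Z))$ term by term, then invokes quasistability of $\D$ together with the equivalence $p_0\in\iota^{-1}(Z)\Leftrightarrow\iota(p_0)\in Z$. Your explicit justification of $\delta_{Y,W}=\delta_{X,Z}$ (contracted edges never cross the boundary of a full preimage) is a detail the paper leaves implicit, but the argument is the same.
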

\begin{proof}
For every proper tropical subcurve $Z$ of $Y$, we have 
\begin{align*}
\beta_{\iota_*(\D)}(Z)=&\deg(\iota_*(\D)|_Z)-\iota_*(\mu)(Z)+\frac{\delta_{Z}}{2}\\
                     =&\deg(\D|_{\iota^{-1}(Z)})-\mu(\iota^{-1}(Z))+\frac{\delta_{\iota^{-1}(Z)}}{2}\\
										 =&\beta_\D(\iota^{-1}(Z)),
\end{align*}
and the last term is always nonnegative, and it is positive if $p_0\in \iota^{-1}(Z)$ or, equivalently, if $\iota(p_0)\in Z$. This proves that $\iota_*(\D)$ is $(\iota(p_0),\iota_*(\mu))$-quasistable.
\end{proof}

Next we have a key result stating that quasistable divisors can be chosen as canonical representatives for equivalence classes of divisors on a tropical curve.

\begin{Thm}
\label{thm:quasistable}
Let $(X,p_0)$ be a pointed tropical curve and $\mu$ a degree-$d$ polarization on $X$. Given a divisor $\D$ on $X$ of degree $d$, there exists a unique degree-$d$ divisor equivalent to $\D$ which is $(p_0,\mu)$-quasistable.
\end{Thm}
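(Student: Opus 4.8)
The plan is to prove existence and uniqueness separately, reducing existence to the already established graph-theoretic statement (Theorem \ref{thm:esteves}) via Proposition \ref{prop:quasiquasi}, and proving uniqueness by a direct argument on $X$ using the maximal locus of a rational function.

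For existence, I would first fix a model $\Gamma$ of $X$ whose vertex set contains all $\D$-relevant points, i.e.\ $V(\Gamma)\supseteq \Rel_\D=\Rel\cup\supp(\D)$; since $p_0$ is a marked point we have $p_0\in V(X)\subseteq \Rel\subseteq V(\Gamma)$, so we may set $v_0:=p_0$. Viewing $\D$ as a divisor $D$ on the connected graph $\Gamma$, Theorem \ref{thm:esteves} produces a $(v_0,\mu)$-quasistable divisor $D'$ on $\Gamma$ with $D-D'\in\Prin(\Gamma)$. It then remains to check that $D'$, read back as a tropical divisor $\D'$ on $X$, is both equivalent to $\D$ and $(p_0,\mu)$-quasistable. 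For equivalence I would note that $\Prin(\Gamma)\subseteq\Prin(X)$: firing at a vertex $v$ is realized tropically by moving the chip at $v$ a short distance $\epsilon$ along each incident edge, which is $\div_X$ of an explicit tent-shaped rational function, and composing such elementary moves along whole edges realizes the generator $d^*d(\mathbb{1}_v)$, so $\D'\sim\D$. For quasistability, since $\supp(\D')\subseteq V(\Gamma)$ and $\Rel\subseteq V(\Gamma)$, the graph $\Gamma$ is a subdivision of $\Gamma_{X,\D'}$; for every $V\subseteq V(\Gamma_{X,\D'})$ the value $\beta_{D'}(V)$ is the same whether computed on $\Gamma$ or on $\Gamma_{X,\D'}$, because the additional vertices of $\Gamma$ are bivalent, carry zero value and zero polarization, and do not lie in such $V$. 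Hence $D'$ is $(v_0,\mu)$-quasistable on $\Gamma_{X,\D'}$, and Proposition \ref{prop:quasiquasi} upgrades this to $(p_0,\mu)$-quasistability of $\D'$ on $X$.

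For uniqueness, suppose $\D_1$ and $\D_2$ are $(p_0,\mu)$-quasistable with $\D_1-\D_2=\div_X(f)$, and assume $f$ is not constant. Let $m=\max_X f$ and, for a generic $t<m$ close to $m$, set $Y:=\{p\in X:\ f(p)\geq t\}$, a proper nonempty subcurve; choosing $t$ generic guarantees $\delta_Y=|\out(Y)|$. Every edge in $\out(Y)$ carries a positive integer slope $s_e\geq 1$ of $f$ pointing into $Y$, while internal edges contribute nothing, so
\[
\deg(\D_1|_Y)-\deg(\D_2|_Y)=\sum_{p\in Y}\ord_p(f)=\sum_{e\in\out(Y)}s_e\geq \delta_Y,
\]
whence $\beta_{\D_1}(Y)-\beta_{\D_2}(Y)\geq\delta_Y$. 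On the other hand quasistability gives $\beta_{\D_1}(Y)\leq\delta_Y$ and $\beta_{\D_2}(Y)\geq 0$, so $\beta_{\D_1}(Y)-\beta_{\D_2}(Y)\leq\delta_Y$. Equality must therefore hold throughout: $\beta_{\D_1}(Y)=\delta_Y$ and $\beta_{\D_2}(Y)=0$. But $\beta_{\D_1}(Y)=\delta_Y$ forces $p_0\in Y$ (otherwise that inequality is strict), while $\beta_{\D_2}(Y)=0$ forces $p_0\notin Y$ (otherwise that inequality is strict), a contradiction. Hence $f$ is constant and $\D_1=\D_2$.

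The routine part is existence, which essentially repackages Theorem \ref{thm:esteves} and Proposition \ref{prop:quasiquasi}; the only point needing care there is the inclusion $\Prin(\Gamma)\subseteq\Prin(X)$. The conceptual heart, and the main obstacle, is the uniqueness argument: the key is choosing the right test subcurve, namely (a generic level set just below) the maximum locus of $f$, since this is exactly what makes the outgoing flux $\sum_e s_e$ dominate $\delta_Y$ and lets the two opposite strictness conditions at $p_0$ collide. A secondary technical point I would handle carefully is the boundary behaviour of $Y$ — working with a generic level $t<m$ rather than the maximum locus itself, so that $\delta_Y=|\out(Y)|$ and the slope count is clean.
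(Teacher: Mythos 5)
Your uniqueness argument is correct, and it runs on the same mechanism as the paper's: your generic superlevel set $Y=\{p\in X:\ f(p)\geq t\}$ plays exactly the role of the sink produced by Lemma \ref{lem:valP}, and the sandwich $\delta_Y\leq\beta_{\D_1}(Y)-\beta_{\D_2}(Y)\leq\delta_Y$ forcing the two incompatible strictness conditions at $p_0$ is the paper's argument verbatim, applied to a subcurve instead of a point. Your version is in fact slightly cleaner, since it absorbs the case where $f$ is constant on some segments, which the paper handles separately by contracting those segments and invoking Lemma \ref{lem:i*}.

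The existence half, however, has a genuine gap: the inclusion $\Prin(\Gamma)\subseteq\Prin(X)$ is false, so Theorem \ref{thm:esteves} does not produce a divisor equivalent to $\D$ \emph{on $X$}. The group $\Prin(\Gamma)=\Im(d^*d)$ is purely combinatorial and blind to edge lengths, while $\Prin(X)$ is not. Concretely, let $X$ be a circle of circumference $3$ with model $\Gamma$ having two vertices $u,v$ joined by edges of lengths $1$ and $2$. Then $2(v-u)\in\Prin(\Gamma)$ (fire the vertex $v$), but any rational function $f$ on $X$ with $\div_X(f)$ supported on $\{u,v\}$ has constant integer slopes $s_1,s_2$ on the two edges satisfying $s_1\cdot 1=s_2\cdot 2$ by continuity around the circle, so $\div_X(f)=3s_2(v-u)$; hence $2(v-u)\notin\Prin(X)$. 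The step of your heuristic that breaks is ``composing elementary moves along whole edges'': once the chip travelling along the shorter edge reaches the opposite vertex, any further chip-firing set containing that vertex necessarily pushes chips out along the other incident edges too, so the length-blind graph firing cannot be completed tropically. The reduction would only work verbatim if all edges of $\Gamma$ had equal length, which a general tropical curve does not allow (lengths need not be commensurable). This is precisely why the paper proves existence directly on $X$, by repeatedly firing the minimal subcurve $Y$ achieving the minimal value of $\beta_\D$ by the length $\l_0$ of the shortest edge in $\out(Y)$ --- an operation that respects the metric --- and why the paper's own logic runs in the opposite direction, offering Theorem \ref{thm:quasistable} as an alternative proof of Theorem \ref{thm:esteves}.

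There is also a secondary, fixable, slip in the existence part: even granting the equivalence, your claim that $\beta_{D'}(V)$ is the same whether computed on $\Gamma$ or on $\Gamma_{X,\D'}$ for every $V\subseteq V(\Gamma_{X,\D'})$ fails whenever $V$ contains both endpoints of a subdivided edge, since each such edge contributes $2$ extra to $\delta_{\Gamma,V}$; the resulting inequality $\beta_{\Gamma}(V)\geq\beta_{\Gamma_{X,\D'}}(V)$ goes the wrong way for your purposes. The repair is to compare $\beta_{\Gamma_{X,\D'}}(V)$ with $\beta_\Gamma(W)$, where $W$ is $V$ enlarged by all exceptional vertices lying over edges with both endpoints in $V$; this is in the spirit of Remark \ref{rem:subdivision} and Proposition \ref{prop:quasiquasi}. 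But this repair does not touch the main problem above, which requires a different proof of existence altogether.
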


\begin{proof}
The set of points $p\in X$ such that $\{\D(p),\mu(p)\}\neq\{0\}$ is finite. Note that by Lemma \ref{lem:beta}, there exists a unique minimal subcurve $Y\subset X$ with $\beta_\D(Y)$ minimal, i.e., $\beta_\D(Z)\geq\beta_\D(Y)$ for every tropical subcurve $Z\subset X$ and the inequality is strict if $Z\subsetneq Y$. \par
 Let $\l_0$ be the minimum of the lengths of the edges in $\out(Y)$ (we use $\Gamma_{X,\D}$ as a model for $X$ to define $\out(Y)$). For each $e\in\out(Y)$ we give the orientation away from $Y$ and define the tropical subcurve $I_{e,\l_0}:=\ol{p_{e,o}p_{e,\l_0}}$ and we let $I_{e,\l_0}^\circ$ be its interior. Let $\F:=\D_{Y,\l_0}$ be the chip-firing divisor emanating from $Y$ with length $\l_0$. Define $\D':=\D-\F$, and set 
\[
\Delta_Y:=Y\cap \overline{X\setminus Y}
\quad \text{ and } \quad 
\Delta_{\F,Y}:=\supp(\F)\setminus\Delta_Y.
\] 
In other words, $\Delta_{\F,Y}$ are the points $p\in X$ where $\F(p)>0$. Note that $\mu(e^\circ)=0$ and $\supp(\D)\cap e^\circ=\emptyset$ for every $e\in\out(Y)$.\par
  In what follows, we will prove that, for every subcurve $Z$ of $X$, then $\beta_{\D'}(Z)\geq \beta_\D(Y)$, and if the equality holds, then $Z$ is strictly bigger then $Y$, in the sense that $\rel_{D'}(Z)<\rel_D(Y)$.\par

We begin proving some basic inequalities. Let $Z$ be a tropical subcurve of $X$ such that $Z\cap Y=\emptyset$ and $Z\cap I_{e,\l_0}^\circ=\emptyset$ for every $e\in \out(Y)$. Define 
\[
W:=Z\cup Y\cup\underset{p_{e,\l_0}\in Z}{\bigcup_{e\in\out(Y)}}I_{e,\l_0}.
\]
We have
\begin{align*}
\beta_\D(W)=&\deg(\D|_W)-\mu(W)+\frac{\delta_{W}}{2}\\
          =&\deg(\D|_Y)+\deg(\D|_Z)-\mu(Y)-\mu(Z)+\frac{\delta_{Y}+\delta_{Z}}{2}\\&-|\{e\in\out(Y);p_{e,\l_0}\in Z\}|\\
=&\beta_\D(Y)+\beta_\D(Z)-|\{e\in\out(Y);p_{e,\l_0}\in Z\}|\\
=&\beta_\D(Y)+\beta_\D(Z)-\deg(\F|_{Z\cap\Delta_{\F,Y}}),
\end{align*}
where the last equality comes from Equation \eqref{eq:chip}.  By the minimal property of $Y$ we have $\beta_\D(W)\geq\beta_\D(Y)$. We deduce that, for every tropical subcurve $Z$ of $X$ such that $Z\cap Y=\emptyset$ and $Z\cap I_{e,\l_0}^\circ=\emptyset$ for every $e\in\out(Y)$,
\begin{equation}
\label{eq:betaF}
\beta_\D(Z)\geq\deg(\F|_{Z\cap\Delta_{\F,Y}}).
\end{equation}

Now, let us prove that for every subcurve $Z$ of $X$ we have $\beta_{\D'}(Z)\geq\beta_D(Y)$. If $Z'$ is a connected component of $Z$ contained in $I_{e,\l_0}^\circ$ for some $e\in$, then $\beta_{\D'}(Z')=1$. By Lemma \ref{lem:beta}, we have that $\beta_{\D'}(Z)=\beta_{\D'}(Z\setminus Z')+1$. Then, we can restrict to the case where $Z$ has no connected components contained in $I_{e,\l_0}^\circ$ for every $e\in\out(Y)$.  Define $Z_1:=Z\cap Y$ and 
\[
Z_2:=\overline{Z\setminus(Y\cup\bigcup_{e\in\out(Y)}I_{e,\l_0})}.
\]
Then
\begin{align*}
\beta_\D(Z)=&\deg(\D|_Z)-\mu(Z)+\frac{\delta_{Z}}{2}\\
          =&\deg(\D|_{Z_1})+\deg(\D|_{Z_2})-\mu(Z_1)-\mu(Z_2)+\frac{\delta_{Z_1}+\delta_{Z_2}}{2}\\&-|\{e\in\out(Y);I_{e,\l_0}\subset Z\}|\\
          =&\beta_\D(Z_1)+\beta_\D(Z_2)-|\{e\in\out(Y);I_{e,\l_0}\subset Z\}|,
\end{align*}
from which we get 
\begin{align*}
\beta_{\D'}(Z)  =& \beta_\D(Z)-\deg(\F|_Z) \\
 =&\beta_\D(Z)-\deg(\F|_{Z\cap \Delta_Y})-\deg(\F|_{Z\cap\Delta_{\F,Y}})\\
=&\beta_\D(Z_1)+(\beta_\D(Z_2)-\deg(\F|_{Z_2\cap\Delta_{\F,Y}}))\\&-(|\{e\in\out(Y);I_{e,\l_0}\subset Z\}|+\deg(\F|_{Z_1\cap \Delta_Y})).
\end{align*}
However, by Equation \eqref{eq:chip}, we have 
\begin{equation}
\label{eq:Z1}
|\{e\in\out(Y);I_{e,\l_0}\subset Z\}|+\deg(\F|_{Z_1\cap \Delta_Y})\leq0
\end{equation}
and, by Equation \eqref{eq:betaF}, and the fact that $Z\cap Y=\emptyset$ and $Z\cap I_{e,\l_0}^\circ=\emptyset$ for every $e\in \out(Y)$, we have
\[
\beta_\D(Z_2)-\deg(\F|_{Z_2\cap\Delta_{\F,Y}})\geq0,
\]
so we deduce that $\beta_{\D'}(Z)\geq\beta_\D(Z_1)\geq\beta_\D(Y)$ (recall the minimal property of $Y$). Moreover, if $\beta_{\D'}(Z)=\beta_\D(Y)$, then $Z_1=Y$ and equality holds in Equation \eqref{eq:Z1}. In this case, $e\subset Z$ for every $e\in\out(Y)$  because $Z_1=Y$, from which we get $\deg(\F|_{Z_1\cap \Delta_Y})=\deg(\F|_{\Delta_Y})=|\out(Y)|$, and hence $\rel_{\D'}(Z)<\rel_\D(Y)$, because $Z$ contains the vertices incident to any edge $e_0\in \out(Y)$ of length $\l_0$.\par

Repeating the process, we eventually arrive in the case where the minimal subcurve $Y\subset X$ with $\beta_\D(Y)$ minimal is empty. In this case, $\beta_{\D}(Y)=0$ and $\D$ is equivalent to a $\mu$-semistable divisor $\D'$. To prove that every $\mu$-semistable divisor $\D$ is equivalent to a $(p_0,\mu)$-quasistable divisor, just repeat the above process for the minimal curve $Y$ containing $p_0$ with $\beta_\D(Y)=0$.

Finally we prove the uniqueness in the statement. 
Assume that  $\D_1$ and $\D_2$ are $(p_0,\mu)$-quasistable divisors on $X$ of  degree $d$ such that $\D_1\sim \D_2$, then $\D_1=\D_2$. 
Since $\D_1\sim \D_2$, there exists a rational function $f$ on $X$ such that $\D_1=\D_2+\div(f)$. By contradiction, assume that $f$ is not constant.\par
Assume that $f$ is also nowhere constant. By Lemma \ref{lem:valP}, there exists a point $p\in X$ such that $\ord_p(f)\geq\delta_{X,p}$, then $\deg(\D_1|_p)\geq\deg(\D_2|_p)+\delta_{X,p}$. Using that $\D_1$ and $\D_2$ are $(p_0,\mu)$-quasistable, we get
\[
\delta_{X,p}\geq\beta_{\D_1}(p)\geq\beta_{\D_2}(p)+\delta_{X,p}\geq\delta_{X,p}.
\]
Then $\beta_{\D_1}(p)=\delta_{X,p}$ and $\beta_{\D_2}(p)=0$. If $p\neq p_0$, the first equality is a contradiction, if $p= p_0$, the second equality is a contradiction.\par
Assume now that there are segments of $X$ over which $f$ is constant. Let $\iota\col X\ra X'$ be the specialization of tropical curves obtained by contracting all the maximal segments of $X$ over which $f$ is constant. Note that $X'$ is not a point because $f$ is not constant. Then $\iota_*(\D_1)$ and $\iota_*(\D_2)$ are $(\iota(p_0),\iota_*(\mu))$-quasistable divisors on $X'$ by Lemma \ref{lem:i*}, and $f$ induces a rational function $f'$ on $X'$ which is nowhere constant, and such that $\iota_*(\D_1)=\iota_*(\D_2)+\div(f')$. By the first part of the proof, we have a contradiction.
\end{proof}

Let $(X,p_0)$ be a pointed tropical curve with length function $\ell$, and let $\mu$ be a degree-$d$ polarization on $X$.  Let $\Gamma_X$ be the model of $X$ whose vertices are the relevant points of $X$. For each $(p_0,\mu)$-quasistable pseudo-divisor $(\E,D)$ on $\Gamma_X$ we define polyhedra
\begin{align*}
\P_{(\E,D)}:=&\prod_{e\in\E}e=\prod_{e\in\E}[0,\l(e)]\subset \R^{\E}\\
\P^\circ_{(\E,D)}:=&\prod_{e\in\E}e^\circ=\prod_{e\in\E}(0,\l(e))\subset \R^{\E},
\end{align*}
where $e^\circ$ denotes the interior of an edge $e$. 
Note that if $\E=\emptyset$, then $\P_{\E,D}$ is just a point. If $(\E,D)\ra(\E',D')$ is a specialization of pseudo-divisors, then we have an induced face morphism of polyhedra $f\col\P_{(\E',D')}\to\P_{(\E,D)}$. The polyhedron $\P^\circ_{(\E,D)}$ parametrizes $(p_0,\mu)$-quasistable divisors on $X$,  
whose induced pseudo-divisor on $\Gamma_X$ is $(\E,D)$.

\begin{Def}
\label{def:jtrop}
Let $(X, p_0)$ be a pointed tropical curve  and $\mu$ be a degree-$d$ polarization on $X$. The \emph{Jacobian of $X$ with respect to $(p_0,\mu)$} is the polyhedral complex
\[
J^\trop_{p_0,\mu}(X)=\lim_{\longrightarrow}\P_{(\E,D)},
\]
where the limit is taken over the poset $\mathcal{QD}_{p_0,\mu}(\Gamma_X)$. We have a set-theoretically decomposition
\[
J^\trop_{p_0,\mu}(X)=\coprod_{(\E,\D)} \P^\circ_{(\E,D)},
\]
where the union is taken over $(\E,\D)\in\mathcal{QD}_{p_0,\mu}(\Gamma_X)$.
\end{Def}

\begin{Exa}
Let $X$ be the tropical curve associated to the graph with $2$ vertices and $3$ edges connecting such vertices, as in Figure \ref{fig:poset}, where all edges have length $1$. Let $\mu$ be the degree-$0$ polarization on $X$ given by $\mu(p)=0$ for every $p\in X$ and assume that $p_0$ is the leftmost vertex. Then the polyhedral complex $J^\trop_{p_0,\mu}(X)$ is depicted in Figure \ref{fig:jac}.
\begin{figure}[ht]
\begin{tikzpicture}[scale=4]
\draw[ultra thick] (0,0) rectangle (1,1);
\draw[ultra thick] (0,0) -- (0,1) -- (-1,0) -- (-1,-1) -- (0,0);
\draw[ultra thick] (0,0) -- (1,0) -- (0,-1) -- (-1,-1) -- (0,0);
\draw[fill] (0,0) circle [radius=0.03];
\draw[fill] (0,1) circle [radius=0.03];
\draw[fill] (1,0) circle [radius=0.03];
\draw[fill] (1,1) circle [radius=0.03];
\draw[fill] (0,-1) circle [radius=0.03];
\draw[fill] (-1,0) circle [radius=0.03];
\draw[fill] (-1,-1) circle [radius=0.03];
\begin{scope}[shift={(0.3,0.5)},scale=0.5]
\draw (0,0) to [out=45, in=135] (1,0);
\draw (0,0) to (1,0);
\draw (0,0) to [out=-45, in=-135] (1,0);
\draw[fill] (0,0) circle [radius=0.02];
\draw[fill] (1,0) circle [radius=0.02];
\draw[fill] (0.5,0.21) circle [radius=0.02];
\draw[fill] (0.5,0) circle [radius=0.02];
\node[left] at (0,0) {1};
\node[right] at (1,0) {1};
\node[above] at (0.5,0.21) {-1};
\node[below] at (0.5,0) {-1};
\end{scope}
\begin{scope}[shift={(-0.7,0)},scale=0.5]
\draw (0,0) to [out=45, in=135] (1,0);
\draw (0,0) to (1,0);
\draw (0,0) to [out=-45, in=-135] (1,0);
\draw[fill] (0,0) circle [radius=0.02];
\draw[fill] (1,0) circle [radius=0.02];
\draw[fill] (0.5,0.21) circle [radius=0.02];
\draw[fill] (0.5,-0.21) circle [radius=0.02];
\node[left] at (0,0) {1};
\node[right] at (1,0) {1};
\node[above] at (0.5,0.21) {-1};
\node[below] at (0.5,-0.21) {-1};
\end{scope}
\begin{scope}[shift={(-0.25,-0.5)},scale=0.5]
\draw (0,0) to [out=45, in=135] (1,0);
\draw (0,0) to (1,0);
\draw (0,0) to [out=-45, in=-135] (1,0);
\draw[fill] (0,0) circle [radius=0.02];
\draw[fill] (1,0) circle [radius=0.02];
\draw[fill] (0.5,-0.21) circle [radius=0.02];
\draw[fill] (0.5,0) circle [radius=0.02];
\node[left] at (0,0) {1};
\node[right] at (1,0) {1};
\node[below] at (0.5,-0.21) {-1};
\node[above] at (0.5,0) {-1};
\end{scope}
\begin{scope}[shift={(1.15,0)},scale=0.2]
\draw (0,0) to [out=45, in=135] (1,0);
\draw (0,0) to (1,0);
\draw (0,0) to [out=-45, in=-135] (1,0);
\draw[fill] (0,0) circle [radius=0.02];
\draw[fill] (1,0) circle [radius=0.02];
\node[left] at (0,0) {0};
\node[right] at (1,0) {0};
\end{scope}
\begin{scope}[shift={(-0.1,1.1)},scale=0.2]
\draw (0,0) to [out=45, in=135] (1,0);
\draw (0,0) to (1,0);
\draw (0,0) to [out=-45, in=-135] (1,0);
\draw[fill] (0,0) circle [radius=0.02];
\draw[fill] (1,0) circle [radius=0.02];
\node[left] at (0,0) {0};
\node[right] at (1,0) {0};
\end{scope}
\begin{scope}[shift={(-1.1,-1.1)},scale=0.2]
\draw (0,0) to [out=45, in=135] (1,0);
\draw (0,0) to (1,0);
\draw (0,0) to [out=-45, in=-135] (1,0);
\draw[fill] (0,0) circle [radius=0.02];
\draw[fill] (1,0) circle [radius=0.02];
\node[left] at (0,0) {0};
\node[right] at (1,0) {0};
\end{scope}
\begin{scope}[shift={(0.9,1.1)},scale=0.2]
\draw (0,0) to [out=45, in=135] (1,0);
\draw (0,0) to (1,0);
\draw (0,0) to [out=-45, in=-135] (1,0);
\draw[fill] (0,0) circle [radius=0.02];
\draw[fill] (1,0) circle [radius=0.02];
\node[left] at (0,0) {1};
\node[right] at (1,0) {-1};
\end{scope}
\begin{scope}[shift={(-0.1,-1.1)},scale=0.2]
\draw (0,0) to [out=45, in=135] (1,0);
\draw (0,0) to (1,0);
\draw (0,0) to [out=-45, in=-135] (1,0);
\draw[fill] (0,0) circle [radius=0.02];
\draw[fill] (1,0) circle [radius=0.02];
\node[left] at (0,0) {1};
\node[right] at (1,0) {-1};
\end{scope}
\begin{scope}[shift={(-1.4,0)},scale=0.2]
\draw (0,0) to [out=45, in=135] (1,0);
\draw (0,0) to (1,0);
\draw (0,0) to [out=-45, in=-135] (1,0);
\draw[fill] (0,0) circle [radius=0.02];
\draw[fill] (1,0) circle [radius=0.02];
\node[left] at (0,0) {1};
\node[right] at (1,0) {-1};
\end{scope}
\begin{scope}[shift={(0.13,0.1)},scale=0.2]
\draw (0,0) to [out=45, in=135] (1,0);
\draw (0,0) to (1,0);
\draw (0,0) to [out=-45, in=-135] (1,0);
\draw[fill] (0,0) circle [radius=0.02];
\draw[fill] (1,0) circle [radius=0.02];
\node[left] at (0,0) {-1};
\node[right] at (1,0) {1};
\end{scope}
\begin{scope}[shift={(0.6,-0.6)},scale=0.3]
\draw (0,0) to [out=45, in=135] (1,0);
\draw (0,0) to (1,0);
\draw (0,0) to [out=-45, in=-135] (1,0);
\draw[fill] (0,0) circle [radius=0.02];
\draw[fill] (1,0) circle [radius=0.02];
\draw[fill] (0.5,-0.21) circle [radius=0.02];
\node[left] at (0,0) {1};
\node[right] at (1,0) {0};
\node[below] at (0.5,-0.21) {-1};
\end{scope}
\begin{scope}[shift={(-0.83,0.6)},scale=0.3]
\draw (0,0) to [out=45, in=135] (1,0);
\draw (0,0) to (1,0);
\draw (0,0) to [out=-45, in=-135] (1,0);
\draw[fill] (0,0) circle [radius=0.02];
\draw[fill] (1,0) circle [radius=0.02];
\draw[fill] (0.5,-0.21) circle [radius=0.02];
\node[left] at (0,0) {1};
\node[right] at (1,0) {0};
\node[below] at (0.5,-0.21) {-1};
\end{scope}
\begin{scope}[shift={(1.14,0.5)},scale=0.3]
\draw (0,0) to [out=45, in=135] (1,0);
\draw (0,0) to (1,0);
\draw (0,0) to [out=-45, in=-135] (1,0);
\draw[fill] (0,0) circle [radius=0.02];
\draw[fill] (1,0) circle [radius=0.02];
\draw[fill] (0.5,0.21) circle [radius=0.02];
\node[left] at (0,0) {1};
\node[right] at (1,0) {0};
\node[above] at (0.5,0.21) {-1};
\end{scope}
\begin{scope}[shift={(-1.45,-0.5)},scale=0.3]
\draw (0,0) to [out=45, in=135] (1,0);
\draw (0,0) to (1,0);
\draw (0,0) to [out=-45, in=-135] (1,0);
\draw[fill] (0,0) circle [radius=0.02];
\draw[fill] (1,0) circle [radius=0.02];
\draw[fill] (0.5,0.21) circle [radius=0.02];
\node[left] at (0,0) {1};
\node[right] at (1,0) {0};
\node[above] at (0.5,0.21) {-1};
\end{scope}
\begin{scope}[shift={(0.35,1.1)},scale=0.3]
\draw (0,0) to [out=45, in=135] (1,0);
\draw (0,0) to (1,0);
\draw (0,0) to [out=-45, in=-135] (1,0);
\draw[fill] (0,0) circle [radius=0.02];
\draw[fill] (1,0) circle [radius=0.02];
\draw[fill] (0.5,0) circle [radius=0.02];
\node[left] at (0,0) {1};
\node[right] at (1,0) {0};
\node[above] at (0.5,-0.05) {\tiny{-1}};
\end{scope}
\begin{scope}[shift={(-0.65,-1.1)},scale=0.3]
\draw (0,0) to [out=45, in=135] (1,0);
\draw (0,0) to (1,0);
\draw (0,0) to [out=-45, in=-135] (1,0);
\draw[fill] (0,0) circle [radius=0.02];
\draw[fill] (1,0) circle [radius=0.02];
\draw[fill] (0.5,0) circle [radius=0.02];
\node[left] at (0,0) {1};
\node[right] at (1,0) {0};
\node[above] at (0.5,-0.05) {\tiny{-1}};
\end{scope}
\end{tikzpicture}
\caption{The Jacobian $J_{p_0,\mu}^{trop}(X)$.}
\label{fig:jac}
\end{figure}
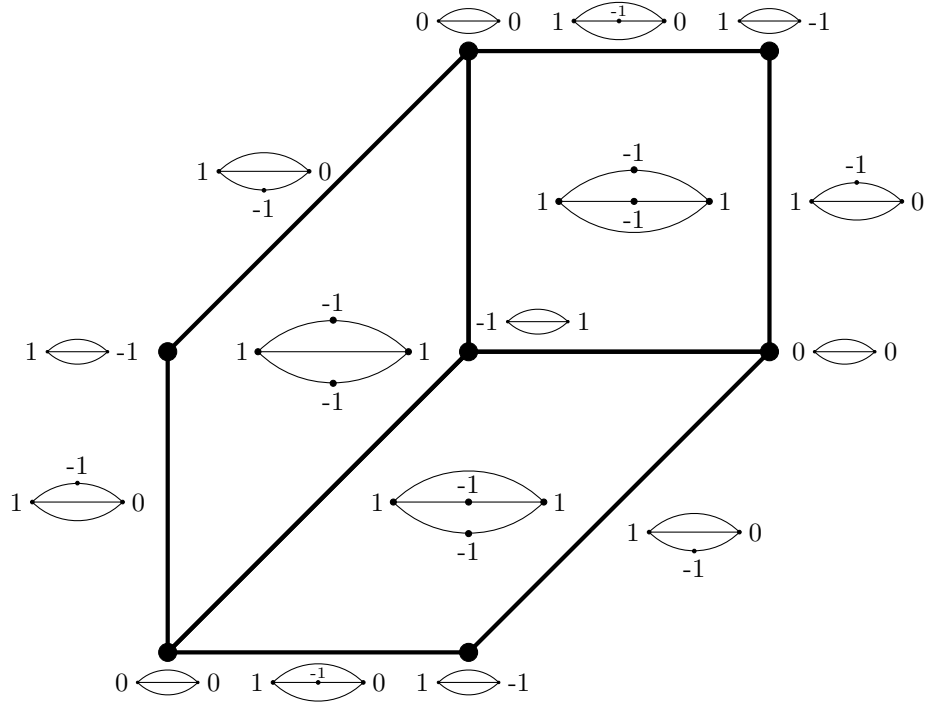

By the description of the poset $\mathcal{QD}_{p_0,\mu}(\Gamma_X)$ in Example \ref{exa:poset}, we see that $J^\trop_{p_0,\mu}(X)$ has $3$ cells of dimension 2, $6$ cells of  dimension $1$, and $3$ cells of dimension $0$. Note that the outer edges are identified making $J^\trop_{p_0,\mu}(X)$ a real torus. Also, the point associated to the zero divisor in $\Gamma_X$ is distinguished since it is contained in every cell of  dimension 1, while the other cells of  dimension 0 are contained in exactly $3$ cells of dimension 1.
\end{Exa}

\begin{Rem}
\label{rem:cont}
There is a natural map $f\col J^\trop_{p_0,\mu}(X)\to \mathcal{QD}_{p_0,\mu}(\Gamma_X)$ and this map is continuous. Indeed, if $(\E,D)$ is a pseudo-divisor on $\Gamma_X$ and $T=\overline{\{(\E,D)\}}$ is the closure of $\{(\E,D)\}$ in $\mathcal{QD}_{p_0,\mu}(\Gamma_X)$, then $f^{-1}(T)=\P_{(\E,D)}$ which is closed. So $f$ is continuous, since every closed set in $\mathcal{QD}_{p_0,\mu}(\Gamma_X)$ is a finite union of such closures.
\end{Rem}

Now we give some topological properties of the Jacobian of $X$ with respect to $(p_0,\mu)$, comparing it with the usual tropical Jacobian of $X$ (recall Equation \eqref{eq:Jtropdef}).

\begin{Thm}
\label{thm:jX}
Let $(X, p_0)$ be a pointed tropical curve and let $\mu$ be a degree-$d$ polarization on $X$. We have that $J^\trop_{p_0,\mu}(X)$ has pure dimension $g$,  it is connected in codimension $1$, and homeomorphic to $J^\trop(X)$.
\end{Thm}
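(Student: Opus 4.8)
The plan is to read off the first two assertions from the combinatorics of the poset $\mathcal{QD}_{p_0,\mu}(\Gamma_X)$, and to obtain the homeomorphism with $J^\trop(X)$ by combining Theorem \ref{thm:quasistable} with the tropical Abel--Jacobi map. By construction $J^\trop_{p_0,\mu}(X)$ has one closed cell $\P_{(\E,D)}$ of dimension $|\E|$ for each $(\E,D)\in\mathcal{QD}_{p_0,\mu}(\Gamma_X)$, and the face morphisms $\P_{(\E',D')}\to\P_{(\E,D)}$ reproduce the specialization order; in particular its maximal (respectively, codimension-$1$) cells correspond to the maximal (respectively, codimension-$1$) elements of the poset. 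First I would invoke Proposition \ref{prop:chain}, which says that $\mathcal{QD}_{p_0,\mu}(\Gamma_X)$ is ranked of length $g=b_1(\Gamma_X)$: then every maximal element has $|\E|=g$ and every element lies below a maximal one, so every cell is a face of a $g$-dimensional cell and $J^\trop_{p_0,\mu}(X)$ is pure of dimension $g$. Connectedness in codimension $1$ then follows verbatim from Proposition \ref{prop:cod1}, since a path in codimension $1$ in the poset (conditions (i)--(iv) there) is literally a path in codimension $1$ of the polyhedral complex joining the two chosen maximal cells.

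For the homeomorphism I would first fix the bijection. By the set-theoretic decomposition $J^\trop_{p_0,\mu}(X)=\coprod_{(\E,D)}\P^\circ_{(\E,D)}$, the points of $J^\trop_{p_0,\mu}(X)$ are in canonical bijection with the $(p_0,\mu)$-quasistable degree-$d$ divisors on $X$, a point of $\P^\circ_{(\E,D)}$ recording the positions in the interiors of the edges of $\E$ of the $-1$-chips of the associated quasistable divisor. Fixing a base divisor of degree $d$, sending a quasistable divisor $\D$ to its class $[\D]$ defines a map $\Phi\col J^\trop_{p_0,\mu}(X)\to\Pic^d(X)$. Theorem \ref{thm:quasistable} asserts exactly that each degree-$d$ class has a unique $(p_0,\mu)$-quasistable representative, so $\Phi$ is a bijection; composing with the standard identification $\Pic^d(X)\cong\Pic^0(X)=J^\trop(X)$ coming from the tropical Abel--Jacobi isomorphism recalled in \eqref{eq:Jtropdef} yields a bijection onto $J^\trop(X)$.

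It then remains to upgrade $\Phi$ to a homeomorphism. To prove continuity I would use the Abel--Jacobi map: on a fixed cell $\P_{(\E,D)}$ the vertex-chips of $\D$ stay put while each $-1$-chip slides along its edge $e\in\E$ as the corresponding coordinate ranges over $[0,\l(e)]$, so the image under $\Phi$ is the sum, with multiplicities, of the (continuous, piecewise-linear) Abel--Jacobi images of the support points, hence continuous in the cell coordinates. Since $J^\trop_{p_0,\mu}(X)$ is a finite union of compact polyhedra $\prod_{e\in\E}[0,\l(e)]$ it is compact, while $J^\trop(X)$ is a real torus and hence Hausdorff; a continuous bijection from a compact space to a Hausdorff space is automatically a homeomorphism, which finishes the argument.

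The step I expect to be the main obstacle is the compatibility of the cellwise maps along faces, which is exactly what is needed for $\Phi$ to descend to a continuous map on the colimit. When a coordinate $t_e\to 0$ (or $t_e\to\l(e)$) the sliding $-1$-chip merges into an endpoint $v\in V(\Gamma_X)$, and I would have to check that the limiting configuration is precisely the quasistable divisor attached to the face $(\E\setminus\{e\},D')$ produced by the corresponding specialization of pseudo-divisors, so that the Abel--Jacobi maps on two cells sharing a face agree on that face. Granting this matching, the cellwise continuous maps glue to a globally continuous $\Phi$, and the compactness-versus-Hausdorff argument concludes the proof.
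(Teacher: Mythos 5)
Your proposal is correct and takes essentially the same route as the paper: purity and connectedness in codimension $1$ are read off from Propositions \ref{prop:chain} and \ref{prop:cod1}, the bijection comes from Theorem \ref{thm:quasistable} together with the tropical Abel--Jacobi identification (the paper's map is $\D\mapsto[\D-dp_0]$), continuity is checked cell by cell via explicit affine integration maps, and the compact-to-Hausdorff argument finishes. The ``main obstacle'' you flag is not a genuine one: since your $\Phi$ is defined globally on the underlying point set (using the decomposition $\coprod\P^\circ_{(\E,D)}$), its restrictions to closed cells automatically agree on shared faces, and the only thing to unwind is that a boundary point of $\P_{(\E,D)}$ with $x_e\in\{0,\l(e)\}$ represents the divisor with the $-1$-chip at the corresponding vertex --- which is exactly how the face morphisms induced by specializations of pseudo-divisors are defined, a point the paper's proof likewise leaves implicit.
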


\begin{proof} 
The fact that $J^\trop_{p_0,\mu}(X)$ has pure dimension $g$ and connected in codimension $1$ follows from Propositions \ref{prop:chain} and \ref{prop:cod1}.\par
 There exists a function $\alpha\colon J^\trop_{p_0,\mu}(X)\to J^\trop(X)$ that takes a $(p_0,\mu)$-quasistable divisor  $\D$ to the class of $\D-dp_0$ in $J^\trop(X)$. It follows from Theorem \ref{thm:quasistable} that $\alpha$ is a bijection. We now prove that $\alpha$ is a homeomorphism by showing that it is continuous (this is enough since $J^\trop_{p_0,\mu}(X)$ is compact and $J^\trop(X)$ is Hausdorff).\par
Fix an orientation on $\Gamma_X$ and fix (not necessarily oriented) paths from $p_0$ to every vertex of $\Gamma_X$. Let $(\E,D)$ be a $(p_0,\mu)$-quasistable divisor on $\Gamma_X$ and define $D_0\in \Div(\Gamma_X)$ by $D_0(v)=D(v)$ for every $v\in V(\Gamma_X)$. Let $\D_0$ be the divisor on $X$ induced by $D_0$.  Given a divisor $\D$ on $X$ parametrized by a point in $\P_{(\E,D)}$, then there are real numbers $x_e\in[0,\l(e)]$ such that $\D=\D_0-\sum_{e\in\E}p_{e,x_e}$.\par
  For every  $\D\in \P_{(\E,D)}$, we define the path $\gamma(\D)$ as 
\[
\gamma(\D):=\gamma(\D_0)+\sum_{e\in\E}\overrightarrow{p_{e,0}p_{e,x_e}},
\]
   This induces a map 
\[
\alpha_{(\E,D)}\colon\P_{(\E,D)}\to\Omega(X)^\vee
\] 
taking a divisor $\D$ to $\int_{\gamma(\D)}$. More precisely, the map $\alpha_{(\E,D)}$ is given by
\begin{align*}
\alpha_{(\E,D)}\col\prod_{e\in\E}[0,\l(e)]&\to \Omega(X)^\vee\\
(p_{e,x_e})_{e\in\E}&\mapsto \int_{\gamma(\D_0)}+\sum_{e\in\E}\frac{x_e}{\l(e)}\int_{e}.
\end{align*}
This means that $\alpha_{(\E,D)}$ is an affine map, hence it is continuous. The composition of this map with the quotient map $\Omega(X)^\vee\ra J^{trop}(X)$  is the restriction of $\alpha$ to $\P_{(\E,D)}$. Since $J^\trop_{p_0,\mu}(X)$ is the direct limit of the polyhedra $\P_{(\E,D)}$, then $\alpha$ is continuous.
\end{proof}

Now we move to the universal setting: we define a universal tropical Jacobian, give a modular description of the points that it parametrizes, and we prove the analogue of the properties stated in Theorem \ref{thm:maingraph}.

\begin{Def}
Let $(\Gamma,v_0)$ be a graph with $1$ leg and $\mu$ be a degree-$d$ polarization on $\Gamma$.  For each $(v_0,\mu)$-quasistable pseudo-divisor $(\E,D)$ on $\Gamma$, we define
\[
\sigma_{(\Gamma,\E,D)}:=\mathbb{R}^{E(\Gamma^\E)}_{\geq0} \quad \text{and}\quad \sigma^\circ_{(\Gamma,\E,D)}:=\mathbb{R}^{E(\Gamma^\E)}_{>0}.
\]
Note that, if $\iota\col(\Gamma,\E,D)\ra(\Gamma',\E',D')$ is a specialization, then there exists a natural inclusion $\iota\col \sigma_{(\Gamma',\E',D')}\to \sigma_{(\Gamma,\E,D)}$.  
Let $\mu$ be a universal genus-$g$ polarization. The \emph{universal Jacobian with respect to $\mu$} is defined as the generalized cone complex 
\[
J^\trop_{\mu,g}:=\lim_{\longrightarrow} \sigma_{(\Gamma,\E,D)}=\coprod_{(\Gamma,\E,D)} \sigma_{(\Gamma,\E,D)}^\circ/\text{Aut}(\Gamma,\E,D)
\] 
where the union is taken over all terns $(\Gamma,\E,D)$ running through all objects in the category $\mathbf{QD}_{\mu,g}^{op}$ (the opposite of the category $\mathbf{QD}_{\mu,g}$). 
\end{Def}

\begin{Prop}
\label{prop:parameter}
Let $\mu$ be a universal genus-$g$ polarization. The generalized cone complex $J^\trop_{\mu,g}$ parametrizes equivalence classes $(X,\D)$, where $X$ is a stable pointed tropical curve of genus $g$ and $\D$ is a $(p_0,\mu)$-quasistable divisor on $X$, where $p_0$ is the marked point of $X$.
\end{Prop}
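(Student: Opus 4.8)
The plan is to exhibit an explicit bijection between the underlying set of $J^\trop_{\mu,g}=\coprod_{(\Gamma,\E,D)}\sigma^\circ_{(\Gamma,\E,D)}/\Aut(\Gamma,\E,D)$ and the set of equivalence classes of pairs $(X,\D)$, using as the two main engines Proposition \ref{prop:quasiquasi} and Corollary-Definition \ref{cor:quasiquasi}, which already translate quasistability between a tropical curve and its models. First I would define the map on a single stratum: a point of $\sigma^\circ_{(\Gamma,\E,D)}=\mathbb R^{E(\Gamma^\E)}_{>0}$ is a strictly positive length function $\ell$ on $\Gamma^\E$, and I associate to it the tropical curve $X$ with model $(\Gamma^\E,\ell)$, marked at $p_0:=v_0$, together with the divisor $\D$ obtained by viewing $D$ on $X$. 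Since the exceptional vertices of $\Gamma^\E$ have valence $2$, weight $0$, and no legs, while $\Gamma$ is stable, the minimal model of $X$ is $\Gamma$ (the length of a subdivided edge being the sum of the lengths of its two halves), so $X$ is a stable pointed genus-$g$ tropical curve; and because $\D(v_e)=-1$ on each exceptional vertex and all lengths are strictly positive, the model $\Gamma_{X,\D}$ is \emph{exactly} $\Gamma^\E$. As $(\E,D)$ is $(v_0,\mu)$-quasistable on $\Gamma$, Proposition \ref{prop:quasiquasi} then gives that $\D$ is $(p_0,\mu)$-quasistable on $X$, and since points in the same $\Aut(\Gamma,\E,D)$-orbit plainly yield isomorphic pairs, the map is well defined.

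For surjectivity I would run this construction in reverse. Given a stable pointed genus-$g$ tropical curve $X$ with a $(p_0,\mu)$-quasistable divisor $\D$, I set $\Gamma:=\Gamma_X$, the model whose vertices are the relevant points; since $\mu$ is supported on the vertices of the stable model one has $\Rel=V(X)$, so $\Gamma$ is the stable model of $X$ and hence a stable genus-$g$ graph with one leg. Corollary-Definition \ref{cor:quasiquasi} shows that $\Gamma_{X,\D}$ is the $\E$-subdivision $\Gamma^\E$ for a suitable $\E\subseteq E(\Gamma)$ and that $(\E,D)$ is a $(p_0,\mu)$-quasistable pseudo-divisor on $\Gamma$; thus $(\Gamma,\E,D)$ is an object of $\mathbf{QD}_{\mu,g}$, and the lengths induced by $X$ on $\Gamma^\E=\Gamma_{X,\D}$ are strictly positive, giving a point of $\sigma^\circ_{(\Gamma,\E,D)}$ that maps to $(X,\D)$.

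For injectivity the key observation is that all the data $\big(\Gamma_{X,\D},\ell,\D\big)$ are intrinsic to the equivalence class of $(X,\D)$: the vertex set $\Rel_\D$, the graph $\Gamma_{X,\D}$, its induced length function, and the divisor are canonically determined by $(X,\D)$ up to isomorphism. Hence any two preimages of $(X,\D)$ are terns that are isomorphic in $\mathbf{QD}_{\mu,g}$ and carry the same length function up to automorphism, i.e.\ they represent the same point of $J^\trop_{\mu,g}$. The only step that requires genuine care — and the main thing to verify — is the identification, for a given $(X,\D)$, of $\Gamma_{X,\D}$ with a subdivision $\Gamma_X^\E$ carrying the $-1$ pattern on exceptional vertices characteristic of a pseudo-divisor; this is precisely Corollary-Definition \ref{cor:quasiquasi}, resting in turn on Remark \ref{rem:subdivision} and Proposition \ref{prop:quasiquasi}. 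Once these identifications are assembled, the constructed map is a bijection, which is exactly the assertion of the proposition.
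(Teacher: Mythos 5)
Your proposal is correct and takes essentially the same approach as the paper: both arguments set up the set-theoretic bijection between points of $\coprod \sigma^\circ_{(\Gamma,\E,D)}/\Aut(\Gamma,\E,D)$ and isomorphism classes of pairs $(X,\D)$, resting on Proposition \ref{prop:quasiquasi} and Corollary \ref{cor:quasiquasi} together with the observation that $\Gamma_X$ coincides with the stable model of $X$ (since $\supp(\mu)\subset V(\Gamma_{st})$), and on the fact that $\Gamma_{X,\D}$ is exactly the $\E$-subdivision $\Gamma^\E$. The only difference is directional — you build the map from cone points to pairs and then verify surjectivity and injectivity, whereas the paper builds the inverse map from pairs to cone points — which amounts to the same verifications.
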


\begin{proof}
Let $(X,p_0)$ be a stable pointed tropical curve of genus $g$. The stable model $\Gamma:=\Gamma_{st}$ of $X$ is a genus-$g$ stable weighted graph with $1$ leg. The polarization $\mu_{\Gamma}$ induces a polarization on $X$, such that $\Gamma_X=\Gamma$ (recall that $\Gamma_X$ denotes the model of $X$ whose vertices are the relevant points).\par
   It follows from Corollary \ref{cor:quasiquasi} that, if $\D$ is a $(p_0,\mu)$-quasistable divisor of $X$, then $\Gamma_{X,\D}=\Gamma^\E$ for some $\E\subset E(\Gamma_X)$. Note that $(\E,D)$ is a $(p_0,\mu)$-quasistable pseudo-divisor on $\Gamma$, where $D$ is the  divisor on $\Gamma^\E$ induced by $\D$. Therefore $X$ corresponds to a point in $\sigma^\circ_{(\Gamma_X,\E,D)}=\R^{E(\Gamma^\E)}_{>0}$, and the pair $(X,\D)$ corresponds to a point in $J^\trop_{\mu,g}$. If $(X,\D)$ and $(X',\D')$ are isomorphic, then the construction above will give rise to the same point in $J^\trop_{\mu,g}$. 
On the other hand if $(X,\D)$ and $(X',\D')$ corresponds to the same point in $J^\trop_{\mu,g}$ that belongs to a cell $\sigma^\circ_{(\Gamma,\E,D)}/\Aut(\Gamma,\E,D)$, then there is an isomorphism $\iota\col\Gamma_X\ra\Gamma_{X'}$ such that $\iota(\E)=\E'$ and $\iota_*(D)=D'$. Moreover, it follows that the metrics of $X$ and $X'$ are equal, hence $\iota$ induces an isomorphism of metric graphs between $X$ and $X'$ taking $\Gamma_{X,\D}$ to $\Gamma_{X',\D'}$, and hence $\D$ to $\D'$. This means that $(X,\D)$ and $(X',\D')$ are isomorphic.\par
	Conversely, it is clear that, for every triple $(\Gamma,\E,D)$, every point in $\sigma^\circ_{(\Gamma,\E,D)}$ corresponds to a pair $(X,\D)$ with $X$ a stable pointed tropical curve of genus $g$ and $\D$ a $(p_0,\mu)$-quasistable divisor on $X$.
\end{proof}

It follows from Proposition \ref{prop:parameter} that we have a natural forgetful map 
\[
\pi^{trop}\col J^\trop_{\mu,g}\to M^\trop_{g,1}.
\]

\begin{Rem}
\label{rem:contmu}
Note that the natural map $f\col J^\trop_{\mu,g}\to \mathcal{QD}_{\mu,g}$ is continuous. One can prove this fact as  in Remark \ref{rem:cont}.
\end{Rem}

\begin{Thm}
\label{thm:maintrop} 
Let $\mu$ be a universal genus-$g$ polarization. 
The generalized cone complex $J^\trop_{\mu,g}$ has pure dimension $4g-2$ and is connected in codimension $1$. The map $\pi^{trop}\col J^\trop_{\mu,g}\to M^\trop_{g,1}$ is a map of generalized cone complexes. For every stable pointed tropical curve $X$ of genus $g$, we have a homeomorphism
\[
(\pi^{trop})^{-1}([X])\simeq J^\trop_{p_0,\mu}(X)/\Aut(X).
\]
\end{Thm}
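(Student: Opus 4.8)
The plan is to treat the three assertions in turn, transferring the combinatorial statements already proved for the poset $\mathcal{QD}_{\mu,g}$ to the cone complex $J^\trop_{\mu,g}$, and then analyzing the fibers of $\pi^{trop}$ directly. For the dimension and connectivity, I would first record that the cone $\sigma_{(\Gamma,\E,D)}=\R^{E(\Gamma^\E)}_{\geq0}$ has dimension $|E(\Gamma^\E)|=|E(\Gamma)|+|\E|$, since the $\E$-subdivision adds exactly one edge for each element of $\E$. By Theorem \ref{thm:maingraph}, a triple $(\Gamma,\E,D)$ is maximal precisely when $\Gamma$ is a $3$-regular genus-$g$ graph with one leg and trivial weights and $|\E|=g$; for such $\Gamma$ one has $|E(\Gamma)|=3g-2$ (the dimension of the top cones of $M^\trop_{g,1}$), so every maximal cone has dimension $(3g-2)+g=4g-2$, giving pure dimension $4g-2$. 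Since specializations of triples correspond exactly to face morphisms of the associated cones, with the cone dimension dropping by one along each covering relation (contracting one edge of $\Gamma^\E$, i.e.\ setting one length coordinate to zero), the notions of codimension $1$ in the poset and in the complex agree. Thus a path in codimension $1$ in $\mathcal{QD}_{\mu,g}$, which exists by Theorem \ref{thm:maingraph}, translates verbatim into a path in codimension $1$ of $J^\trop_{\mu,g}$, proving it is connected in codimension $1$.

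For the second assertion I would describe $\pi^{trop}$ cone by cone: on $\sigma_{(\Gamma,\E,D)}$ it is the map $\R^{E(\Gamma^\E)}_{\geq0}\to\R^{E(\Gamma)}_{\geq0}$ that adds up the lengths of the two half-edges lying over each subdivided edge, i.e.\ the refinement map of Equation \eqref{eq:hat} for the refinement $\Gamma^\E$ of $\Gamma$. This is an integral linear map carrying the cone into the corresponding cone of $M^\trop_{g,1}$, and it is visibly compatible with the face morphisms on both sides; hence $\pi^{trop}$ is a morphism of generalized cone complexes.

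The heart of the argument is the fiber identification. Fix a stable pointed tropical curve $X$ with stable model $\Gamma_X=\Gamma_{st}$ and length function $\ell$. I would first observe that the image of $\sigma^\circ_{(\Gamma,\E,D)}$ under $\pi^{trop}$ is the open cell of $M^\trop_{g,1}$ indexed by $\Gamma$, so only cones with $\Gamma\cong\Gamma_X$ can meet $(\pi^{trop})^{-1}([X])$. For such a cone, fixing the image length vector $\ell$ forces the two half-edges over each $e\in\E$ to have lengths $x_e$ and $\ell(e)-x_e$ while the remaining edges keep their fixed lengths; hence the fiber inside $\sigma^\circ_{(\Gamma_X,\E,D)}$ is exactly the open polyhedron $\P^\circ_{(\E,D)}=\prod_{e\in\E}(0,\ell(e))$ of Definition \ref{def:jtrop} (using Corollary-Definition \ref{cor:quasiquasi} to identify the induced pseudo-divisor). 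Assembling over all $(p_0,\mu)$-quasistable pseudo-divisors, the assignment $\D\mapsto(X,\D)$ defines a map $J^\trop_{p_0,\mu}(X)\to(\pi^{trop})^{-1}([X])$ which is affine, hence continuous, on each polyhedron $\P_{(\E,D)}$, and which by Proposition \ref{prop:parameter} descends to a bijection
\[
\Psi\col J^\trop_{p_0,\mu}(X)/\Aut(X)\to(\pi^{trop})^{-1}([X]).
\]
Since $J^\trop_{p_0,\mu}(X)$ is compact, being homeomorphic to the real torus $J^\trop(X)$ by Theorem \ref{thm:jX}, and $\Aut(X)$ is finite, the source is compact; as the target is Hausdorff, $\Psi$ is a continuous bijection from a compact space to a Hausdorff space and therefore a homeomorphism.

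I expect the main obstacle to be the bookkeeping in the fiber identification, and in particular the careful matching of the automorphism groups: the cell $\sigma^\circ_{(\Gamma_X,\E,D)}$ is quotiented by $\Aut(\Gamma_X,\E,D)$ inside $J^\trop_{\mu,g}$, whereas $J^\trop_{p_0,\mu}(X)$ is quotiented by $\Aut(X)$, and one must verify that restricting to the fiber over the fixed length vector $\ell$ cuts the combinatorial automorphisms down exactly to those preserving $\ell$, namely $\Aut(X)$. Confirming that the target fiber, with its subspace topology, is Hausdorff, so that the compact-to-Hausdorff argument applies, is the other point that requires care.
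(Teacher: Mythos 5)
Your proposal is correct and follows essentially the same route as the paper: dimension and codimension-one connectivity are transferred from Theorem \ref{thm:maingraph}, the map $\pi^{trop}$ is described cone-by-cone via the refinement map of Equation \eqref{eq:hat} composed with the quotient to $M^\trop_{g,1}$, and the fiber is identified through the natural map $J^\trop_{p_0,\mu}(X)\to J^\trop_{\mu,g}$ whose point-fibers are exactly $\Aut(X)$-orbits. Your compact-to-Hausdorff argument (via Theorem \ref{thm:jX} and finiteness of $\Aut(X)$) makes explicit the topological step that the paper's proof leaves implicit when it passes from the set-theoretic equality $\Im(h)=J^\trop_{p_0,\mu}(X)/\Aut(X)$ to the claimed homeomorphism.
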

\begin{proof}
The fact that $J^\trop_{\mu,g}$ has pure dimension $4g-2$ and is connected in codimension $1$ follows from Theorem \ref{thm:maingraph} and Remark \ref{rem:contmu}.\par
 For each cone $\sigma_{(\Gamma,\E,D)}$, we have that $\pi^{\trop}$ induces a map $\sigma_{(\Gamma,\E,D)}\to M_{g,1}^\trop$ that factors through a chain of maps
\[
\sigma_{(\Gamma,\E,D)}=\R_{\geq0}^{E(\Gamma^\E)}\to \R_{\geq 0}^{E(\Gamma)}\ra \R_{\geq 0}^{E(\Gamma)}/\Aut(\Gamma) \subset M_{g,1}^\trop,
\]
 where the first one is defined in Equation \eqref{eq:hat} and the second one is the natural quotient map. Hence $\pi^{trop}$ is a morphism of generalized cone complexes.\par
   There exists a natural map $h\col J^\trop_{p_0,\mu}(X)\to J^\trop_{\mu,g}$ and we have $(\pi^{trop})^{-1}([X])=\Im(h)$. Moreover, $h(\D)=h(\D')$ if and only if there exists an automorphism $\alpha\col X\to X$ such that $\alpha_*(\D)=\D'$, which implies that $\Im(h)=J^\trop_{p_0,\mu}(X)/\Aut(X)$.
\end{proof}

\begin{Rem}
It is easy to check that the maximal cells of $J^\trop_{\mu,g}$ are of type $\sigma_{(\Gamma,\E,D)}/\Aut(\Gamma,\E,D)$ where $\Gamma$ is a 3-regular graph with weight function $0$ and $|\E|=g$.
Moreover, the codimension $1$ cells of $J^\trop_{\mu,g}$ are of the following type:
    \begin{enumerate}
		  \item $\sigma_{(\Gamma,\E,D)}/\Aut(\Gamma,\E,D)$ where $\Gamma$ has weight function $0$ and has exactly one vertex of valence $4$ and all other vertices of valence $3$, and $|\E|=g$;
			\item $\sigma_{(\Gamma,\E,D)}/\Aut(\Gamma,\E,D)$ where $\Gamma$ has weight function $0$ on all vertices but exactly one vertex of weight $1$ and valence $1$ and all other vertices of valence $3$, and $|\E|=g$;
		  \item $\sigma_{(\Gamma,\E,D)}/\Aut(\Gamma,\E,D)$ where $\Gamma$ is a $3$-regular graph of weight $0$, and $|\E|=g-1$. 
		\end{enumerate}
\end{Rem}

We end this section introducing a compactification of $J^{\trop}_{\mu,g}$.
First of all, we set
\[
\overline{\sigma}_{(\Gamma,\E,D)}:=\overline{\R}_{\geq0}^{E(\Gamma^\E)}\quad \text{ and }\quad \overline{\sigma}^\circ_{(\Gamma,\E,D)}:=\overline{\R}_{>0}^{E(\Gamma^\E)},
\]
where $\ol{\R}=\R\cup\{\infty\}$.
Then the compactification $\overline{J}^{\trop}_{\mu,g}$ of $J^{\trop}_{\mu,g}$ is defined as
\[
\overline{J}^{\trop}_{\mu,g}=\lim_{\longrightarrow}\overline{\sigma}_{(\Gamma,\E,D)}
\]
where $(\Gamma,\E,D)$ runs through all objects in the category $\mathbf{QD}_{\mu,g}^{op}$. We note that  $\overline{J}^{\trop}_{\mu,g}$ has a structure of extended generalized cone complexes as defined in \cite[Section 2]{ACP}. Moreover the map $\pi^\trop\col J^\trop_{\mu,g}\to M^\trop_{g,1}$ of generalized cone complexes extends to a map of extended generalized cone complexes
\[
\overline{\pi}^\trop\col \overline{J}^\trop_{\mu,g}\to \overline{M}^\trop_{g,1}
\]
as defined in \cite[Section 2]{ACP}.
The map $\overline{\pi}^\trop$ when restricted to the cone $\overline{\R}_{\geq0}^{E(\Gamma^\E)}$ is the map $\overline{\R}_{\geq0}^{E(\Gamma^\E)}\to \overline{\R}_{\geq0}^{E(\Gamma)} $
defined in Equation \eqref{eq:hat} where $y_e=\infty$ if $x_{e'}=\infty$ for some edge  $e'\in E(\Gamma^\E)$ over $e$.
\par

\begin{Rem}
The points of the boundary of $\overline{J}^\trop_{\mu,g}$ parametrizes pairs $(X,\D)$ where $X$ is an extended tropical curve (in the sense of \cite[Section 3.3]{Caporaso}) and $\D$  a $(p_0,\mu)$-quasistable divisor on $X$.  Here, the notion of $(p_0,\mu)$-quasistability for extended tropical curves can be given exactly as for tropical curves in Definition \ref{def:quasistable} (considering extended tropical subcurves). We do not know if there is an analogous of Theorem \ref{thm:quasistable} for extended tropical curves.
\end{Rem}

\section{The skeleton of the Esteves' universal Jacobian}\label{sec:skeleton}

\subsection{The Esteves' universal compactified Jacobian}\label{sec:EstJac}

  Let $(X,p_0)$ be a pointed nodal curve defined over an algebraically closed field $k$. Recall that a coherent sheaf $\I$ on $X$ is \emph{torsion-free} if it has no embedded components, \emph{rank-1} if it is invertible on a dense open subset of $C$, and \emph{simple} if 
$\text{Hom}(\I,\I)=k$. The \emph{degree} of $I$ is $\deg I=\chi(I)-\chi(\O_X)$.

Given an integer $d$, the \emph{degree-$d$ Jacobian} $\J_d(X)$ of $X$ is the scheme parametrizing the equivalence classes of invertible sheaves of degree $d$ on $X$. In general, $\J_d(X)$  is neither proper nor of finite type. A better behaved parameter space is obtained by 
resorting to torsion-free  rank-$1$ sheaves and to stability conditions.\par

The scheme $\J_d(X)$ is an open dense subscheme of the scheme $\mathcal{S}pl_d(X)$ parametrizing simple torsion-free rank-$1$ sheaves of degree-$d$ on $X$. We refer to \cite{AK} and \cite{Es01} for the construction of $\mathcal{S}pl_d(X)$ and its properties. Recall that $\mathcal{S}pl_d(X)$  is universally closed over $k$ and connected 
but, in general, not separated and only locally of finite type. To deal with a 
manageable piece of it, we resort to polarizations.

Let  $X_1,\ldots,X_m$ be the irreducible components of $X$. A \emph{degree-$d$ polarization on $X$} is any $m$-tuple of rational numbers $\mu=(\mu_1,\ldots,\mu_m)$ summing up to $d$.  For every proper subcurve $Y$ of $X$, we set 
  \[
  \mu(Y):=\sum_{X_i\subset Y} \mu_i.
  \]
The notion of a degree-$d$ polarization on a curve can also be given by a vector bundle $\mathcal{F}$ on $X$ such that $\deg(\mathcal{F})=-d\cdot\rank(\mathcal{F})$. In this case $\mu_i=-\deg(\mathcal{F}|_{X_i})/\rank(\mathcal{F})$ for every $i=1,\ldots,m$ (see \cite[Section 1.2]{Es01} and \cite[Remark 4.6]{KP}).\par
	
Note that $\mu$ can be seen as a degree-$d$ polarization on the dual graph $(\Gamma_X, v_0)$ of the pointed curve $X$. Conversely, every polarization on the dual graph $(\Gamma_X, v_0)$ induces a polarization on $X$.\par

Let $I$ be a rank-$1$ degree-$d$ torsion-free sheaf on $X$. We can define a pseudo-divisor $(\E_I,D_I)$ on $\Gamma_X$ as follows. The set $\E_I\subset E(\Gamma_X)$ is precisely the set of edges corresponding to nodes where $I$ is not locally free. For every $v\in V(\Gamma^\E)$, we set
\[
D_I(v)=\begin{cases}
        \deg(I|_{X_v}),&\text{ if $v\in V(\Gamma_X)$};\\
				-1,&\text{ if $v$ is exceptional},
				\end{cases}
\]
where $X_v$ is the component of $X$ corresponding to $v\in V(\Gamma_X)$. 
We call $(\E_I,D_I)$ the \emph{multidegree} of $I$. We say that $I$ is $\mu$-semistable  (respectively, \emph{$(p_0,\mu)$-quasistable}) if its multidegree $(\E_I,D_I)$ is a  $\mu$-semistable (respectively, $(v_0,\mu)$-quasistable) pseudo-divisor on $\Gamma_X$. We call $(\Gamma_X,\E_I,D_I)$ the \emph{dual graph} of $(X,I)$.
\par

Let $\mathcal{QD}_d(\Gamma_X)$ be the poset of all pseudo-divisors of degree-$d$ on $\Gamma_X$. The above construction gives rise to an anti-continuous function
\begin{align*}
\mathcal{S}pl_d(X)&\to \mathcal{QD}_d(\Gamma_X)\\
             I&\mapsto (\E_I,D_I).
\end{align*}		

The above notions naturally extend to families. 
 Let $f\col\X\to T$ be a  family of pointed nodal curves with section $\sigma\col T\to\X$.   
A polarization $\mu$ on $\X$ is the datum of polarizations on the fibers of $f$ that are compatible with specializations. We say that a sheaf $\I$ over $\X$ is \emph{$(\sigma,\mu)$-quasistable} if, for any closed point $t\in T$, the restriction of $\I$ to the fiber $f^{-1}(t)$ is a torsion-free rank-$1$ and $(\sigma(t),\mu)$-quasistable sheaf. There is an algebraic space $\ol{\J}_{\pi,\mu}$ parametrizing $(\sigma,\mu)$-quasistable sheaves over $\X$. This algebraic space is proper and of finite type   (\cite[Theorems A and B]{Es01}) and it represents the contravariant functor $\mathbf{J}_{\pi,\mu}$ from the category of locally Noetherian $T$-schemes to sets, defined on a $T$-scheme $B$ by
\[
\mathbf{J}_{\pi,\mu}(B):=\{(\sigma_B,\mu_B)\text{-quasistable sheaves over } \X\times_T B\stackrel{\pi_B}\lra B\}/\sim
\]
where $\sigma_B$ and $\mu_B$ are the pullback to $\X\times_T B$ of the section $\sigma$ and polarization $\mu$, and where $\sim$ is the equivalence relation given by $\I_1\sim \I_2$ if and only if there exists an invertible sheaf $\L$ on $B$ such that $\I_1\cong \I_2\otimes \pi_B^*\L$. \par

  This construction can be extended to the universal setting. More precisely, let $\overline{\M}_{g,1}$ be the Deligne-Mumford stack parametrizing stable pointed genus-$g$ curves, and let $\M_{g,1}$ be its open locus. Let $\overline{\M}_{g,2}\to\overline{\M}_{g,1}$ be the universal family over $\overline{\M}_{g,1}$. Let $\J_{d,g,1}\ra\M_{g,1}$ be the universal degree-$d$ Jacobian parametrizing invertible sheaves of degree-$d$ on smooth fibers of $\overline{\M}_{g,2}\to\overline{\M}_{g,1}$. 

For each universal degree-$d$ polarization $\mu$ over $\overline{\M}_{g,2}\to\overline{\M}_{g,1}$, there is  a proper and separated Deligne-Mumford stack $\ol{\J}_{\mu,g}$ over $\overline{\M}_{g,1}$ containing $\J_{d,g,1}$ as open dense subset. For every scheme $S$, we have 
\[
\ol{\J}_{\mu,g}(S)=\frac{\left\{(\pi,\sigma,\I);\begin{array}{l} \pi\col \X\to S\text{ is a family of stable pointed genus-g curves,}\\ \I \text{ is a $(\sigma,\mu)$-quasistable torsion free rank-1 sheaf on $\X$}\end{array}\right\}}{\sim}
\] 
where $(\pi_1,\sigma_1,\I_1)\sim(\pi_2,\sigma_2,\I_2)$ if there exist a $S$-isomorphism $f\col \X_1\to \X_2$ and an invertible sheaf $\L$ on $S$, such that $\sigma_2=f\circ\sigma_1$ and $\I_1\cong f^*\I_2\otimes\pi_1^*\L$. We refer to \cite[Theorems A and B]{M15} and \cite[Corollary 4.4 and Remark 4.6]{KP} for more details on the stack $\ol{\J}_{\mu,g}$. In what follows we will consider the universal compactified Jacobian $\ol{\J}_{\mu,g}$ where $\mu$ is the canonical polarization (recall Example \ref{exa:pol}).

\subsection{The stratification of $\overline{\J}_{\mu,g}$}

In this section we study the stratification of the open embedding $\J_{d,g,1}\subset \overline{\J}_{\mu,g}$ and we collect some local properties of $\overline{\J}_{\mu,g}$.

Let $\J_{(\Gamma,\E,D)}$ be the substack of $\overline{\J}_{\mu,g}$ that parametrizes tuples $(\pi\col \X\to S,\sigma,\I)$ where for each $s\in S$ we have that $(\X_s,\I|_{\X_s})$ have dual graph isomorphic to $(\Gamma,\E,D)$.   Let  $\J_{(\Gamma,\E,D)}\to \M_{\Gamma}$ be the forgetful map, and consider 
\[
\widetilde{\M}_\Gamma:=\prod_{v\in V(\Gamma)}\M_{w(v),\val(v)+\l(v)}.
\] Let $\C_\Gamma\to \M_{\Gamma}$ be the universal family over $\M_\Gamma$ and define $\widetilde{\C}_\Gamma:=\C_\Gamma\times_{\M_\Gamma} \widetilde{\M}_\Gamma$. Consider the partial normalization $\widetilde{\C}_{\Gamma,\E}$ of $\widetilde{\C}_\Gamma$ over the nodes corresponding to edges in $\E$, and form the commutative diagram
\[
\SelectTips{cm}{11}
\begin{xy} <16pt,0pt>:
\xymatrix{ \widetilde{\C}_{\Gamma,\E}\ar[dr]_{f}\ar[r] &\widetilde{\C}_\Gamma \ar[d]\ar[r]& \C_\Gamma\ar[d]\\
                                                   & \widetilde{\M}_\Gamma\ar[r] & \M_\Gamma   
}
\end{xy}
\]	
Let $F$ be the divisor on $\Gamma_\E$  such that $F(v)=\val_\E(v)$ (recall that $\Gamma_\E$ is the graph obtained by removing the edges in $\E$ of $\Gamma$). Let $\widetilde{\J}_{(\Gamma,\E,D)}:=\J_{f, D_\E-F}$ be the relative Jacobian  parametrizing invertible sheaves on $\widetilde{\C}_{\Gamma,\E}$ with multidegree $D_\E-F$.  If $\L$ is the universal invertible sheaf over $\widetilde{\C}_{\Gamma,\E}\times_{\widetilde{\M}_\Gamma} \widetilde{\J}_{(\Gamma,\E,D)}$ then the pushforward of $\L$ to $\widetilde{\C}_\Gamma\times_{\widetilde{\M}_\Gamma}\widetilde{\J}_{(\Gamma,\E,D)}$ is a torsion free rank-$1$ sheaf with multidegree $(\E,D)$. Hence we have a map $\widetilde{\J}_{(\Gamma,\E,D)}\to \J_{(\Gamma,\E,D)}$ inducing a map 
\[
g\col \widetilde{\J}_{(\Gamma,\E,D)}\to \J_{(\Gamma,\E,D)}\times_{\M_\Gamma}\widetilde{\M}_{\Gamma}.
\]
which is an isomorphism onto its image.
We have a diagram
\[
\SelectTips{cm}{11}
\begin{xy} <16pt,0pt>:
\xymatrix{ \widetilde{\J}_{(\Gamma,\E,D)}\ar[dr]_{h}\ar[r]^{g\;\quad\quad} &\J_{(\Gamma,\E,D)}\times_{\M_{\Gamma}} \widetilde{\M}_{\Gamma} \ar[d]\ar[r]& \widetilde{\M}_\Gamma\ar[d]\\
                                                   & \J_{(\Gamma,\E,D)}\ar[r] & \M_\Gamma   
}
\end{xy}
\]	
where $h$ is the composition of the projection and $g$. Recall that $\M_{\Gamma}=[\widetilde{\M}_\Gamma/\Aut(\Gamma)]$ (see \cite[Proposition 3.4.1]{ACP}); now we prove an analogous result for $\J_{\Gamma,\E,D}$.

\begin{Prop}\label{prop:stratades}
We have an isomorphism 
\[
\J_{(\Gamma,\E,D)}\simeq\left[\frac{\widetilde{\J}_{(\Gamma,\E,D)}}{\Aut(\Gamma,\E,D)}\right]
\]
 and $h$ is the quotient map.
\end{Prop}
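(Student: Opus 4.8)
The plan is to derive the isomorphism from the presentation $\M_\Gamma = [\widetilde{\M}_\Gamma/\Aut(\Gamma)]$ recorded just before the statement (from \cite[Proposition 3.4.1]{ACP}), by pulling back the forgetful morphism $\J_{(\Gamma,\E,D)}\to\M_\Gamma$ along the smooth atlas $\widetilde{\M}_\Gamma\to\M_\Gamma$ and analyzing the resulting $\Aut(\Gamma)$-space. Writing $G:=\Aut(\Gamma)$ and $H:=\Aut(\Gamma,\E,D)\le G$, the morphism $\widetilde{\M}_\Gamma\to\M_\Gamma$ is the quotient presentation, so pulling back gives a canonical identification
\[
\J_{(\Gamma,\E,D)}\cong\left[\left(\J_{(\Gamma,\E,D)}\times_{\M_\Gamma}\widetilde{\M}_\Gamma\right)\big/G\right],
\]
where $G$ acts on the fiber product through its action on $\widetilde{\M}_\Gamma$.

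The main step is to identify the fiber product $P:=\J_{(\Gamma,\E,D)}\times_{\M_\Gamma}\widetilde{\M}_\Gamma$ as an induced $G$-space. A point of $P$ is a curve $X$ equipped with a marking $\phi\col\Gamma_X\to\Gamma$ of its dual graph (the datum carried by the atlas $\widetilde{\M}_\Gamma$) together with a torsion-free rank-$1$ sheaf $\I$ on $X$ whose multidegree $(\Gamma_X,\E_\I,D_\I)$ is isomorphic to $(\Gamma,\E,D)$. Transporting the multidegree along $\phi$ produces a pseudo-divisor $\phi_*(\E_\I,D_\I)$ on $\Gamma$ lying in the $G$-orbit of $(\E,D)$; since the dual graph is constant on $\J_{(\Gamma,\E,D)}$ and $\phi$ is locally constant, this discrete invariant is constant on connected families, so it cuts $P$ into open-and-closed pieces indexed by the orbit $G\cdot(\E,D)\cong G/H$. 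The morphism $g$ realizes $\widetilde{\J}_{(\Gamma,\E,D)}$ as the piece $P_{eH}$ on which $\phi_*(\E_\I,D_\I)=(\E,D)$: this is exactly what the statement ``$g$ is an isomorphism onto its image'' encodes, since requiring $\I$ to fail to be locally free precisely along $\E$ and to have multidegree $D$ is the same as prescribing a line bundle of multidegree $D_\E-F$ on the partial normalization $\widetilde{\C}_{\Gamma,\E}$, which is what $\widetilde{\J}_{(\Gamma,\E,D)}$ parametrizes.

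I would then observe that $G$ permutes the pieces $P_{\psi H}$ according to its natural action on $G/H$, with $\Stab_G(P_{eH})=H$, so that $P\cong G\times_H\widetilde{\J}_{(\Gamma,\E,D)}$ as $G$-spaces, the equivariance being that of the pushforward-of-a-line-bundle construction. Applying the standard identity $[(G\times_H Z)/G]\cong[Z/H]$ for quotient stacks then yields
\[
\J_{(\Gamma,\E,D)}\cong[P/G]\cong\left[\widetilde{\J}_{(\Gamma,\E,D)}\big/H\right]=\left[\frac{\widetilde{\J}_{(\Gamma,\E,D)}}{\Aut(\Gamma,\E,D)}\right],
\]
and unwinding the identifications shows that $h$, being the composite of $g$ with the projection $P\to\J_{(\Gamma,\E,D)}$, is exactly the quotient map.

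The hard part will be verifying the disjoint-union decomposition of $P$ rigorously at the level of stacks rather than on points: one must check that the $\Aut(\Gamma)$-translates of $\Im(g)$ are genuinely disjoint and exhaust $P$, which rests on the discreteness of the pseudo-divisor invariant together with the $G$-equivariance of the pushforward construction, and one must confirm that the sheaf-equivalence $\I_1\sim\I_2\otimes\pi^*\L$ defining $\overline{\J}_{\mu,g}$ matches the line-bundle equivalence defining the relative Jacobian $\widetilde{\J}_{(\Gamma,\E,D)}$, so that $g$ is an isomorphism onto $P_{eH}$ as stacks and not merely a bijection on isomorphism classes.
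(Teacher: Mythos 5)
Your proposal is correct and follows essentially the same route as the paper: both pull back along the atlas $\widetilde{\M}_\Gamma\to\M_\Gamma$, decompose the fiber product $\J_{(\Gamma,\E,D)}\times_{\M_\Gamma}\widetilde{\M}_\Gamma$ into open-and-closed pieces indexed by the cosets of $\Aut(\Gamma,\E,D)$ in $\Aut(\Gamma)$ via the discrete pseudo-divisor invariant (with $\Im(g)$ as the identity piece), and conclude that the $\Aut(\Gamma)$-quotient agrees with the quotient of $\widetilde{\J}_{(\Gamma,\E,D)}$ by $\Aut(\Gamma,\E,D)$. The only cosmetic difference is that where you invoke the standard identity $[(G\times_H Z)/G]\cong[Z/H]$, the paper proves it in situ, using coset representatives $\sigma_i$ to assemble an $\Aut(\Gamma)$-invariant morphism onto $[\widetilde{\J}_{(\Gamma,\E,D)}/\Aut(\Gamma,\E,D)]$ that inverts the map induced by $g$.
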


\begin{proof}
First, note that
\[
\J_{(\Gamma,\E,D)}=\left[\frac{\J_{(\Gamma,\E,D)}\times_{\M_{\Gamma}}\widetilde{\M}_{\Gamma}}{\Aut(\Gamma)}\right],
\]
and we have a morphism
\begin{equation}
\label{eq:Jaut}
\left[\frac{\widetilde{\J}_{(\Gamma,\E,D)}}{\Aut(\Gamma,\E,D)}\right]\to\left[\frac{\J_{(\Gamma,\E,D)}\times_{\M_{\Gamma}}\widetilde{\M}_{\Gamma}}{\Aut(\Gamma)}\right]
\end{equation}
induced by the equivariant morphism $g$. Let us construct the inverse of this morphism.\par
  Let $\sigma$ be an automorphism of $\Gamma$. Define 
\[
g_\sigma\col \widetilde{\J}_{(\Gamma,\sigma(\E),\sigma(D))}\to \J_{(\Gamma,\sigma(\E),\sigma(D))}\times_{\M_\Gamma}\widetilde{\M}_\Gamma
\]
 as we did for $g$. Note that $\J_{(\Gamma,\sigma(\E),\sigma(D))}=\J_{(\Gamma,\E,D)}$ in $\ol{\J}_{\mu, g}$ for every $\sigma\in\Aut(\Gamma)$.\par

Since $\M_\Gamma=[\widetilde{\M}_\Gamma/\Aut(\Gamma)]$, a point of $\J_{(\Gamma,\E,D)}\times_{\M_{\Gamma}}\widetilde{\M}_{\Gamma}$ parametrizes a triple $(X,I,\tau)$, where  $[X]\in \M_\Gamma$ and $I$ is a torsion-free rank-1 sheaf on $X$, and where $\tau\col\Gamma_X\to \Gamma$ is an isomorphism such that $\tau_*(\E_I,D_I)=(\sigma(\E),\sigma(D))$ for some $\sigma\in\Aut(\Gamma)$. Moreover, the triple $(X,I,\tau)$ is parametrized by a point in $g_\sigma(\widetilde{\J}_{\Gamma,\sigma(\E),\sigma(D)})$ if and only if $\tau_*(\E_I,D_I)=(\sigma(\E),\sigma(D))$. So if
\[
g_\sigma(\widetilde{\J}_{(\Gamma,\sigma(\E),\sigma(D))})\cap g_{\sigma'}(\widetilde{\J}_{(\Gamma,\sigma'(\E),\sigma'(D))})\neq\emptyset
\]
then $(\sigma(\E),\sigma(D))=(\sigma'(\E),\sigma'(D))$, and hence $\sigma\Aut(\Gamma,\E,D)=\sigma'\Aut(\Gamma,\E,D)$. Conversely, if $\sigma\Aut(\Gamma,\E,D)=\sigma'\Aut(\Gamma,\E,D)$, then $(\sigma(\E),\sigma(D))=(\sigma'(\E),\sigma'(D))$, which implies $g_\sigma(\widetilde{\J}_{(\Gamma,\sigma(\E),\sigma(D))})=g_{\sigma'}(\widetilde{\J}_{(\Gamma,\sigma'(\E),\sigma'(D))})$.\par
	Then, we have
\[
\J_{(\Gamma,\E,D)}\times_{\M_{\Gamma}}\widetilde{\M}_{\Gamma}=\coprod_{i=1}^{N}g_{\sigma_i}(\widetilde{\J}_{(\Gamma,\sigma_i(\E),\sigma_i(D))}),
\]
where $N:=[\Aut(G)\col\Aut(\Gamma,\E,D)]$ and $\sigma_i$ are chosen as representatives of the left cosets of $\Aut(\Gamma,\E,D)$ in $\Aut(\Gamma)$. Note that $\Aut(\Gamma)$ identifies the connected components of $\J_{(\Gamma,\E,D)}\times_{\M_{\Gamma}}\widetilde{\M}_{\Gamma}$, because $\sigma(g_{\sigma'}(\widetilde{\J}_{(\Gamma,\sigma'(\E),\sigma'(D))})=g_{\sigma\sigma'}(\widetilde{\J}_{(\Gamma,\sigma\sigma'(\E),\sigma\sigma'(D))})$. The chosen elements $\sigma_i$ can be used to define morphisms 
\[
\rho_i:=\sigma_i^{-1}|_{g_{\sigma_i}(\widetilde{\J}_{(\Gamma,\sigma_i(\E),\sigma_i(D))})}\col g_{\sigma_i}(\widetilde{\J}_{(\Gamma,\sigma_i(\E),\sigma_i(D))})\to g(\widetilde{\J}_{(\Gamma,\E,D)})\cong \widetilde{\J}_{(\Gamma,\E,D)}
\]
 giving rise to an $\Aut(\Gamma)$-invariant morphism
\[
\J_{(\Gamma,\E,D)}\times_{\M_{\Gamma}}\widetilde{\M}_{\Gamma}\stackrel{(\rho_i)_{1\le i\le N}}{\longrightarrow}\widetilde{\J}_{(\Gamma,\E,D)}\to\left[\frac{\widetilde{\J}_{(\Gamma,\E,D)}}{\Aut(\Gamma,\E,D)}\right],
\]
which in turn induces the inverse of the morphism in Equation \eqref{eq:Jaut}.
\end{proof}

We need some results on the local geometry of $\ol{\J}_{\mu,g}$, where $\mu$  the canonical polarization. We will deduce them from known results about Caporaso-Pandharipande's compactification $\overline{\J}_{d,g,1}$ of $\J_{d,g,1}$. Let us introduce the moduli stack  $\overline{\J}_{d,g,1}$.

Let $\J_{d,g}\ra \M_{g}$ be the universal degree-$d$ Jacobian, parametrizing invertible sheaves on smooth curves. In \cite{C}, \cite{Pand} and \cite{C08}, Caporaso and Pandharipande introduced a moduli stack $\overline{\J}_{d,g}$ compactifying $\J_{d,g}$ over $\ol{\M}_g$. This stack can be viewed either as a moduli space of certain invertible sheaves, called \emph{balanced}, on semistable curves (this is done in \cite{C} and \cite{C08}), or as a moduli space of certain torsion-free rank-1 sheaves, called \emph{semistable}, on stable curves (this is done in \cite{Pand}). The two approaches give rise to isomorphic stacks (see \cite[Theorem 6.3]{EP}). For our purposes, it is better to follow the latter approach. 

The setting can be extended to pointed curves. 
We let $\ol{\J}_{d,g,1}$ be the contravariant functor from the category of schemes to that of sets, taking a $k$-scheme $S$ to
\[
\ol{\J}_{d,g,1}(S)=\frac{\left\{(\pi,\sigma,\I);\begin{array}{l} \pi\col \X\to S\text{ is a family of stable pointed genus-g curves,}\\ \I \text{ is a $\mu$-semistable torsion free rank-1 sheaf on $\X$}\end{array}\right\}}{\sim}
\] 
where $\sim$ is the same equivalence relation defined at the end of Section \ref{sec:EstJac}. Essentially the same proof of \cite[Theorem 6.3]{EP} shows that $\ol{\J}_{d,g,1}$ is isomorphic to the stack $\overline{\P}_{d,g,1}$ defined in \cite[Definition 4.1]{M11} by means of balanced invertible sheaves. Hence it follows from \cite[Theorem 4.2]{M11} that  $\ol{\J}_{d,g,1}$ is a smooth and irreducible algebraic Artin stack of dimension $4g -2$ and it is universally closed  over $\overline{\M}_{g,1}$.

\begin{Lem}\label{lem:openimm}
There exists an open immersion of stacks $\overline{\J}_{\mu,g}\to \overline{\J}_{d,g,1}$. In particular for every point $[(X,I)]\in \overline{\J}_{\mu,g}$ we have
\[
\wh{\O}_{\overline{\J}_{\mu,g},[(X,I)]}\simeq \wh{\O}_{ \overline{\J}_{d,g,1},[(X,I)]}.
\]
\end{Lem}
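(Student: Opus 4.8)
The plan is to realize the arrow as the inclusion of the quasistable locus into the semistable locus and to check the three ingredients of an open immersion: representability, the monomorphism property, and openness on an atlas. First I would build the morphism at the level of the defining functors. Both $\overline{\J}_{\mu,g}$ and $\overline{\J}_{d,g,1}$ are moduli of triples $(\pi\col\X\to S,\sigma,\I)$ with $\pi$ a family of stable pointed genus-$g$ curves and $\I$ a torsion-free rank-$1$ sheaf, taken modulo the same equivalence relation $\sim$ (twisting by a line bundle pulled back from $S$); they differ only in that $\overline{\J}_{\mu,g}$ imposes $(\sigma,\mu)$-quasistability while $\overline{\J}_{d,g,1}$ imposes $\mu$-semistability. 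Since every $(v_0,\mu)$-quasistable pseudo-divisor is $\mu$-semistable, fibrewise quasistability implies fibrewise semistability, so regarding a quasistable family as a semistable one defines a morphism of stacks $\overline{\J}_{\mu,g}\to\overline{\J}_{d,g,1}$.

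Next I would verify that this morphism is a monomorphism, hence representable. This is immediate from the moduli descriptions: an isomorphism in $\overline{\J}_{d,g,1}(S)$ between the images of two objects of $\overline{\J}_{\mu,g}(S)$ is an isomorphism of the underlying families carrying one sheaf to the other up to $\sim$, which is exactly an isomorphism in $\overline{\J}_{\mu,g}(S)$; thus the functor is fully faithful. To finish it suffices to show that for a smooth atlas $U\to\overline{\J}_{d,g,1}$ the base change $\overline{\J}_{\mu,g}\times_{\overline{\J}_{d,g,1}}U\to U$ is an open immersion of algebraic spaces, i.e.\ that the $(\sigma,\mu)$-quasistable locus is open in $U$. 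Because quasistability of $\I|_{\X_s}$ depends only on the combinatorial datum $(\Gamma_{\X_s},\E_{\I_s},D_{\I_s})$, this locus is a union of the strata $\J_{(\Gamma,\E,D)}$ with $(\Gamma,\E,D)$ quasistable. By Proposition \ref{prop:spec}(i) quasistability is preserved under specialization of pseudo-divisors, so the set of quasistable pseudo-divisors is a lower set in the poset of degree-$d$ pseudo-divisors; via the anti-continuity of the multidegree map $\mathcal{S}pl_d(X)\to\mathcal{QD}_d(\Gamma_X)$, the preimage of a lower (hence closed) set is open, so this union of strata is open in $U$.

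Having produced a representable monomorphism that is an open immersion on an atlas, I conclude that $\overline{\J}_{\mu,g}\to\overline{\J}_{d,g,1}$ is an open immersion, and the isomorphism $\wh{\O}_{\overline{\J}_{\mu,g},[(X,I)]}\simeq\wh{\O}_{\overline{\J}_{d,g,1},[(X,I)]}$ follows since an open immersion induces an isomorphism on complete local rings at every point of its image. I expect the only genuinely delicate point to be the openness in families: the anti-continuity is recorded for a single curve, so the argument must be carried out after base change to an atlas, using that the multidegree is locally constant along the fibres and that the stratification by dual graphs is by locally closed substacks whose closures are governed by the specialization order on pseudo-divisors.
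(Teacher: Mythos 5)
Your overall strategy coincides with the paper's: the morphism exists because every $(p_0,\mu)$-quasistable sheaf is $\mu$-semistable, and the entire content of the lemma is the openness of the quasistable locus inside the semistable one. The paper disposes of that second point with a single citation: quasistability is an open condition in families by \cite[Proposition 34]{Es01}. Your attempt to reprove this openness combinatorially is where the gap lies. The anti-continuity of the multidegree map $\mathcal{S}pl_d(X)\to\mathcal{QD}_d(\Gamma_X)$ is recorded (and true) only for a \emph{fixed} curve $X$; combined with the lower-set property of quasistable pseudo-divisors (which does follow from Proposition \ref{prop:spec}(i), applied to the contractions $\Gamma^\E\to\Gamma^{\E'}$), it gives openness of the quasistable locus in $\mathcal{S}pl_d(X)$, i.e., along deformations of the sheaf with the curve held fixed. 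It says nothing about deformations in which the curve itself is smoothed, which is exactly what must be controlled on an atlas $U\to\ol{\J}_{d,g,1}$: a point of $U$ lying over a nodal curve has generizations lying over less degenerate curves, and one needs to know that the dual graph of such a generization is a specialization, in the poset sense, of the dual graph at the special point. This closure relation among strata in families is the whole content of the openness claim; you identify it yourself as ``the only genuinely delicate point,'' but then you assert it rather than prove it.

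Note also that the natural in-paper reference for that closure relation is Proposition \ref{prop:stratification}, which (a) concerns $\ol{\J}_{\mu,g}$ rather than $\ol{\J}_{d,g,1}$, and (b) is proved \emph{after} Lemma \ref{lem:openimm}: its toroidal assertions rest on Equation \eqref{eq:Jloc}, which is deduced from this very lemma. So quoting it here would require observing that the implication you need (the analogue for the semistable Jacobian of \eqref{thm-strata1}$\Rightarrow$\eqref{thm-strata2}, proved via one-parameter degenerations and \cite[Proposition 3.4.1]{CC}) is independent of the lemma --- it is, but that argument would have to be carried out for $\ol{\J}_{d,g,1}$, and you do not do so. The efficient repair is the paper's: after constructing the injection from the implication ``quasistable $\Rightarrow$ semistable,'' invoke \cite[Proposition 34]{Es01}, which states precisely that quasistability is open in any family of torsion-free rank-$1$ sheaves; this makes the stratification machinery unnecessary at this point and avoids any circularity.
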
	

\begin{proof}
Every $(p_0,\mu)$-quasistable torsion-free rank 1 sheaf on a stable pointed curves is $\mu$-semistable, hence there exists an injective map $\overline{\J}_{\mu,g}\to \overline{\J}_{d,g,1}$. It is an open immersion because quasistability is an open condition (see \cite[Proposition 34]{Es01}).
\end{proof}

\begin{Rem}
In general, the open immersion of Lemma \ref{lem:openimm} is not an isomorphism: there are $\mu$-semistable torsion free rank-$1$ sheaves that are not $(p_0,\mu)$-quasistable.
\end{Rem}

Let  $[(X,I)]$ be a point in $\overline{\J}_{\mu,g}$ and $(\Gamma,\E,D)$ be the dual graph of $(X,I)$. It follows from  Lemma \ref{lem:openimm} and \cite[Equation 8.4]{CKV} that, up to choosing an orientation of $\Gamma$,
	\begin{equation}
	\label{eq:Jloc}
	\widehat{\mathcal{O}}_{\overline{\J}_{\mu,g}[(C,I)]}=A\otimes k[[W_1,\ldots, W_{3g-2-|E(\Gamma)|},Z_1,\ldots, Z_{g-|\E|}]],
	\end{equation}
where 
\[
A:=\bigotimes_{e\in \E}\frac{k[[X_e,Y_e,T_e]]}{\langle X_eY_e-T_e\rangle}\otimes\left(\bigotimes_{e\in E(\Gamma)\setminus \E} k[[T_e]]\right).
\]

The variables appearing above have the following modular interpretations. The variables $W$'s correspond to variations of the complex structure of the irreducible components of $X$, the variables $Z$'s correspond to deformations of $I$ where it is invertible, the variable $T_e$ corresponds to the smoothing of $X$ at the node associated to $e$, the variables $X_e$ and $Y_e$ correspond to the deformation of $I$ at the node associated to $e$. Moreover, $X_e$ and  $Y_e$ correspond to the source and target of $e$, respectively; the construction is independent of the choice of the orientation.

We note that, although the results in \cite{CKV} hold for $\overline{\J}_{d,g}$, their arguments are based on  deformation theory and can be naturally extended to the pointed case needed to analyze $\overline{\J}_{d,g,1}$.\par
 The local ring at $[X]$ of $\overline{\M}_{g,1}$ is
	\begin{equation}
	\label{eq:Mloc}
	\widehat{\O}_{\overline{\M}_{g,1},[X]}=\bigotimes_{e\in E(\Gamma)} k[[T_e]]\otimes k[[W_1,\ldots, W_{3g-2-|E(\Gamma)|}]],
	\end{equation}
	and the forgetful map $\pi\col\overline{\J}_{\mu,g}\to \overline{\M}_{g,1}$ is induced locally at $[(X,I)]$ by the ring homomorphism $\pi^\#\col \widehat{\O}_{\overline{\M}_{g,1},[X]}\ra \widehat{\mathcal{O}}_{\overline{\J}_{\mu,g},[(X,I)]}$ given by $T_e\mapsto T_e$ and $W_i\mapsto W_i$ (see \cite[Equation 7.17]{CKV}).

Recall the notion of toroidal embedding of Deligne-Mumford stacks given in \cite[Definition 6.1.1]{ACP}.

	\begin{Prop}\label{prop:stratification}
The open immersion $\J_{d,g,1}\subset \overline{\J}_{\mu,g}$ is a toroidal embedding of Deligne-Mumford stacks and the forgetful map $\overline{\J}_{\mu,g}\to \ol{\M}_{g,1}$ is a toroidal morphism. Moreover, we have a partition  
		\begin{equation}\label{eq:strata}
		\overline{\J}_{\mu,g}=\coprod_{(\Gamma,\E,D)\in \mathcal{QD}_{\mu,g}}\J_{(\Gamma,\E,D)},
		\end{equation}
where each $\J_{(\Gamma,\E,D)}$ is irreducible, and the following properties are equivalent for every $(\Gamma,\E,D),(\Gamma',\E',D')\in \mathcal{QD}_{\mu,g}$:
\begin{enumerate}
 \item
 \label{thm-strata1}
$\J_{(\Gamma,\E,D)}\cap \overline{\J}_{(\Gamma',\E',D')}\neq \emptyset$;
\item
 \label{thm-strata2}
  $(\Gamma,\E,D)\geq (\Gamma',\E',D')$;
\item
 \label{thm-strata3}
$\J_{(\Gamma,\E,D)}\subset \ol{\J}_{(\Gamma',\E',D')}$.
\end{enumerate}
	\end{Prop}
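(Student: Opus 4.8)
The plan is to read essentially everything off the explicit local description of $\ol{\J}_{\mu,g}$ recorded in Equation \eqref{eq:Jloc}, together with the quotient presentation of the strata in Proposition \ref{prop:stratades}. First I would dispatch the two toroidal claims. Using the relation $X_eY_e=T_e$ to eliminate $T_e$ for $e\in\E$ identifies the ring $A$ with the regular ring $k[[X_e,Y_e\,;\,e\in\E]]\otimes\bigl(\bigotimes_{e\in E(\Gamma)\setminus\E}k[[T_e]]\bigr)$, so that $\wh{\O}_{\ol{\J}_{\mu,g},[(X,I)]}$ is a formal power series ring in which the locus parametrizing singular curves or non-invertible sheaves --- that is, the complement of $\J_{d,g,1}$, where at least one node persists --- is cut out by the single monomial $\prod_{e\in\E}X_eY_e\cdot\prod_{e\in E(\Gamma)\setminus\E}T_e$. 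This is a normal crossings divisor, so the pair is formally isomorphic to a toric chart with its toric boundary and the embedding $\J_{d,g,1}\subset\ol{\J}_{\mu,g}$ is toroidal; since $\pi^\#$ sends $T_e\mapsto T_e$ for $e\notin\E$, $T_e\mapsto X_eY_e$ for $e\in\E$, and $W_i\mapsto W_i$, it is a monomial map carrying the boundary of $\ol{\M}_{g,1}$ to that of $\ol{\J}_{\mu,g}$, hence $\pi$ is a toroidal morphism in the sense of \cite[Section 6]{ACP}.

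The partition \eqref{eq:strata} is tautological: associating to $[(X,I)]$ its dual graph $(\Gamma_X,\E_I,D_I)$ defines a map $\ol{\J}_{\mu,g}\to\mathcal{QD}_{\mu,g}$ whose fibers are by definition the $\J_{(\Gamma,\E,D)}$; the fibers are nonempty and exhaust $\mathcal{QD}_{\mu,g}$ precisely because a torsion-free rank-$1$ sheaf is $(p_0,\mu)$-quasistable exactly when its multidegree is a $(v_0,\mu)$-quasistable pseudo-divisor. For irreducibility I would invoke Proposition \ref{prop:stratades} to reduce to $\wt{\J}_{(\Gamma,\E,D)}=\J_{f,D_\E-F}$, a relative Jacobian over $\wt{\M}_\Gamma=\prod_{v}\M_{w(v),\val(v)+\l(v)}$. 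The base is irreducible, being a product of (irreducible) moduli of pointed curves; because $(\E,D)$ is quasistable, $\E$ is nondisconnecting by Proposition \ref{prop:spec}, so the fibers of $f$ are connected nodal curves and the fibers of $\wt{\J}_{(\Gamma,\E,D)}\to\wt{\M}_\Gamma$ are torsors under connected semiabelian Picard schemes, hence irreducible. An irreducible fibration over an irreducible base is irreducible, and irreducibility descends to the quotient stack $\J_{(\Gamma,\E,D)}$.

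For the equivalences, (3)$\Rightarrow$(1) is immediate since every stratum is nonempty, so it suffices to prove that for $[(X,I)]\in\J_{(\Gamma,\E,D)}$ one has $[(X,I)]\in\ol{\J}_{(\Gamma',\E',D')}$ if and only if $(\Gamma,\E,D)\ge(\Gamma',\E',D')$. This single statement yields both (1)$\Rightarrow$(2) and (2)$\Rightarrow$(3), the latter because its right-hand side depends only on the dual graph and is therefore constant along $\J_{(\Gamma,\E,D)}$. I would prove it by reading the strata through $[(X,I)]$ off the toric coordinates of \eqref{eq:Jloc}: for $e\notin\E$ the chamber $T_e\ne0$ smooths the node, contracting $e$; for $e\in\E$ the chamber $X_e,Y_e\ne0$ both smooths $e$ and makes $I$ locally free, while $X_e=0\ne Y_e$ (or the reverse) keeps the node but makes $I$ locally free there, shifting the multidegree by one across the branch. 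Matching these monomial degenerations with the contraction/deletion operations defining the order on $\mathcal{QD}_{\mu,g}$, the strata $\J_{(\Gamma',\E',D')}$ meeting every neighborhood of $[(X,I)]$ are exactly those with $(\Gamma,\E,D)\ge(\Gamma',\E',D')$.

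The main obstacle is exactly this last matching. One must verify that turning on the coordinates $X_e,Y_e,T_e$ produces precisely the pseudo-divisors obtained by the specialization operations of Section \ref{sec:quasigraph}, and that each intermediate multidegree shift is again $(v_0,\mu)$-quasistable. Concretely, the modular meaning of $X_e$ and $Y_e$ --- deforming $I$ at a non-locally-free node toward one of its two branches --- must be translated into the two elementary specializations $(\{e\},D)\to(\emptyset,D_i)$ appearing in the proof of Proposition \ref{prop:cod1}, and this bookkeeping must be carried out over all of $\E$ simultaneously and compatibly with the $\Aut(\Gamma,\E,D)$-action of Proposition \ref{prop:stratades}. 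This is where the care is required; the rest is formal.
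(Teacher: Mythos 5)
Your proof of the toroidal claims and of the partition and irreducibility statements is correct and is essentially the paper's own argument: toroidality of the embedding and of $\pi$ is read off Equations \eqref{eq:Jloc} and \eqref{eq:Mloc} exactly as you do, the partition is the tautological one by dual graphs, and irreducibility of $\J_{(\Gamma,\E,D)}$ follows from the surjection $h\col\wt{\J}_{(\Gamma,\E,D)}\to\J_{(\Gamma,\E,D)}$ of Proposition \ref{prop:stratades}. The paper takes irreducibility of $\wt{\J}_{(\Gamma,\E,D)}$ for granted; your fibration argument (irreducible base $\wt{\M}_\Gamma$, fibers that are torsors under connected semiabelian group schemes because $\E$ is nondisconnecting by Proposition \ref{prop:spec}) is a correct way to fill in that point.

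The equivalences (1)--(3) are where your proposal stops short of a proof, and you say so yourself: the ``matching'' you defer to careful bookkeeping is not a routine verification --- it is the entire mathematical content of the statement. To make your local argument work you must prove that the generization of $[(X,I)]$ determined by a given vanishing pattern of the coordinates $X_e,Y_e,T_e$ has dual graph equal to the pushforward of $(\Gamma,\E,D)$ under the corresponding specialization of triples, and that every stratum meeting a neighborhood of $[(X,I)]$ arises from such a pattern; this requires promoting the informal modular interpretation of the coordinates stated after \eqref{eq:Jloc} into a precise deformation-theoretic assertion (which is the content of \cite{CKV}, not a formal manipulation of monomials). One of the two difficulties you flag is in fact vacuous: intermediate multidegrees are automatically $(v_0,\mu)$-quasistable, since every nearby point lies in $\ol{\J}_{\mu,g}$ and hence parametrizes a quasistable sheaf by definition of the moduli problem; the genuine issue is solely the identification of the dual graphs of the generizations. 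The paper sidesteps this local analysis entirely and proves the equivalences by global one-parameter degenerations: for (1)$\Rightarrow$(2) it chooses a family over a base $B$ with special fiber $(X,I)$ and generic fiber in $\J_{(\Gamma',\E',D')}$, replaces the torsion-free sheaf $\I$ by the line bundle $\O_{\wt{\X}}(-1)$ on the semistable model $\wt{\X}=\mathbb{P}(\I^\vee)$, and applies \cite[Proposition 3.4.1]{CC}; for (2)$\Rightarrow$(3) it invokes the smoothing result \cite[Proposition 3.4.2]{CC} on $\mathbb{P}(I^\vee)$ and then contracts the exceptional rational curves and pushes the line bundle forward, using \cite[Propositions 5.4 and 5.5]{EP}. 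Your route could be completed into a legitimate alternative proof, but as written the decisive step is asserted rather than proved; either carry out the coordinate-to-combinatorics dictionary in full, or substitute the paper's degeneration argument.
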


\begin{proof}
The fact that $\J_{d,g,1}\subset \overline{\J}_{\mu,g}$ and the map $\overline{\J}_{\mu,g}\to \M_{g,1}$ are toroidal follows from Equations \eqref{eq:Jloc} and \eqref{eq:Mloc}, and from the local description of the map $\pi$.\par
  The partition \eqref{eq:strata} follows from the fact that each pair $(X,I)$ has a unique dual graph, up to isomorphism.	The locus $\J_{(\Gamma,\E,D)}$ is irreducible because the map $h\col \widetilde{\J}_{(\Gamma,\E,D)}\ra \J_{(\Gamma,\E,D)}$ is surjective by Proposition \ref{prop:stratades}, and $\widetilde{\J}_{(\Gamma,\E,D)}$ is irreducible.\par
	To prove the equivalences we will use \cite[Proposition 3.4.1]{CC} and \cite[Proposition 3.4.2]{CC}. These propositions concern invertible sheaves. However, we have a canonical construction for passing from the setting of torsion-free rank-$1$ sheaves to the one of invertible sheaves. More precisely, if $\I$ is a torsion free rank-$1$ sheaf on a family of nodal curves $\X\to B$, we can construct a family of nodal curves $\widetilde{\X}\to B$ as $\widetilde{\X}:=\mathbb{P}(\I^\vee)=\text{Proj}(\text{Sym}(\I^\vee))$ which will add a $\mathbb{P}^1$ over each node where $\I$ is not locally free. Moreover, we have that $\I$ is the pushforward of $\O_{\widetilde{\X}}(-1)$ via the structural morphism $\widetilde{\X}\to \X$ (see \cite[Proposition 5.5]{EP}). \par
		Let us prove that \eqref{thm-strata1} implies \eqref{thm-strata2}. If $[(X,I)]$ is a point in $\J_{(\Gamma,\E,D)}\cap \ol{\J}_{(\Gamma',\E',D')}$, then there exists a family of stable pointed curves $\X\to B$ with generic fiber in $\J_{(\Gamma',\E',D')}$ and special fiber $X$, and a $(\sigma,\mu)$-quasistable torsion-free rank-$1$ sheaf $\I$ on $\X$ that restricts to $I$ on $X$. If we take $\widetilde{\X}=\mathbb{P}(\I^\vee)$, then the dual graph of the generic fiber of $\widetilde{\X}$ will be $\Gamma'^{\E'}$, while the dual graph of the special fiber will be $\Gamma^\E$. As in \cite[Proposition 3.4.1]{CC}, we get a specialization $\iota\col\Gamma^\E\to\Gamma'^{\E'}$ with $\iota_*(D)=D'$, hence $(\Gamma,\E,D)\geq (\Gamma',\E',D')$.\par		
		Let us prove that \eqref{thm-strata2} implies \eqref{thm-strata3}. Assume that there is a specialization $\iota\col\Gamma^\E\to\Gamma'^{\E'}$ such that $\iota_*(D)=D'$; in particular $\E'$ is a subset of $\E$. Fix a point $[(X,I)]$ in $\J_{(\Gamma,\E,D)}$ and take $\widetilde{X}:=\mathbb{P}(I^\vee)$. Note that the dual graph of $(\widetilde{X},\O_{\wt{X}}(-1))$ is $(\Gamma^\E,\emptyset,D)$. Applying \cite[Proposition 3.4.2]{CC}, we find a family of nodal curves $\widetilde{\X}\to B$ and a line bundle $\L$ on $\widetilde{\X}$, such that the generic fiber of $(\widetilde{\X},\L)$ has dual graph $(\Gamma'^{\E'},\emptyset, D')$ and has special fiber $(\widetilde{X},\O_{\wt X}(-1))$. Let $f\col\widetilde{\X}\to \X$ be the contraction of all rational curves in fibers corresponding to edges in $\E$ and set $\I:=f_*\L$. So $\X\to B$ is a family of stable pointed curves and $\I$ is a torsion-free rank-$1$ sheaf such 
 the generic fiber of $(\X,\I)$ has dual graph $(\Gamma',\E',D')$ and special fiber $(X,I)$ (see \cite[Propositions 5.4 and 5.5]{EP}). This shows that $\J_{(\Gamma,\E,D)}\subset\ol{\J}_{(\Gamma',\E',D')}$.\par
		Finally, it is clear that \eqref{thm-strata3} implies \eqref{thm-strata1}.
\end{proof}

\begin{Rem}
Note that Proposition \ref{prop:stratification} tells us that the decomposition   \eqref{eq:strata} is a stratification of $\overline{\J}_{\mu,g}$ by $\mathcal{QD}_{\mu,g}$, in the sense of \cite[Definition 1.3.2]{CC}.
\end{Rem}

\subsection{The tropicalization of $\ol{\J}_{\mu,g}$}
The goal of this section is to show that the skeleton of the Esteves' universal Jacobian $\ol{\J}_{\mu,g}$ is precisely $\overline{J}^{\trop}_{\mu,g}$.

Let $\mathcal Y$ be a separated connected Deligne-Mumford stack over an algebraically closed field $k$. There is a Berkovich analytification $\Y^{an}$ of $\Y$ which is an analytic stack. We will work with the topological space underlying $\Y^{an}$, whose points are morphisms $\Spec(K)\to \Y$, where $K$ is a non Archimedean valued field extension of the trivially valued field $k$, up to equivalence by further valued field extensions  (see \cite{Be} and \cite[Section 3]{U17}). We abuse notation and use $\Y^{an}$ for both the stack and the topological space underlying it. Note that one can choose a representative $\Spec(K)\to \Y$ with $K$ complete for each point in $\Y^{an}$.  There is a distinguished subspace $\Y^{\beth}\subset \Y^{an}$ consisting of points $\Spec(K)\to \Y$ that extends to $\Spec(R)\to \Y$, where $R$ is the valuation ring of $K$. If $\Y$ is proper then $\Y^{\beth}=\Y^{an}$. \par
  
 We recall the notion of monodromy associated to a toroidal embedding. Let $U\subset \Y$ be a toroidal embedding of Deligne-Mumford stacks. Define the sheaves $\mon_{\Y}$ and $\eff_{\Y}$ as the \'etale sheaves over $\Y$ such that, for every \'etale morphism $V\to \Y$ from a scheme $V$, we have that $\mon_\Y(V)$ (respectively, $\eff_\Y(V)$) is the group of Cartier divisors on $V$ (respectively, the submonoid of effective Cartier divisors on $V$) supported on $V\setminus U_V$, where $U_V=U\times_\Y V$. \par
	For each stratum $W\subset \Y$ and a point $w\in W$, we have an action of the \'etale fundamental group $\pi_1^{et}(W,w)$ on the stalk $\mon_{\Y,w}$ preserving $\eff_{\Y,w}$. The \emph{monodromy group} $H_W$ is defined as the image of $\pi_1^{et}(W,w)$ in $\Aut(\mon_{\Y,w})$.\par
   For each stratum $W\subset \Y$, there is an associated extended cone
\[
\ol{\sigma}_W:=\Hom_{\text{monoids}}(\eff_{\Y,w},\ol{\R}_{\geq0}).
\]
One defines an extended generalized cone complex, called the \emph{skeleton} of $\Y$, as
\[
\ol{\Sigma}(\mathcal Y):=\lim_{\lra}\ol{\sigma}_W,
\]
where the arrows $\ol{\sigma}_W\to\ol{\sigma}_{W'}$ are given by the inclusions $W'\subset \ol{W}$, where $\ol W$ is the closure of $W$ in $\Y$, and are also given, when $W=W'$, by the monodromy group $H_W$. For more details, see \cite{T} and \cite[Section 6]{ACP}.

There is a retraction map ${\bf p}_{\mathcal Y}\col \Y^{\beth}\ra \ol{\Sigma}(\mathcal Y)$ defined as follows. Let $\psi\col \Spec(R)\to \Y$ be a point in $\Y^{\beth}$, with $R$ complete. Let $w\in \Y$ be the image of the closed point in $\Spec(R)$ and $W$ be the stratum of $\Y$ containing $w$. We have a chain of maps
\[
\eff_{\Y,w}\stackrel{\epsilon}{\lra} \widehat{\O}_{\Y,w}\stackrel{\psi^\#}{\lra} R\stackrel{\nu_R}{\lra} \ol{\R}_{>0},
\]
where $\epsilon$ is the map that takes an effective divisor to its local equation and $\nu_R$ is the valuation of $R$. Note that the composition is a morphism of monoids: one  defines ${\bf p}_{\Y}(\psi)\in \ol{\Sigma}(\Y)$ as the equivalence class of $\nu_R\circ\psi^\#\circ\epsilon\in\ol{\sigma}_{W}$. We refer to \cite[Section 6]{ACP} for details, in particular to \cite[Propositions 6.1.4. and 6.2.6]{ACP}.

The inclusions $\mathcal M_{g,n}\subset \ol{\mathcal M}_{g,n}$ and $\mathcal \J_{d,g,1}\subset \ol{\mathcal J}_{\mu,g}$ are embeddings of Deligne-Mumford stacks  (see \cite[Section 3.3]{ACP} and Proposition \ref{prop:stratification}). We let $\ol{\Sigma}(\ol{\mathcal {M}}_{g,n})$ and  $\ol{\Sigma}(\ol{\mathcal {J}}_{\mu,g})$  be the skeleta of $\ol{\mathcal  M}_{g,n}$ and $\ol{\mathcal  J}_{\mu,g}$, respectively.

The forgetful map $\pi\colon \ol{\mathcal  J}_{\mu,g}\ra \ol{\mathcal M}_{g,n}$ induces a natural map 
\[
\pi^{an}\col\ol{\mathcal J}_{\mu,g}^{an} \ra \ol{\mathcal M}_{g,n}^{an}.
\]

\begin{Prop}\label{prop:funct}
The map $\pi^{an}\col\ol{\mathcal J}_{\mu,g}^{an} \ra \ol{\mathcal M}_{g,n}^{an}$ restricts to a map of generalized extended cone complexes $\ol{\Sigma}(\pi)\col  \ol{\Sigma}(\ol{\J}_{\mu,g}) \ra  \ol{\Sigma}(\ol{\mathcal M}_{g,n})$. We have 
\[
{\bf p} _{\ol{\M}_{g,1}}\circ\pi^{an}=\ol{\Sigma}(\pi)\circ{\bf p} _{\ol{\J}_{\mu,g}}.
\] 
\end{Prop}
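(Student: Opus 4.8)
The plan is to deduce the statement from the functoriality of the skeleton construction under toroidal morphisms, using the explicit local description of $\pi$ recorded in Equations \eqref{eq:Jloc} and \eqref{eq:Mloc}. First I would use Proposition \ref{prop:stratification} to note that $\pi$ is a toroidal morphism and that $\J_{d,g,1}\subset \pi^{-1}(\M_{g,1})$, so that the preimage of the boundary $\ol{\M}_{g,1}\setminus\M_{g,1}$ is contained in the boundary $\ol{\J}_{\mu,g}\setminus\J_{d,g,1}$. Consequently, pulling back a Cartier divisor supported on the boundary of $\ol{\M}_{g,1}$ produces one supported on the boundary of $\ol{\J}_{\mu,g}$; the explicit form $\pi^\#(T_e)=T_e$ guarantees that the relevant local equations pull back to non-zero-divisors, so the pullback of effective boundary divisors is well defined. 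For each $w\in\ol{\J}_{\mu,g}$ with image $\pi(w)$ this yields a homomorphism of monoids $\pi^*\col \eff_{\ol{\M}_{g,1},\pi(w)}\to \eff_{\ol{\J}_{\mu,g},w}$, compatible with the \'etale-local structure. Dualizing via $\Hom_{\text{monoids}}(-,\ol{\R}_{\geq0})$ gives, for each stratum $W\subset\ol{\J}_{\mu,g}$ lying over a stratum $W'\subset\ol{\M}_{g,1}$, a map of extended cones $\ol{\sigma}_W\to\ol{\sigma}_{W'}$ sending $\phi$ to $\phi\circ\pi^*$.

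Second, I would check that these cone maps assemble into a morphism of extended generalized cone complexes $\ol{\Sigma}(\pi)$, i.e. that they are compatible with the face morphisms coming from inclusions of strata closures and with the monodromy groups $H_W$; both are functorial under $\pi^*$. Here the local descriptions make the map completely explicit: the boundary divisors of $\ol{\M}_{g,1}$ near $[X]$ are the $\{T_e=0\}$ for $e\in E(\Gamma)$, while those of $\ol{\J}_{\mu,g}$ near $[(X,I)]$ are the $\{T_e=0\}$ for $e\in E(\Gamma)\setminus\E$ together with $\{X_e=0\}$ and $\{Y_e=0\}$ for $e\in\E$. Since $\pi^\#(T_e)=T_e$ and $T_e=X_eY_e$ for $e\in\E$, the pullback sends $\{T_e=0\}$ to $\{X_e=0\}+\{Y_e=0\}$. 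Dualizing, the induced map $\ol{\sigma}_{(\Gamma,\E,D)}=\ol{\R}_{\geq0}^{E(\Gamma^\E)}\to\ol{\R}_{\geq0}^{E(\Gamma)}=\ol{\sigma}_{[X]}$ is exactly the map of Equation \eqref{eq:hat}, which is precisely the local form of $\ol{\pi}^\trop$; this confirms that $\ol{\Sigma}(\pi)$ is a well-defined morphism of extended generalized cone complexes.

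Third, I would establish the commutativity of the retraction square by unwinding the definition of the retraction map. Fix $\psi\col\Spec(R)\to\ol{\J}_{\mu,g}$ in $\ol{\J}_{\mu,g}^{\beth}$ with $R$ complete (note $\ol{\J}_{\mu,g}^{\beth}=\ol{\J}_{\mu,g}^{an}$ by properness), let $w$ be the image of the closed point and $W$ its stratum. By definition ${\bf p}_{\ol{\J}_{\mu,g}}(\psi)$ is the class of $\nu_R\circ\psi^\#\circ\epsilon_{\ol{\J}_{\mu,g}}$, and applying $\ol{\Sigma}(\pi)$ precomposes with $\pi^*$, yielding the class of $\nu_R\circ\psi^\#\circ\epsilon_{\ol{\J}_{\mu,g}}\circ\pi^*$. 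On the other hand $\pi^{an}(\psi)=\pi\circ\psi$ satisfies $(\pi\circ\psi)^\#=\psi^\#\circ\pi^\#$, so ${\bf p}_{\ol{\M}_{g,1}}(\pi^{an}(\psi))$ is the class of $\nu_R\circ\psi^\#\circ\pi^\#\circ\epsilon_{\ol{\M}_{g,1}}$. The two agree once one knows
\[
\epsilon_{\ol{\J}_{\mu,g}}\circ\pi^*=\pi^\#\circ\epsilon_{\ol{\M}_{g,1}}
\]
as maps $\eff_{\ol{\M}_{g,1},\pi(w)}\to\widehat{\O}_{\ol{\J}_{\mu,g},w}$, which is exactly the assertion that the local equation of a pulled-back boundary divisor equals the pullback of its local equation. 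This is immediate from the definition of pullback of Cartier divisors and the fact that $\pi^\#$ is a ring homomorphism.

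The main obstacle will be the second step: passing rigorously from the \'etale-local pullback of the monoid sheaves to a genuine morphism of the glued extended generalized cone complexes, in particular verifying compatibility with the monodromy identifications $H_W$ and the face morphisms in the stacky setting. Everything else is a formal consequence of the definitions of ${\bf p}$ and $\ol{\Sigma}$ once this functoriality is in place; alternatively one may invoke the general functoriality of the skeleton under toroidal morphisms developed in \cite[Section 6]{ACP}.
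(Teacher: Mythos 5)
Your proposal is correct, and it amounts to unpacking, in explicit local coordinates, the general result that the paper simply cites. The paper's own proof is two lines: properness gives $\ol{\J}_{\mu,g}^{an}=\ol{\J}_{\mu,g}^{\beth}$ and $\ol{\M}_{g,1}^{an}=\ol{\M}_{g,1}^{\beth}$, and then the statement follows by combining the functoriality of the skeleton under toroidal morphisms, \cite[Proposition 6.1.8]{ACP}, with Proposition \ref{prop:stratification}. You instead re-derive that functoriality by hand: you build the monoid pullback $\pi^*$ on $\eff$, dualize to maps of extended cones, and verify the retraction square from the identity $\epsilon_{\ol{\J}_{\mu,g}}\circ\pi^*=\pi^\#\circ\epsilon_{\ol{\M}_{g,1}}$ (which holds up to units, enough since units have valuation zero). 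This more self-contained route has a genuine payoff the citation does not give: using $\pi^\#(T_e)=T_e$ and $T_e=X_eY_e$ from Equations \eqref{eq:Jloc} and \eqref{eq:Mloc}, you identify the induced cone maps with the map of Equation \eqref{eq:hat}, i.e.\ with the local form of $\ol{\pi}^{\trop}$; the paper only makes this identification later, inside the proof of Theorem \ref{thm:tropj}. The one step you rightly flag as delicate --- compatibility of $\pi^*$ with the face morphisms and with the monodromy groups $H_W$ in the stacky colimit --- is precisely what \cite[Proposition 6.1.8]{ACP} packages; to do it by hand you would check that the $\pi_1^{et}(W,w)$-action on $\mon_{\ol{\J}_{\mu,g},w}$ covers, via $\pi^*$, the $\pi_1^{et}(W',\pi(w))$-action on $\mon_{\ol{\M}_{g,1},\pi(w)}$, which follows since $\pi$ maps strata to strata and hence induces a map of \'etale fundamental groups. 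Your closing fallback to \cite[Section 6]{ACP} closes this gap in any case, and with that citation your argument collapses to the paper's proof.
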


\begin{proof}
Since  $\ol{\J}_{\mu,g}$ and $\ol{\M}_{g,n}$ is proper, then $\ol{\J}_{\mu,g}^{an}=\ol{\J}_{\mu,g}^\beth$ and $\ol{\M}_{g,n}^{an}=\ol{\M}_{g,b}^{\beth}$. The result follows just combining \cite[Proposition 6.1.8]{ACP} and Proposition \ref{prop:stratification}.
\end{proof}

\begin{Def}
\label{def:tropmap}
The \emph{tropicalization map} 
\[
\trop_{\ol{\J}_{\mu,g}}\col \overline{\J}_{\mu,g}^{an}\to \ol{J}^{\trop}_{\mu,g}
\] 
is defined as follows. Fix a point $\psi\col \Spec(R)\to \overline{\J}_{\mu,g}$ with $R$ complete and let $[(X,I)]$ be the image of the closed point. We get a ring homomorphism
\[
\psi^\#\col \widehat{\O}_{\overline{\J}_{\mu,g},[(X,I)]}\to R.
\]
Let $(\Gamma,\E,D)$ be the dual graph of $[(X,I)]$. Recall the notation in Equation \eqref{eq:Jloc}. We define a length function $\l\col E(\Gamma^\E)\to \ol{\R}_{>0}$ given by $\l(e)=\nu_R(\psi^\#(T_e))$ for $e\in E(\Gamma)\setminus \E$, while $\l(e')=\nu_R(\psi^\#(X_e))$ and $\l(e'')=\nu_R(\psi^\#(Y_e))$ for $e\in \E$, where $e'$ and $e''$ are the edges of $\Gamma^\E$ over $e\in E(\Gamma)$ corresponding to the source and target of $e$, respectively. Then, we define $\trop(\psi)=[(X,\D)]$, where $X$ is the tropical curve $(\Gamma^\E,\l)$ and $\D$ is the divisor on $X$ induced by $D$. 
\end{Def}

  The previous definition is similar to the tropicalization map $\trop_{\ol{\M}_{g,1}}\col\ol{\M}_{g,1}^{an}\to\ol{\M}_{g,1}^\trop$  in \cite[Lemma-Definition 2.4.1]{Viv13} and \cite[Section 1.1]{ACP}. As for $\trop_{\ol{M}_{g,1}}$, we have that $\trop_{\ol{\J}_{\mu,g}}$ does not depends on the choices of representative $\psi$ and local coordinates. This will be also consequence of Theorem \ref{thm:tropj}.

In the next proposition we compute the monodromy group $H_{\J_{(\Gamma,\E,D)}}$ of the stratum $\J_{(\Gamma,\E,D)}$ of the toroidal embedding $\J_{d,g,1}\subset \overline{\J}_{\mu,g}$.

\begin{Lem}
\label{lem:mon}
We have $H_{\J_{(\Gamma,\E,D)}}=\Aut(\Gamma,\E,D)$.
\end{Lem}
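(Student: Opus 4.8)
The plan is to transport to the Jacobian the monodromy computation for the stratum $\M_\Gamma$ of $\ol{\M}_{g,1}$ carried out in \cite[Section 6]{ACP}, using the local description in Equation \eqref{eq:Jloc} together with the quotient presentation of Proposition \ref{prop:stratades}. First I would make the monodromy sheaf explicit at a point $w\in \J_{(\Gamma,\E,D)}$. By Equation \eqref{eq:Jloc}, the components of the boundary $\ol{\J}_{\mu,g}\setminus \J_{d,g,1}$ through $w$ are cut out by $T_e=0$ for $e\in E(\Gamma)\setminus\E$ and by $X_e=0$ and $Y_e=0$ for $e\in\E$. Since $X_e$ and $Y_e$ are attached to the source and target of $e$, these components are in natural bijection with the edge set $E(\Gamma^\E)$, so that $\eff_{\ol{\J}_{\mu,g},w}\cong \mathbb{N}^{E(\Gamma^\E)}$ and $\mon_{\ol{\J}_{\mu,g},w}\cong \ze^{E(\Gamma^\E)}$; under this identification the extended cone $\ol{\sigma}_{\J_{(\Gamma,\E,D)}}=\Hom(\eff_{\ol{\J}_{\mu,g},w},\ol{\R}_{\geq0})$ is exactly $\ol{\sigma}_{(\Gamma,\E,D)}=\ol{\R}^{E(\Gamma^\E)}_{\geq0}$. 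Every $\sigma\in\Aut(\Gamma,\E,D)$ preserves $\E$ and $D$, hence induces an automorphism of $\Gamma^\E$ permuting $E(\Gamma^\E)$; this gives the natural action of $\Aut(\Gamma,\E,D)$ on $\mon_{\ol{\J}_{\mu,g},w}$ that I must match with the monodromy.

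Next I would invoke Proposition \ref{prop:stratades}, which presents $\J_{(\Gamma,\E,D)}$ as the quotient stack $[\widetilde{\J}_{(\Gamma,\E,D)}/\Aut(\Gamma,\E,D)]$ with $h$ the quotient map. Since $\widetilde{\J}_{(\Gamma,\E,D)}$ is a relative Jacobian of fixed multidegree over the irreducible base $\widetilde{\M}_\Gamma=\prod_v\M_{w(v),\val(v)+\l(v)}$ with irreducible fibres, it is irreducible, hence connected, and the quotient presentation yields an exact sequence of \'etale fundamental groups
\[
\pi_1^{\mathrm{et}}(\widetilde{\J}_{(\Gamma,\E,D)})\lra \pi_1^{\mathrm{et}}(\J_{(\Gamma,\E,D)})\lra \Aut(\Gamma,\E,D)\lra 1.
\]
The key point is then that $h^*\mon_{\ol{\J}_{\mu,g}}$ is the constant sheaf $\underline{\ze^{E(\Gamma^\E)}}$ on $\widetilde{\J}_{(\Gamma,\E,D)}$, so that $\pi_1^{\mathrm{et}}(\widetilde{\J}_{(\Gamma,\E,D)})$ acts trivially on the stalk. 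This holds because over $\widetilde{\M}_\Gamma$ the components of the universal curve are globally labelled by $V(\Gamma)$ and its nodes by $E(\Gamma)$ with distinguished branches, and passing to the fixed-multidegree relative Jacobian $\widetilde{\J}_{(\Gamma,\E,D)}$ rigidifies each of the boundary divisors $\{T_e=0\}$, $\{X_e=0\}$, $\{Y_e=0\}$ globally, making each generator of $\mon$ a globally defined section of $h^*\mon$.

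Granting this, the monodromy representation $\pi_1^{\mathrm{et}}(\J_{(\Gamma,\E,D)})\to \Aut(\mon_{\ol{\J}_{\mu,g},w})$ factors through the surjection onto $\Aut(\Gamma,\E,D)$ in the sequence above, and the induced map $\Aut(\Gamma,\E,D)\to \Aut(\mon_{\ol{\J}_{\mu,g},w})$ is precisely the permutation action on $E(\Gamma^\E)$ identified in the first step; since the surjection realizes the whole group, I would conclude $H_{\J_{(\Gamma,\E,D)}}=\Aut(\Gamma,\E,D)$, viewed through its action on the cone $\ol{\sigma}_{(\Gamma,\E,D)}$. The main obstacle I anticipate is the careful justification that the monodromy trivializes on the cover $\widetilde{\J}_{(\Gamma,\E,D)}$: one must check that the rigidification furnished by Proposition \ref{prop:stratades} genuinely globalizes the branch-indexed divisors $\{X_e=0\}$, $\{Y_e=0\}$ for $e\in\E$ (and not only the smoothing divisors $\{T_e=0\}$), that the identification $\mon_{\ol{\J}_{\mu,g},w}\cong\ze^{E(\Gamma^\E)}$ of Equation \eqref{eq:Jloc} is compatible with the $\Aut(\Gamma,\E,D)$-equivariant structure of $h$, and that this action is faithful on $E(\Gamma^\E)$ (which uses that the leg forces $v_0$ to be fixed). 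This is the exact analogue of the computation for $\M_\Gamma$ in \cite[Section 6]{ACP}, and I would import their argument through the local isomorphism of completed local rings provided by Lemma \ref{lem:openimm} and Equation \eqref{eq:Jloc}.
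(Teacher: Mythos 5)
Your proposal is correct and follows essentially the same route as the paper's own proof: both identify $\eff_{\ol{\J}_{\mu,g},w}\cong\mathbb{Z}_{\geq0}^{E(\Gamma^\E)}$ via Equation \eqref{eq:Jloc}, use Proposition \ref{prop:stratades} to present the stratum as $[\widetilde{\J}_{(\Gamma,\E,D)}/\Aut(\Gamma,\E,D)]$, argue that the pullback of $\mon_{\ol{\J}_{\mu,g}}$ to the connected cover $\widetilde{\J}_{(\Gamma,\E,D)}$ is constant because components and node branches are globally labelled over $\widetilde{\M}_\Gamma$, and conclude in the style of \cite[Proposition 7.2.1]{ACP}. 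The differences are minor: the paper additionally records the global boundary divisors $\Delta_{i,\mu}$ of $\ol{\J}_{\mu,g}$ via \cite{MV14} and Lemma \ref{lem:openimm} (which your purely local identification of the branches replaces), while you are more explicit than the paper about the exact sequence of \'etale fundamental groups and the faithfulness of the $\Aut(\Gamma,\E,D)$-action on $E(\Gamma^\E)$.
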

\begin{proof}
Let $\Gamma_0$ be the graph with a single vertex of weight $g-1$ and a single loop. For  $i=0,\dots,g-1$, let $\Gamma_i$ be the graph with a single edge connecting two vertices of weight $i$ and $g-i$, with the leg on the vertex with weight $i$. For $i=0,\dots,g-1$, let $D_i$ be the unique $(v_0,\mu)$-quasistable divisor on $\Gamma_i$ (the uniqueness is due to Proposition \ref{prop:tree}). By \cite[Theorem 3.2 and Corollary 3.3]{MV14} and the fact that $\ol{\J}_{\mu,g}\to \ol{\J}_{d,g}$ is an open immersion (see Lemma \ref{lem:openimm}), we have that the boundary of $\ol{\J}_{\mu,g}$ is given by the divisors $\Delta_{i,\mu}:=\overline{\J}_{\Gamma_i,\emptyset, D_i}$ for $i=0,\ldots, g-1$. (The difference with the description of the boundary of $\ol{\J}_{d,g}$ is that, using the notations of \cite[Corollary 3.3]{MV14}, the quasistability condition in  $\ol{\J}_{\mu,g}$ selects just one between $\ol{\delta}_{i}^1$ and $\ol{\delta}_{i}^2$.) \par
   We now follow essentially the same argument of \cite[Proposition 7.2.1]{ACP}. Let $w$ be a point in $\J_{(\Gamma,\E,D)}$. The set $E(\Gamma^\E)$ forms a group basis for $\mon_{\ol{\J}_{\mu,g},w}$ and a monoid basis for $\eff_{\ol{\J}_{\mu,g},w}$. Moreover, the locally constant sheaf of sets on $\J_{(\Gamma,\E,D)}$ whose stalk at every point is the set of edges $E(\Gamma^\E)$, is trivial when pulled back to $\widetilde{\J}_{(\Gamma,\E,D)}$. Hence the pull back of $\mon_{\ol{\J}_{\mu,g}}$ and $\eff_{\ol{\J}_{\mu,g}}$ to $\widetilde{\J}_{(\Gamma,\E,D)}$ are trivial. It follows from Proposition \ref{prop:stratades} that $\J_{(\Gamma,\E,D)}=[\widetilde{\J}_{(\Gamma,\E,D)}/\Aut(\Gamma,\E,D)]$, hence  for $w\in \J_{(\Gamma,\E,D)}$, the action of $\pi_1^{et}(\J_{(\Gamma,\E,D)},w)$ on $\text{Mon}_{\J_{\mu,g},w}$ factors through its quotient $\Aut(\Gamma,\E,D)$.
\end{proof}
	
We are ready to prove the result on the tropicalization of the Esteves' universal compactified Jacobian. 
	
\begin{Thm}
\label{thm:tropj}
There is an isomorphism of extended generalized cone complexes
\[
\Phi_{\ol{\J}_{\mu,g}}\col \ol{\Sigma}(\overline{\J}_{\mu,g})\to \ol{J}^{\trop}_{\mu,g}.
\]
Moreover, the following diagram is commutative
\begin{eqnarray*}
\SelectTips{cm}{11}
\begin{xy} <16pt,0pt>:
\xymatrix{
\ol{\J}_{\mu,g}^{an} \ar@/^2pc/[rr]^{\trop_{\ol{\J}_{\mu,g}}} \ar[d]_{\pi^{an}} \ar[r]^{{\bf p}_{\ol{\J}_{\mu,g}}\;}   
  & \ar[r]^{{\Phi}_{\ol{\J}_{\mu,g}}} \ar[d]_{\ol{\Sigma}(\pi)} \ol{\Sigma}(\ol{\J}_{g,n}) & \ol{J}_{\mu,g}^{trop}   \ar[d]_{\pi^{trop}} \\
\ol{\M}_{g,1}^{an}  \ar@/_2pc/[rr]^{\trop_{\ol{\M}_{g,1}}} \ar[r]^{{\bf p}_{\ol{ {\M}}_{g,1}}\;}            &   \ar[r]^{{\Phi}_{\ol{\M}_{g,1}}}         \ol{\Sigma}(\ol{{\M}}_{g,1})  & \ol{M}_{g,1}^{trop}  
 }
\end{xy}
\end{eqnarray*}
\end{Thm}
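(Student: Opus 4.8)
The plan is to build $\Phi_{\ol{\J}_{\mu,g}}$ cone by cone, mirroring the proof for $\ol{\M}_{g,n}$ in \cite[Section 6]{ACP}. By Proposition \ref{prop:stratification} the toroidal embedding $\J_{d,g,1}\subset \ol{\J}_{\mu,g}$ has its strata indexed by the poset $\mathcal{QD}_{\mu,g}$, so the skeleton $\ol{\Sigma}(\ol{\J}_{\mu,g})$ is the colimit of the extended cones $\ol{\sigma}_{\J_{(\Gamma,\E,D)}}=\Hom_{\text{monoids}}(\eff_{\ol{\J}_{\mu,g},w},\ol{\R}_{\geq0})$ over the strata $W=\J_{(\Gamma,\E,D)}$. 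The first step is to read off the monoid $\eff_{\ol{\J}_{\mu,g},w}$ from the local model of Equation \eqref{eq:Jloc}: the boundary is cut out locally by $T_e=0$ for $e\in E(\Gamma)\setminus\E$ and by the two branches $X_e=0$, $Y_e=0$ of the node attached to each $e\in\E$ where $I$ fails to be locally free. Hence $\eff_{\ol{\J}_{\mu,g},w}$ is freely generated as a monoid by $E(\Gamma^\E)$, where $e\in E(\Gamma)\setminus\E$ contributes $T_e$ and each $e\in\E$ contributes the two edges $e',e''$ of $\Gamma^\E$ over $e$. This yields a canonical identification $\ol{\sigma}_{\J_{(\Gamma,\E,D)}}\cong \ol{\R}_{\geq0}^{E(\Gamma^\E)}=\ol{\sigma}_{(\Gamma,\E,D)}$, exactly the cone attached to $(\Gamma,\E,D)$ in $\ol{J}^{\trop}_{\mu,g}$.

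Next I would check that these identifications match all arrows in the two colimit diagrams. The face arrows $\ol{\sigma}_W\to\ol{\sigma}_{W'}$ of the skeleton come from inclusions $W'\subset\ol{W}$, which by the equivalences of Proposition \ref{prop:stratification} are precisely the specializations $(\Gamma,\E,D)\geq(\Gamma',\E',D')$ in $\mathcal{QD}_{\mu,g}$; under the identification above these are the face morphisms $\sigma_{(\Gamma',\E',D')}\to\sigma_{(\Gamma,\E,D)}$ built into $\ol{J}^{\trop}_{\mu,g}$. The self-arrows attached to the monodromy group $H_{\J_{(\Gamma,\E,D)}}$ are identified, by Lemma \ref{lem:mon}, with the action of $\Aut(\Gamma,\E,D)$, which is exactly the automorphism group by which we quotient in the colimit defining $\ol{J}^{\trop}_{\mu,g}$. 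Thus the two colimits run over isomorphic diagrams, and $\Phi_{\ol{\J}_{\mu,g}}$ is an isomorphism of extended generalized cone complexes.

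For the commutative diagram, the bottom row is the corresponding statement for $\ol{\M}_{g,1}$ from \cite[Section 6]{ACP}, and the outer left square is exactly Proposition \ref{prop:funct}. For the right square I would use that $\pi$ is given locally by $\pi^\#(T_e)=T_e$ and $\pi^\#(W_i)=W_i$ (the discussion after Equation \eqref{eq:Mloc}): on cones this sends the generator $T_e$ of $\ol{\sigma}_{(\Gamma,\E,D)}$ to $T_e$ and collapses the two exceptional edges $e',e''$ over $e\in\E$ to $e$, i.e.\ it is precisely the map $\ol{\pi}^\trop$ of Equation \eqref{eq:hat}. Finally, the composite $\Phi_{\ol{\J}_{\mu,g}}\circ{\bf p}_{\ol{\J}_{\mu,g}}=\trop_{\ol{\J}_{\mu,g}}$ follows by unwinding definitions: for $\psi\col\Spec R\to\ol{\J}_{\mu,g}$, the retraction records $\nu_R\circ\psi^\#\circ\epsilon$ on the basis $E(\Gamma^\E)$, whose values are exactly the lengths $\l(e)=\nu_R(\psi^\#(T_e))$, $\l(e')=\nu_R(\psi^\#(X_e))$, $\l(e'')=\nu_R(\psi^\#(Y_e))$ assigned in Definition \ref{def:tropmap}. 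This also shows $\trop_{\ol{\J}_{\mu,g}}$ is independent of the choices of representative and local coordinates.

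The main obstacle I anticipate is the first step: correctly extracting the monoid $\eff_{\ol{\J}_{\mu,g},w}$ from the mixed local model of Equation \eqref{eq:Jloc}, and in particular verifying that a non-locally-free node attached to $e\in\E$ contributes two independent boundary branches (matching the two edges of $\Gamma^\E$ over $e$) rather than the single smoothing parameter $T_e$ arising for $e\notin\E$. Once this toroidal local structure and the monodromy computation of Lemma \ref{lem:mon} are in place, the identification of the two cone complexes and the commutativity of the diagram are formal and follow the \cite{ACP} template.
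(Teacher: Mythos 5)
Your proposal is correct and follows essentially the same route as the paper's proof: identifying each $\eff_{\ol{\J}_{\mu,g},w}$ with $\mathbb{Z}_{\geq 0}^{E(\Gamma^\E)}$ via the local model of Equation \eqref{eq:Jloc} (your "main obstacle" is exactly the step the paper reads off from that equation, including the two branches $X_e=0$, $Y_e=0$ for $e\in\E$), matching face arrows with specializations via Proposition \ref{prop:stratification} and monodromy with $\Aut(\Gamma,\E,D)$ via Lemma \ref{lem:mon}, invoking Proposition \ref{prop:funct} for the left square, and using $T_e=X_eY_e$ (equivalently, the map of Equation \eqref{eq:hat}) for the compatibility with $\pi$. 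The only cosmetic difference is that you verify the inner right square of cone complexes directly, whereas the paper checks the outer rectangle $\trop_{\ol{\M}_{g,1}}\circ\pi^{an}=\pi^{\trop}\circ\trop_{\ol{\J}_{\mu,g}}$ by the same valuation computation.
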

\begin{proof}
  Let $w$ be a point in a stratum $W=\J_{(\Gamma,\E,D)}$ of $\ol{\J}_{\mu,g}$. Then there is an isomorphism of monoids $\eff_{\ol{\J}_{\mu,g},w}\to\mathbb{Z}_{\geq0}^{E(\Gamma^\E)}$ via Equation \eqref{eq:Jloc}, hence $\ol{\sigma}_W$ is isomorphic to $\ol{\sigma}_{(\Gamma,\E,D)}$. By \cite[Proposition 6.2.6]{ACP} and Lemma \ref{lem:mon}, we have 
\[
\ol{\Sigma}(\overline{\J}_{\mu,g})=\coprod_{(\Gamma,\E,D)} \ol{\sigma}^\circ_W/H_W\cong\coprod_{(\Gamma,\E,D)}\ol{\sigma}^\circ_{(\Gamma,\E,D)}/\Aut(\Gamma,\E,D).
\]
Moreover, by Proposition \ref{prop:stratification}, we have that $(\Gamma,\E,D)\geq (\Gamma',\E',D')$ in $\mathcal{QD}_{\mu,g}$ if and only if  $\J_{\Gamma,\E,D}\subset \overline{\J}_{\Gamma',\E',D'}$ hence the extended generalized cone complex  $\ol{\Sigma}(\J_{\mu,g})$ is isomorphic to $\ol{J}^\trop_{\mu,g}$.\par
    Let $\psi\col \Spec(R)\to \ol{\J}_{\mu,g}$ be a point in $\ol{\J}_{\mu,g}^{an}$, with $R$ complete, such that the image $w$ of the closed point in $\Spec(R)$ lies in the stratum $\J_{(\Gamma,\E,D)}$ of $\overline{\J}_{\mu,g}$. The set $E(\Gamma^\E)$ can be seen as a monoid basis of the free monoid $\eff_{\ol{\J}_{\mu,g},w}$. If $\trop(\psi)=[(X,\D)]\in\ol{J}_{\mu,g}$ and $\l\col E(\Gamma^\E)\to \ol{\R}_{>0}$ is the length function of $X$, then $\l$ factors through the composition
\[
\l\col E(\Gamma^\E)\to \eff_{\ol{\J}_{\mu,g},w}\stackrel{{\bf p}_{\ol{\J}_{\mu,g}}(\psi)}{\lra} \ol{\R}_{\geq0}.
\]
It follows that $\trop_{\ol{\J}_{\mu,g}}=\Phi_{\ol{\J}_{\mu,g}}\circ{\bf p}_{\ol{\J}_{\mu,g}}$.

The fact that the square in the left hand side of the diagram in the statement is commutative follows from Proposition \ref{prop:funct}. 

Finally, thanks to Equations \eqref{eq:Jloc} and \eqref{eq:Mloc}, we have that $\trop_{\ol{\M}_{g,1}}\circ\pi^{an}(\psi)$ is the tropical curve $(\Gamma,\l')$ where $\l'(e)=\nu_R(\psi^\#T_{e})$. On the other hand, $\pi^\trop\circ \trop_{\ol{\J}_{\mu,g}}(\psi)$ is the tropical curve $(\Gamma^\E,\l)$ where $\l$ is as in Definition \ref{def:tropmap}. Since $\nu_R(\psi^\#T_e)=\nu_R(\psi^\#X_e)+\nu_R(\psi^\#Y_e)$ by Equation \eqref{eq:Jloc}, we deduce that $\l'(e)=\l(e')+\l(e'')$ if $e\in \E$ and $e',e''$ are the edges of $\Gamma^\E$ over $e$, otherwise $\l'(e)=\l(e)$ if $e\notin \E$. Hence the tropical curves $(\Gamma,\l')$ and $(\Gamma^\E,\l)$ are isomorphic, and so
\[
\trop_{\ol{\M}_{g,1}}\circ\pi^{an}=\pi^\trop\circ \trop_{\ol{\J}_{\mu,g}}.
\]
This concludes the proof.
\end{proof}

\section*{Acknowledgments}	

We thank Lucia Caporaso, Renzo Cavalieri, Eduardo Esteves, Margarida Melo, Dhruv Ranganathan, and Martin Ulirsch for useful conversations and comments. The second author was supported by CNPq, processo 301314/2016-0.  We thank the referees for carefully reading the paper and for the constructive suggestions.

\bigskip
\noindent{\smallsc Alex Abreu and Marco Pacini, Universidade Federal Fluminense,
\\ 
Rua Prof. M. W. de Freitas, S/N, Istituto de Matem\'atica. Niter\'oi, Rio de Janeiro, Brazil. 24210-201. }\\
{\smallsl E-mail addresses: \small\verb?alexbra1@gmail.com?  \;\;  and \;\;   \small\verb?pacini.uff@gmail.com?}

\end{document}